\renewcommand{\a}{\alpha}
\renewcommand{\b}{\beta}
\newcommand{\g}{\gamma}
\newcommand{\G}{\Gamma}
\renewcommand{\d}{\delta}
\newcommand{\D}{\Delta}
\renewcommand{\L}{\Lambda}
\newcommand{\m}{\mu}
\newcommand{\n}{\nu}
\renewcommand{\o}{\omega}
\renewcommand{\O}{\Omega}
\newcommand{\s}{\sigma}
\renewcommand{\t}{\tau}
\newcommand{\e}{\varepsilon}
\newcommand{\f}{\varphi}
\newcommand{\F}{\Phi}
\newcommand{\x}{\xi}
\newcommand{\Ac}{{\mathcal A}}
\newcommand{\E}{{\mathbb E}}
\newcommand{\N}{{\mathbb N}}
\newcommand{\R}{{\mathbb R}}
\newcommand{\T}{{\mathbb T}}
\newcommand{\Z}{{\mathbb Z}}
\newcommand{\Fc}{{\mathcal F}}
\newcommand{\Ic}{{\mathcal I}}
\newcommand{\Gc}{{\mathcal G}}
\renewcommand{\Pr}{{\mathbb P}}
\newcommand{\Aff}{{\rm Aff}}
\newcommand{\Hor}{{\rm Hor}}
\newcommand{\DA}{{\rm DA}}
\newcommand{\Aut}{{\rm Aut}}
\newcommand{\TSP}{{\rm TSP}}
\newcommand{\supp}{{\rm supp}}
\newcommand{\proj}{{\sf proj}}
\newtheorem{theorem}{Theorem}[section]
\newtheorem{proposition}[theorem]{Proposition}
\newtheorem{lemma}[theorem]{Lemma}
\newtheorem{fact}[theorem]{Fact}
\theoremstyle{definition}
\newtheorem{remark}[theorem]{Remark}
\theoremstyle{remark}
\begin{document}

\title{Random walks on the discrete affine group}
\author{J\'er\'emie Brieussel, Ryokichi Tanaka and Tianyi Zheng}
\date{\today}

\maketitle

\begin{abstract}
We introduce the discrete affine group of a regular tree as a finitely generated subgroup of the affine group. We describe the Poisson boundary of random walks on it as a space of configurations. We compute isoperimetric profile and Hilbert compression exponent of the group. We also discuss metric relationship with some lamplighter groups and lamplighter graphs.
\end{abstract}

\section{Introduction}

The affine group $\Aff(\T)$ of an infinite $q+1$-regular tree $\T$ is the group of automorphisms that fix a given end. Random walks with respect to spread-out measures on this locally compact group have been extensively studied by Cartwright, Kaimanovich and Woess \cite{CKW} and Brofferio \cite{BroRenewal}. In particular, they identified the Poisson boundary with the end boundary of the tree, endowed with a limiting distribution.
The affine group of the tree contains many interesting finitely generated subgroups, such as solvable Baumslag-Solitar groups, lamplighter groups over the integers and of course any group acting on a rooted tree, e.g.\ automata groups. In fact, the affine tree can be pictured as a rooted tree where the root has been sent to the boundary.

In this view point, we consider a $q+1$-regular tree with a distinguished infinite geodesic ray from a vertex to an end. If we remove the chosen vertex, we are left with $q+1$ subtrees which are all isomorphic. Let us fix some canonical identification between these subtrees. We define the {\it discrete affine group}  $\DA(\T)$ as the subgroup of $\Aff(\T)$ generated by  all permutations of the $q$ subtrees not containing the ray together with the ``shift'' along the ray --- see Section \ref{dag} for detailed definition. This group resembles  the ``mother'' automata groups  which are generated by rooted permutations together with other ``spreading action'' generators \cite{AmirViragDeg3}.

The present paper studies basic algebraic properties of the group $\DA(\T)$, the metric characteristics and random walks on it. The main result is a description of its Poisson boundary  with respect to any finitely supported probability measure.


The Poisson boundary of a group endowed with a probability measure is a measure space giving description of the bounded harmonic functions via a Poisson formula \cite{KV}. When the Poisson boundary is trivial (i.e.\ a point), then all bounded harmonic functions are constant, which is called the Liouville property. An abstract description of the Poisson boundary is always available. However giving explicit descriptions for various classical groups has been an extensive subject of study over the last decades.

As Kaimanovich has pointed out in \cite[Introduction]{KaiF}, known examples of explicit non-trivial Poisson boundary essentially fall into two categories: either they are geometric boundaries related to hyperbolicity or they are spaces of configuration obtained by stabilization along some sample path. 

In the first category, natural geometric boundaries are identified with 
the Poisson boundaries of,
among others, word hyperbolic groups, some Lie groups and their lattices \cite{Khyp},
where Kaimanovich's Ray and Strip criteria can be applied.
These criteria unify many descriptions studied before in (among others) \cite{Furst}, \cite{Led}, \cite{Der} as well as \cite{K} for affine groups over the reals and \cite{CKW} for affine group over local fields and $\Aff(\T)$. 
They also apply to some dense subgroups in Lie groups \cite{BroAff}, \cite{BroSch}.

In the second category, key examples are lamplighter groups. The space of functions from the base group to the lamp group (space of configurations) is known since \cite{KV} to be a non-trivial quotient of the Poisson boundary, provided the random walk on the base group is transient. Kaimanovich and Vershik conjectured that this space of configurations would be the entire Poisson boundary. This conjecture was first proved true when the base group is free of rank at least 2 by Karlsson and Woess \cite{KW}, and when it is abelian of rank at least 5 by Erschler \cite{Ers}, both using Strip criterion. The conjecture was finally proved true in general by Lyons and Peres \cite{LP}. 
For this purpose, they strengthened Kaimanovich's criteria and established a stronger criterion which we call the {\it Trap criterion}, stated below as Theorem \ref{Trap}.

For many groups defined in terms of their action, one can find a space of configurations which stabilize along the sample path to ensure non-triviality of the Poisson boundary. This is the case for some groups of intermediate growth \cite{ErschlerSubexp}, for higher degree ($d\ge 3$) mother automata groups \cite{AmirViragDeg3}, for higher rank groups of interval exchange maps \cite{JMBMdlS} and for Thompson's group $F$ \cite{KaiF}. This type of quotient of the Poisson boundary is sometimes referred to as ``the lamplighter boundary", see \cite[Remark 5]{ErschlerSubexp}. In the aforementioned examples it is an open question what is a full description of the Poisson boundary, see for instance 
\cite[Question 5.8]{JMBMdlS}.

For the discrete affine group, we give a complete description of the Poisson boundary in terms of a space of configurations. 

\begin{theorem}\label{main}
For any finitely supported non-degenerate probability measure $\m$, the Poisson boundary of $(\DA(\T), \m)$ is the space of functions from $\T$ to $\mathfrak{S}_q$, endowed with the corresponding harmonic measures.
\end{theorem}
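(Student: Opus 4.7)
The plan is to realize elements of $\DA(\T)$ as pairs $(k,f)$, with $k\in\Z$ the shift along the chosen end and $f:\T\to\Sf_q$ a finitely supported configuration of permutations; multiplication is twisted by the shift acting on the configuration, giving a wreath-like structure over the tree. The strategy is, first, to produce a configuration-valued $\m$-boundary by stabilization of the sample path and, second, to show via the Trap criterion (Theorem \ref{Trap}) that this boundary is already the full Poisson boundary.

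First I would analyze the projection $\pi:\DA(\T)\to\Z$ onto the shift. For any finitely supported non-degenerate $\m$, the pushed-forward walk $Y_n=\pi(X_n)$ is a generating walk on $\Z$: if it is recurrent then the Poisson boundary is trivial, in agreement with the statement endowed with Dirac harmonic measures; otherwise it has drift $\ell\ne 0$, which we may take to be $\ell>0$. Each generator modifies only vertices lying in a fixed bounded neighbourhood of the current shift level, and the law of large numbers pushes that neighbourhood past any given vertex $v\in\T$ for good; consequently the value $f_n(v)$ of the configuration of $X_n$ at $v$ is almost surely eventually constant. This defines an a.s.\ limit $\xi_\infty:\T\to\Sf_q$ whose law $\nu$ on $\Sf_q^\T$ is a $\m$-boundary.

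To promote it to the full Poisson boundary I would apply the Trap criterion. For each configuration $\xi:\T\to\Sf_q$ set
\[
\Tc_n(\xi) \;=\; \bigl\{(k,f)\in\DA(\T)\,:\, k\in I_n,\ f\equiv\xi \text{ on } B(v_{\lfloor\ell n\rfloor},r_n)\bigr\},
\]
where $v_j$ is the vertex on the distinguished ray at level $j$, $I_n\subset\Z$ is a window of sublinear size centred at $\ell n$, and $r_n=o(n)$ is a radius large enough to contain every vertex the walker will ever touch from time $n$ onwards. Two conditions must be checked: (i) $X_n\in\Tc_n(\xi_\infty)$ for all large $n$ almost surely, following from the law of large numbers for $Y_n$ together with the stabilization established above; and (ii) the small-trap condition prescribed by Theorem~\ref{Trap}, which amounts to counting group elements with prescribed shift and with configuration matching $\xi$ on a prescribed ball.

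The main obstacle is the quantitative calibration of $I_n$ and especially $r_n$: the window and the radius must be small enough that the "free" configuration coordinates outside the trapped ball carry enough entropy to match that of $\m^{*n}$, yet $r_n$ must dominate the maximal transverse excursion of the walker away from the shift ray, calling for a Kesten-type fluctuation bound in $\T$ parallel to the law of large numbers for $Y_n$. Reconciling the exponential growth of free configurations at depth $r_n$ (of order $q^{r_n}$ per vertex) with the one-dimensional drift of the shift, so that the trap genuinely traps yet remains small, is the delicate point where the specific geometry of $\DA(\T)$ really enters, and I expect the bulk of the work to reside in establishing these deviation bounds and the matching entropy counts.
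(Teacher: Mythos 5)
Your outline reproduces the paper's general strategy (stabilized configurations plus the Lyons--Peres Trap criterion), but it has two genuine gaps. First, the dichotomy you set up at the start is wrong: when the projected walk on $\Z$ is recurrent ($\E_{\check\m}\F=0$) the Poisson boundary is \emph{not} trivial, and asserting so contradicts the very statement you are proving. Even with zero drift, the inverted orbit $o.\check w_n$ is transient on $\T$ and converges to the end $\o$ (Proposition \ref{CKWThm2}), because the walk induced on the Schreier graph of the action is transient (Lemma \ref{transience}); the portrait still stabilizes pointwise, and the limit configuration is a.s.\ infinitely supported (Lemma \ref{accumulation}), so the harmonic measure is not a Dirac mass. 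Your stabilization mechanism relies solely on the law of large numbers for the shift pushing the modification region past every fixed vertex, which is unavailable in the driftless case; the bulk of the paper's work — Lemma \ref{mainlemma}, a uniform lower bound on the probability that after crossing horocycle level $\ell$ the inverted orbit never re-enters the subtree $T_\ell$ it leaves behind — is exactly what replaces drift there. By discarding this case you prove at best a weaker statement for drifted measures.

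Second, the traps as you define them do not satisfy the criterion. The set $\Tc_n(\xi)$ constrains the configuration only on a ball $B(v_{\lfloor\ell n\rfloor},r_n)$, so it contains elements with arbitrary finitely supported configurations outside that ball: it is an infinite subset of $\DA(\T)$ and cannot meet the requirement $|D_\ell|\le e^{\e\ell}$ of Theorem \ref{Trap}; the ``entropy matching'' you propose is not what the criterion asks for. The paper's traps prescribe the configuration \emph{everywhere} off a ball of fixed radius $R$ around the crossing point: equal to $\g_\infty$ on the side already abandoned by the inverted orbit and trivial on the not-yet-visited side, which makes $|D_\ell|$ bounded by a constant; that the walk satisfies this prescription at the crossing time with probability bounded below is precisely Lemma \ref{mainlemma}, evaluated at the stopping time $\tau_{c_1\ell}$, and no Kesten-type fluctuation bound or entropy count is needed (one only uses $\tau_{c_1\ell}\ge c_1\ell/R$ from the bounded range of $\m$). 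Finally, centring the trap on the distinguished ray is incorrect in the case $\E_{\check\m}\F>0$, where $o.\check w_n$ converges to a \emph{random} end in $\partial\T\setminus\{\o\}$: the trap must sit on the random geodesic toward that end, and for condition (i) of the criterion you must show this geodesic is measurable with respect to $\s(\xi_\infty)$. The paper secures this by proving the limit end is a.s.\ the unique accumulation point of $\supp\g_\infty$ (Lemma \ref{accumulation}); this measurability step is missing from your proposal.
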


This description is in strong contrast with spread-out probability measures on the whole affine group $\Aff(\T)$, for which the Poisson boundary is the geometric boundary of the tree \cite{CKW}.

To briefly describe the origin of these configurations, observe that any element in the affine group can be described by an integer determining the action on horocycles (the vertical direction) together with a map from the vertex set of the tree to the permutation group $\mathfrak{S}_q$, called the portrait --- see Section \ref{afftree}. The discrete affine group consists precisely of the elements with portrait non-trivial only at finitely many vertices. 

Along a sample path of the random walk, the portraits stabilize pointwise to give a function from $\T$ to $\mathfrak{S}_q$. The configuration space consists of such portraits $\T\to \mathfrak{S}_q$. One key difference between this configuration space of $\DA(\T)$ and the lamp configuration space of a wreath product is that for $\DA(\T)$ the configurations are supported on a Schreier graph of the group, instead of the Cayley graph of the base group. Therefore we are lead to study the inverted orbits on $\T$. The reason for stabilization of configurations is that along the trajectory, the portrait is modified only at bounded distance from the inverted orbit of our chosen vertex, which is transient. This is similar to \cite{JMBMdlS} and \cite{KaiF}. The key point to get Theorem \ref{main} is a clear description of the inverted orbits --- see Section \ref{subsec:wordmetric}. The discrete affine group seems to provide the first non-trivial example of an action with well-understood inverted orbits, other than actions of groups on their Cayley graphs. 

It is interesting to compare the group $\DA(\T)$ with lamplighter groups. For instance, both admit a word metric described by a traveling salesman problem --- Proposition \ref{metric}. The group $\DA(\T)$ also resembles the lamplighter {\it graph} on $\T$ with its affine structure introduced by Sava in her thesis \cite{SavaThesis}, \cite{Sava}. The difference is that the moves on $\T$ are somewhat twisted by the action of $\DA(\T)$ on $\T$. 
From a quasi-isometric perspective, we can show that the discrete affine group embeds bi-Lipschitz into Sava's lamplighter graph --- see Section \ref{emb}. We do not know whether they are quasi-isometric.

The same argument shows that $\DA(\T)$ admits a bi-Lipschitz embedding into the wreath product between the free product of $q+1$ copies of $\Z_2$ (whose Cayley graph is a $q+1$-regular tree) and the permutation group $\mathfrak{S}_q$. 
Together with general results of Cornulier, Stadler and Valette \cite{CSV}, and Naor and Peres \cite{NPemb}, \cite{NPLp}, this implies:

\begin{proposition}\label{L_Pcomp} The group $\DA(\T)$ admits a bi-Lipschitz embedding into $L_1$, and for all $p \geq 1$, the $L_p$-compression exponent of the discrete affine group is:
\[
\a_p^\ast \left(\DA(\T)\right)=\max \left\{\frac{1}{p}, \frac{1}{2}\right\}.
\]
\end{proposition}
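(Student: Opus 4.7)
The plan is to transport compression bounds across the bi-Lipschitz embedding $\DA(\T) \hookrightarrow W := \mathfrak{S}_q \wr (\ast_{q+1}\Z_2)$ built in Section~\ref{emb}. The Cayley graph of the base group $\ast_{q+1}\Z_2$ is the $(q+1)$-regular tree $\T$, which embeds isometrically into $L_1$ via the classical edge-based indicator construction. Since the lamp $\mathfrak{S}_q$ is finite, the Cornulier-Stadler-Valette construction \cite{CSV} promotes this to a bi-Lipschitz embedding $W \hookrightarrow L_1$. Composing with Section~\ref{emb} yields the bi-Lipschitz embedding $\DA(\T) \hookrightarrow L_1$, which proves the first assertion and in particular gives $\a_1^\ast(\DA(\T)) = 1$.

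\textbf{Compression exponent.} For $p > 1$, the same embedding gives the lower bound $\a_p^\ast(\DA(\T)) \geq \a_p^\ast(W) = \max\{1/p, 1/2\}$, the last equality for wreath products with finite lamp over a virtually free base being a consequence of \cite{NPemb, NPLp, CSV}. For the matching upper bound, the TSP description of the word metric on $\DA(\T)$ (Proposition~\ref{metric}) together with the inverted-orbit control developed in Section~\ref{subsec:wordmetric} shows that the simple random walk on $\DA(\T)$ has the same speed and entropy asymptotics as a lamplighter walk on $\T$. Feeding these diffusive asymptotics into the Naor-Peres Markov-type estimates \cite{NPLp} yields $\a_p^\ast(\DA(\T)) \leq \max\{1/p, 1/2\}$ for every $p \geq 1$.

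\textbf{Subgroup alternative and main obstacle.} For $q \geq 3$ a cleaner route to the upper bound is to exhibit an undistorted copy of the lamplighter $\mathfrak{S}_{q-1} \wr \Z$ inside $\DA(\T)$: take the shift $\s$ together with root-permutations $\tilde\pi$ which additionally fix the subtree containing the backward extension of the chosen ray, so that the conjugates $\s^n \tilde\pi \s^{-n}$ act on pairwise disjoint subtrees at the different levels and genuinely commute. Proposition~\ref{metric} then reduces the TSP on $\T$ for such elements to one-dimensional TSP along the shift-orbit, giving undistortedness; combined with $\a_p^\ast(\mathfrak{S}_{q-1}\wr\Z) = \max\{1/p, 1/2\}$ from \cite{NPLp} and the fact that subgroups carry at least the compression of the ambient group, this yields the upper bound. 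The main obstacle is the case $q = 2$, where this subgroup degenerates to a copy of $\Z$ and one must fall back on the random-walk / Markov-type argument above; the technical heart is then the extraction of the correct speed and entropy asymptotics of the $\DA(\T)$-walk from Proposition~\ref{metric} and the inverted-orbit picture.
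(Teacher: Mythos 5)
Your first half is the paper's own route: the isometric embedding of $\DA(\T)$ into $\mathfrak{S}_q\wr F_{q+1}$ with the switch-walk-switch metric (Lemma \ref{biLip}), the Cornulier--Stalder--Valette bi-Lipschitz embedding of that wreath product into $L_1$ \cite{CSV}, and then the general promotion $\alpha_p^\ast \ge \max\{1/p,1/2\}\,\alpha_1^\ast$ from \cite{NPLp}; this part is fine (though be careful: only the inequality $\alpha_p^\ast(W)\ge\max\{1/p,1/2\}$ is available for the non-amenable $W=\mathfrak{S}_q\wr F_{q+1}$ --- the equality you assert is not in the cited references and is not needed). The genuine gap is in your upper bound. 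The Naor--Peres inequality you want to feed the walk into is $\alpha_p^\ast(G)\le \max\{1/p,1/2\}/\beta^\ast(G)$, and it holds for \emph{amenable} $G$ (amenability of $\DA(\T)$, from Proposition \ref{gdeal}, must be invoked; for non-amenable groups the inequality is false, e.g.\ free groups). With that inequality, \emph{diffusive} asymptotics, $\beta^\ast=1/2$, would give $\alpha_p^\ast\le 2\max\{1/p,1/2\}\ge 1$, which is vacuous: what you need is exactly ballisticity, $\beta^\ast(\DA(\T))=1$, and your description of the walk as diffusive contradicts your own comparison with a lamplighter over the transient tree $\T$ (whose walk has linear speed). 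Moreover you never actually establish the claimed speed/entropy asymptotics. The paper gets linear speed for free: the Poisson boundary is non-trivial for any finitely supported non-degenerate measure (Theorem \ref{Poisson=config}), hence the simple random walk has linear rate of escape, so $\beta^\ast=1$ and the Naor--Peres bound closes the argument. If you prefer to avoid the boundary theorem, you must prove linear speed directly, e.g.\ via Proposition \ref{metric} plus transience of the inverted orbit (the support of the portrait of $w_n$ grows linearly in expectation), but that step has to be written, not asserted.

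Your subgroup alternative is a genuinely different and viable route, and in fact it is better than you think. The conjugates $\alpha^n\d_o^\s\alpha^{-n}$ with $\s$ fixing the letter $0$ have portraits supported on distinct vertices of the bi-infinite ray and act on pairwise disjoint subtrees, so they commute and $\langle \alpha,\ \d_o^\s:\s(0)=0\rangle\cong\mathfrak{S}_{q-1}\wr\Z$; Proposition \ref{metric} reduces its word length to the one-dimensional TSP along the ray, so the copy is undistorted, and then $\alpha_p^\ast(\DA(\T))\le\alpha_p^\ast(\mathfrak{S}_{q-1}\wr\Z)=\max\{1/p,1/2\}$ by \cite{NPLp} and monotonicity of compression under bi-Lipschitzly embedded subgroups. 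But the $q=2$ ``obstacle'' is illusory: instead of a root permutation fixing the ray direction, put the lamp at the off-ray child, i.e.\ take $\langle\alpha,\ \d_{o1}^{\mathfrak{S}_q}\rangle$. Since $o1$ is not on the ray, all conjugates $(\d_{o1}^\s)^{\alpha^n}$ are supported at the ``$1$-children'' of ray vertices, again with disjoint subtrees, giving an undistorted copy of $\mathfrak{S}_q\wr\Z$ for every $q\ge 2$ (supports lie within distance $1$ of the ray, so the TSP is again comparable to the $\Z$-lamplighter length). With that modification your subgroup argument covers all cases and you never need the random-walk estimate; as written, however, the $q=2$ case falls back on the flawed ``diffusive'' argument, so the proof as submitted has a genuine hole there.
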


The $L_p$-compression exponent is defined in Section \ref{compexp}. The $L_1$-isoperimetric profile and the return probability, see Section \ref{return-profile} for definitions, are two other invariants of quasi-isometry that can be computed for $\DA(\T)$:

\begin{proposition}\label{iso-return}
Let $\m$ be a finitely supported non-degenerate symmetric probability measure on $\DA(\T)$.
Then the $L_1$-isoperimetric profile satisfies
\begin{equation*}
\L_{1, \DA(\T), \m}(v) \simeq \frac{1}{\log \log v},
\end{equation*}
and the return probability
\begin{equation*}
\m^{(2n)}(e) \simeq e^{-n/(\log n)^2}.
\end{equation*}
\end{proposition}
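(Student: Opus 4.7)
The plan is to establish the sharp two-sided bound $\L_{1,\DA(\T),\m}(v)\simeq 1/\log\log v$ first, and then convert it into the return-probability estimate via the Coulhon--Saloff-Coste machinery relating isoperimetric profile and heat kernel decay on groups.

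For the upper bound $\L_{1,\DA(\T),\m}(v)\lesssim 1/\log\log v$ I would construct explicit F\o lner sets using the pair description of an element of $\DA(\T)$ as $g=(n,\phi)$ with $n\in\Z$ (the shift) and $\phi:V(\T)\to\Sf_q$ finitely supported (the portrait), together with the TSP characterisation of word length from Proposition \ref{metric}. A natural choice is $F_N=\{(n,\phi):n\in[0,N],\ \supp(\phi)\subset T_N\}$, where $T_N\subset V(\T)$ is the tube around a geodesic segment of horocyclic length $N$ with transverse depth $\simeq\log_q N$. Then $|T_N|\simeq N^{2}$, $|F_N|\simeq N\cdot(q!)^{N^{2}}$, so $\log\log|F_N|\simeq\log N$; a direct boundary check against the shift and rooted-permutation generators gives $|\partial F_N|/|F_N|\simeq 1/N\simeq 1/\log\log|F_N|$, the tube depth being chosen precisely to balance the shift-boundary against the portrait-boundary contribution.

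For the matching lower bound $\L_{1,\DA(\T),\m}(v)\gtrsim 1/\log\log v$, the automatic estimate $\L_1(v)\gtrsim 1/\log v$ coming from exponential growth is too weak, and one must exploit the specific structure of $\DA(\T)$. The approach I would pursue is a dichotomy for a candidate small-boundary set $F\subset\DA(\T)$ of volume $v$: either its projection to the shift coordinate $\Z$ has length shorter than $\log v$, in which case $\sigma$ produces boundary of size $\gtrsim |F|/\log v$; or that projection is longer, in which case the TSP formula forces the portraits in $F$ to spread over a subtube of size $\gtrsim\log v$, so that rooted-permutation generators acting at interior portrait sites contribute boundary $\gtrsim |F|/\log\log v$, since realising a portrait modification at depth $k$ in the tree costs $\simeq k \simeq \log\log v$ in word length. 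Combining both cases yields the desired bound.

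Finally, the return probability follows by applying Coulhon--Saloff-Coste (for the upper bound on $\m^{(2n)}(e)$) and the Pittet--Saloff-Coste construction of test random walks (for the lower bound): with $\L_1(v)\simeq 1/\log\log v$ one inverts $\int_1^{V(n)}(\log\log s)^{2}\,ds/s\simeq n$, yielding $V(n)\simeq\exp(n/(\log n)^{2})$ and hence $\m^{(2n)}(e)\simeq\exp(-n/(\log n)^{2})$. The main obstacle is the sharp lower bound on $\L_1$: the upper bound and the conversion to return probability are essentially bookkeeping once the right F\o lner sets are in hand, but ruling out better F\o lner sequences genuinely uses the TSP geometry of the word metric and the transient nature of the inverted orbit on $\T$.
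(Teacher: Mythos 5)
Your overall architecture (F\o lner sets, a matching isoperimetric lower bound, then conversion to the return probability) is the same as the paper's, but both of your key steps have genuine gaps. First, the tube F\o lner sets $F_N$ do not work. The bookkeeping is already inconsistent: with $|F_N|\simeq N(q!)^{N^2}$ one has $\log\log|F_N|\simeq\log N$, not $N$, so a boundary ratio $1/N$ at this volume would give $\L_1(v)\lesssim 1/\sqrt{\log v}$, contradicting the lower bound you also assert. More fundamentally, $F_N$ is not even a F\o lner sequence: by (\ref{step}), right multiplication by a rooted generator $\d_o^\s$ modifies the portrait of $g$ at the inverted-orbit position $o.g^{-1}$, and this position depends on the portrait of $g$ itself. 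For $g=(n,\phi)$ with $n$ comparable to $N$, the vertex $o.g^{-1}$ is the depth-$n$ descendant of $o$ reached by following the central ray twisted by $\phi$; for all but a tiny fraction of portraits supported in a tube of transverse depth $\log_q N$, the first deflection off the ray occurs near the top, so $o.g^{-1}$ lies at distance $\gg\log_q N$ from the central geodesic and hence outside $T_N$. Thus a $1-o(1)$ fraction of $F_N$ is boundary for the $\d_o^{\Sf_q}$ generators. This is precisely why the paper's sets take the portrait support to be the \emph{full} subtree of depth $r$ below $o$ (volume $\simeq r(q!)^{q^r}$, boundary ratio $\simeq 1/r\simeq 1/\log\log v$): then the modification site stays inside the support region for every admissible shift value.

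Second, your lower-bound ``dichotomy'' is not a proof. With threshold $\log v$, the short-projection case only yields boundary $\gtrsim|F|/\log v$, which is weaker than the required $|F|/\log\log v$; and the long-projection case is a heuristic about word lengths of single elements (modifying a portrait at depth $k$ costs $\simeq k$), with no mechanism converting it into a boundary estimate for an arbitrary set $F$. The paper's actual argument is the missing tool: take the subgroup $C_r\cong\Sf_q^{q^r}$ of portraits supported on the $r$-th level below $o$ with the single-site resampling kernel $\eta_r$, apply Erschler's isoperimetric inequality (as in Saloff-Coste--Zheng) to get $\L_{1,C_r,\eta_r}(v)\gtrsim 1$ up to volume $(q!)^{cq^r}$, and then compare Dirichlet forms using that $\supp\eta_r$ has $S_0$-word length $\le 2r+1$, losing exactly a factor $r\simeq\log\log v$. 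Finally, the return-probability \emph{lower} bound is not bookkeeping from $\L_1$: no general theorem converts $\L_1\lesssim 1/\log\log v$ into $\m^{(2n)}(e)\gtrsim e^{-n/(\log n)^2}$, and the naive Pittet--Saloff-Coste test-function bound with the indicator of the F\o lner set gives a gap of order the boundary ratio $1/r$ and hence only $e^{-Cn/\log n}$ after optimization. One needs the $1/r^2$ scaling in the $\Z$-direction: the paper obtains it by confinement, namely on the event that the $\Z$-projection stays in $[-r,r]$ up to time $n$ (probability $\ge c_1e^{-c_2n/r^2}$) the walk is trapped in a set of size $\le 2r(q!)^{|B_\T(o,r)|}$, and optimizing $r\simeq\log n$ gives the exponent $n/(\log n)^2$; equivalently one could use a tent-type test function, which your sketch does not supply. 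The upper bound on $\m^{(2n)}(e)$ via Nash/Coulhon from the $\L_1$ lower bound is the one part of your conversion that matches the paper.
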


Finally, let us mention that the discrete affine group is an example of a locally-finite-by-$\mathbb Z$ group without Shalom's property $H_{\textrm{FD}}$ \cite{BZH_FD}.

The organization of this paper is the following:
In Section \ref{afftree}, we review general properties of the full affine group $\Aff(\T)$.
In Section \ref{dag}, we introduce the discrete affine group $\DA(\T)$, present some basic properties of the group,
and describe the metric structure of a Cayley graph.
In Section \ref{PB}, we derive our main Theorem \ref{main}. 
We  also show that two random walks with vertical drifts of different signs, have mutually singular harmonic in Section \ref{harmonic}.
In Section \ref{emb}, we give an embedding of $\DA(\T)$ into a wreath product group, and compute the $L_p$-compression.
In Section \ref{return-profile}, we compute the $L_1$-isoperimetric profile and the return probability of random walk with respect to any finitely supported symmetric measure.

\section{The affine group of a regular tree}\label{afftree}

\subsection{Generalities} We recall the background on the affine group of a regular tree. We refer to \cite{CKW} for a detailed exposition.

Let $\T=\T_{q+1}$ be the $q+1$-regular tree, equipped with the graph distance $d_\T$.
An end of the tree is an equivalence class of geodesic rays (semi-infinite rays without backtracking), where two geodesic rays are equivalent when their symmetric difference is finite.
We denote by $\partial \T$ the set of all ends.
Fix, once and for all, an end $\o$ and a distinguished vertex $o$ in $\T$.
We still denote by $\o$ the unique geodesic ray issuing from $o$ towards $\o$, and by $\o(n)$ the vertex on $\o$ at distance $n$ from $o$.
The {\it Busemann function} $\b: \T \to \Z$ is defined by 
$$
\b(x)=\lim_{n \to \infty}(d_\T(x, \o(n)) - d_\T(o, \o(n))),
$$
where the limit exists for each $x$, since the sequence stabilizes eventually.
The horocycle $H_m$ is defined as the set of vertices whose value of $\b$ is $m$.
Then one can regard the tree $\T$ as a phylogenic tree rooted at infinity $\o$, and the horocycle $H_m$ as the $m$-th generation.

The  group of graph automorphisms of $\T$ which fix $\o$ is called the {\it affine group} of $\T$ and denoted by $\Aff(\T)$.
(Changing the end $\o$ yields an isomorphic group as the tree is regular.)
Observe that each affine transformation sends one horocyle to the other and
this fact motivates to define a map from the group $\Aff(\T)$ to the integers as follows:
For each $\g \in \Aff(\T)$ and $x \in \T$, we define 
$\F(\g):=\b(x.\g)-\b(x)$.
Note that $\F(\g)$ does not depend on the choice of $x$, and thus the map $\F: \Aff(\T) \to \Z$ is, in fact, a homomorphism. 
Define the subgroup $\Hor(\T)$ of $\Aff(\T)$ as the kernel of $\F$.
The group $\Hor(\T)$ preserves each horocycle.

One can verify that for the topology induced by the pointwise convergence on $\T$, the group $\Hor(\T)$ is an inductive limit of compact groups
(the subgroups fixing a vertex on the geodesic ray $\o$). Moreover, the exact sequence 
$$
1 \to \Hor(\T) \to \Aff(\T) \to \Z \to 1
$$
splits.
Indeed, choosing some $\a$ such that $\F(\a)=1$ determines the semidirect product decomposition of $\Aff(\T)$ by $\Hor(\T) \rtimes \Z$.
The group $\Aff(\T)$ is a locally compact topological group which is non-unimodular by \cite{SW90} and amenable as a topological group by \cite{Neb}.

\subsection{Labelling of the tree} 
Let us introduce a labelling of the tree $\T$. We identify the horocycle $H_m=\b^{-1}(m)$ with the set of all sequences indexed by $]-\infty,m]\cap \Z=\rrbracket -\infty,m\rrbracket$, taking values in the alphabet $\mathcal{A}_q=\{0,\dots,q-1\}$, and constant equal to $0$ on a subinterval $\rrbracket -\infty,n\rrbracket$. In the phylogenic description, the $k$-ancestor of a sequence is obtained by deleting the last (rightmost) $k$ entries. The edges of the tree link each vertex to its first ancestor. 

Up to changing the labelling, we can choose $\o(n)$ to be the constant sequence equal to $0$ on $\rrbracket -\infty, -n \rrbracket$, and $\a$ to be the shift $(u_k)_{k \leq m} \mapsto (u_{k-1})_{k \leq m+1}$ from $H_m$ to $H_{m+1}$.

The labelling extends naturally to the boundary, parametrized by the set of sequences indexed by $\Z$ taking values in $\mathcal{A}_q$ and constant equal to $0$ on a neighborhood of $-\infty$, plus the empty sequence corresponding to our chosen end $\o$ --- see Figure \ref{fig_tree}.
We warn that the sign of index $n$ for $\o(n)$ and $H_n$ is opposite.

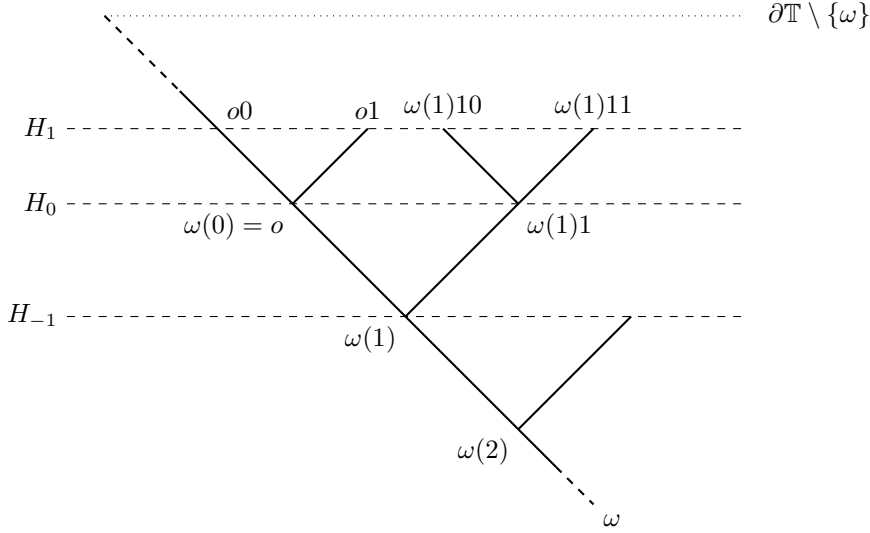
\begin{figure}
	\begin{center}
\begin{tikzpicture}

\draw[dotted] (0.5,3.5)--(9,3.5) ;
\node at (10,3.5) {$\partial \mathbb{T} \setminus\{\omega\}$};
\draw[thick] (2,2)--(1.5,2.5);
\draw[thick, dashed] (0.5,3.5)--(1.5,2.5);
\draw[thick] (2,2)--(6,-2);
\draw (2,2) node[above right]{$o0$};
\draw (4,2) node[above]{$o1$};
\draw (3,1) node[below left]{$\omega(0)=o$};
\draw[thick] (4.5,-0.5) --(7,2);
\draw (4.5,-0.5) node[below left]{$\omega(1)$};
\draw[thick] (5,2) --(6,1);
\draw (5,2) node[above]{$\omega(1)10$};
\draw (7,2) node[above]{$\omega(1)11$};
\draw (6,1) node[below right]{$\omega(1)1$};
\draw[thick] (3,1) --(4,2);
\draw[thick] (6,-2) --(7.5,-0.5);
\draw[thick] (6,-2) --(6.5,-2.5);
\draw (6,-2) node[below left]{$\omega(2)$};
\draw[thick, dashed] (6.5,-2.5)--(7,-3) node[below right]{$\omega$};
\draw[dashed] (0,2)--(9,2);
\draw (0,2) node[left]{$H_1$};
\draw[dashed] (0,1)--(9,1);
\draw (0,1) node[left]{$H_0$};
\draw[dashed] (0,-0.5)--(9,-0.5);
\draw (0,-0.5) node[left]{$H_{-1}$};

\end{tikzpicture}
	\end{center}
	\caption{The affine $3$-regular tree with its labelling.}
	\label{fig_tree}
\end{figure}

\subsection{Notion of portrait} 
If an element $\g$ of $\Hor(\T)$ fixes a vertex $v$, then it induces an automorphism of the rooted subtree $\T_v$ obtained by restriction to the root vertex $v$ and all its descendants. Automorphism groups of rooted trees have been largely studied --- see for instance \cite{GriNewH}, \cite{Nek}, \cite{Zuk} and references therein. The notion of portrait is a useful tool in their study, and can be generalized here as follows.

Consider the symmetric group $\mathfrak{S}_{q}$ acting by permutations on $\mathcal{A}_q$. 
Let $\g: \T \rightarrow \mathfrak{S}_{q}$ be a function denoted $v \mapsto \g[v]$. 
Assume that $\g[\o(n)]=e$ for all but finitely many $n$. 
Such a function defines an element of $\Hor(\T)$, still denoted $\g$, as follows.

For $n \ge N=\max\{n \ : \ \g[\o(n)]\neq e\}$, the action of $\g$ on the vertex $\o(n)$ is trivial. 
Now assume that we know the action of $\g$ on a vertex $v$ identified with its labelling sequence. 
The action of $\g$ on the descendant $vt$, where $t \in \mathcal{A}_q$, is given recursively by the ``automata rule'':
\begin{eqnarray}\label{automata}
vt.\g=(v.\g) (t.\g[v]),
\end{eqnarray}
where $t.\g[v]$ is the action of $\g[v]\in\mathfrak{S}_q$ on $t \in \mathcal{A}_q$. This permits to define by induction the action of $\g$ on all $\T$ since any vertex $v$ has the form $\o(n)t_1\dots t_k$ for some $n \ge N$ and $t_i$ in $\mathcal{A}_q$.

By construction, any element $\g \in \Hor(\T)$ can be obtained bijectively from such a function $\T \rightarrow \mathfrak{S}_q$ still denoted by $\g$ and called its {\it portrait}. 
The composition rule for the portrait of a product is:
\begin{eqnarray}\label{portrait-hor} 
\g\g'[v]=\g[v]\g'[v.\g]. 
\end{eqnarray}
We use right action, meaning $\g$ is applied first and then $\g'$. We refer to \cite{GriNewH} for details in the case of a rooted tree.

By the semidirect product decomposition $\Aff(\T)=\Hor(\T) \rtimes \Z$, an element of the affine group is identified with a pair $g=(\g,m)=\g \a^m$. The product rule is:
\begin{eqnarray*}\label{portrait-shift}gh=(\g, m)(\g', m')=(\g (\g')^{\a^m}, m+m'), \end{eqnarray*}
where the conjugate $\g^\a=\a\g\a^{-1}$ has the portrait $\g^\a[v]=\g[v.\a]$.

By abuse of notation, we denote by $g[v]$ instead of $\g[v]$ when $g=\g \a^m$, so for instance $\a[v]=e$ for all $v$. The combination of the previous line with (\ref{portrait-hor}) gives: for $g,h \in \Aff(\T), v \in \T$,
\begin{eqnarray}\label{portrait}
\quad gh[v]=g[v]h[v.g].
\end{eqnarray}
It follows that the portrait of a product $gh$ is obtained from that of $g$ after multiplying by $h[u]$ at position $u.g^{-1}$, for every $u$ in $\T$.

\section{The discrete affine group}\label{dag}

\subsection{Generalities} Since the subgroup of $\Hor(\T)$ consisting of elements with finitely supported portrait is invariant under conjugation by the shift, the following is a subgroup of $\Aff(\T)$, which we call the {\it discrete affine group}.
$$\DA(\T):=\{g \in \Aff(\T) \ : \ g[v]=e \textrm{ for all but finitely many }v\}. $$
This is neither a lattice of the affine group $\Aff(\T)$ (which cannot exist by the non-unimodularity),
nor a discrete subgroup of the topological affine group, but a finitely generated dense subgroup of $\Aff(\T)$ --- see Proposition \ref{gdeal}.

For each element $\s \in \mathfrak{S}_{q}$ and each vertex $v \in \T$, denote $\d^\s_v$ the horocyclic element with portrait given by
\[
\d_v^\s[w]=\left\{\begin{array}{ll} e &  \textrm{ if } w\neq v \\ \s  & \textrm{ if } w=v. \end{array}\right.
\]
For each $v \in \T$, the morphism $\s \mapsto \d^\s_v$ is injective of finite image $\d_v^{\mathfrak{S}_q}\subset\Hor(\T)$ isomorphic to $\mathfrak{S}_{q}$.

\begin{proposition}\label{gdeal} The group $\DA(\T)$ 
\begin{enumerate}
\item is generated by the finite set $S_0=\{\a,\a^{-1}\} \cup \d_o^{\mathfrak{S}_{q}}$,
\item is dense in $\Aff(\T)$ in the pointwise convergence topology,
\item is elementary amenable,
\item does not satisfy any group law.
\end{enumerate}
\end{proposition}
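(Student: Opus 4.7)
The plan is to address the four claims in turn. For (1), I would first reduce to showing that each $\d_v^\s$ lies in $G:=\langle S_0\rangle$, since any $\g\in\Hor(\T)\cap\DA(\T)$ is a finite product of such elements: by (\ref{portrait-hor}), left-multiplying $\g$ by $\d_v^{\g[v]^{-1}}$ for any $v$ in the (finite) portrait support of $\g$ kills the value at $v$ and merely permutes the other support points inside $\text{desc}(v)$, so the portrait size drops by exactly one. For $v=\o(n)$ on the ray the identity $\d_{\o(n)}^\s=\a^n\d_o^\s\a^{-n}$ (using $(\g^{\a^n})[v]=\g[v.\a^n]$ together with $v.\a^{-n}=o\Leftrightarrow v=\o(n)$) does the job. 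For $v$ off the ray I would write $v=\o(n)t_1 t_2\dots t_k$ with $t_1\neq 0$ and induct on $k$: the element $\d_{\o(n)}^{(0\,t_1)}\in G$ swaps the subtrees $\T_{\o(n-1)}$ and $\T_{\o(n)t_1}$ as rooted trees, so conjugating $\d_{v'}^\s$ (with $v'=\o(n-1)t_2\dots t_k$, which is in $G$ by induction) by $\d_{\o(n)}^{(0\,t_1)}$ produces exactly $\d_v^\s$, as a direct calculation with (\ref{portrait-hor}) confirms.

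For (2), a basic open neighborhood of $g=\g\a^m\in\Aff(\T)$ in the pointwise topology is $\{h:h|_F=g|_F\}$ for a finite $F\subset\T$. Since every element of $\Aff(\T)$ fixes the end $\o$, the two rays $\o$ and $\g(\o)$ must coincide eventually, so $\g$ fixes $\o(n)$ for all $n\geq N_\g$ large enough. Then by the automata rule (\ref{automata}), $\g(v)$ for each $v\in F$ is determined by the finitely many portrait values of $\g$ along the path from $\o(N_\g)$ down to the parent of $v$. Truncating $\g$'s portrait to the finite union of these paths produces $\g'\in\Hor(\T)\cap\DA(\T)$ agreeing with $\g$ on $F$, so $\g'\a^m\in\DA(\T)$ lies in the neighborhood. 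For (3), from the semidirect decomposition $\Aff(\T)=\Hor(\T)\rtimes\langle\a\rangle$ one inherits $\DA(\T)=(\Hor(\T)\cap\DA(\T))\rtimes\Z$, and it suffices to check that $\Hor(\T)\cap\DA(\T)$ is locally finite. The portraits of any finite family of generators lie within depth $\leq D$ of some $\o(N)$; by (\ref{portrait-hor}) this property is preserved under products (for $w$ beyond depth $D$, both $\g[w]$ and $\g'[w.\g]$ vanish), so the subgroup generated embeds into the finite iterated wreath product $\Aut(T_{D+1})$. Elementary amenability follows from closure of the class under extensions, together with the fact that locally finite groups and $\Z$ are elementary amenable.

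For (4), my plan is to show that $\DA(\T)$ contains every finite $2$-group, which is enough. The finite-portrait automorphisms of the depth-$d$ subtree below $o$ give an embedding $\Aut(T_d)=\mathfrak{S}_q^{\wr d}\hookrightarrow\DA(\T)$ for every $d$. Inside $\Aut(T_d)$, restricting at each level the copy of $\mathfrak{S}_q$ to the transposition subgroup $\langle(0\,1)\rangle$ and to two of its $q$ slots gives, by induction on $d$, a copy of the iterated wreath product $\Z_2^{\wr d}$, which by Kaloujnine's theorem is a Sylow $2$-subgroup of $\mathfrak{S}_{2^d}$. By Cayley's embedding followed by the Sylow theorem, every finite $2$-group of order at most $2^d$ embeds into $\Z_2^{\wr d}$, hence into $\DA(\T)$. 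Invoking the classical theorem of Magnus that the free group $F_k$ is residually a finite $2$-group, for any nontrivial $w\in F_k$ there exists a finite $2$-group $G$ and a morphism $F_k\to G$ sending $w$ to a non-identity element; pulling the generators back through an embedding $G\hookrightarrow\DA(\T)$ yields $g_1,\dots,g_k\in\DA(\T)$ with $w(g_1,\dots,g_k)\neq e$. The main obstacle is (4): combining the structural embedding of iterated Sylow $2$-subgroups into the finite wreath products that sit naturally in $\DA(\T)$ with the nontrivial input of Magnus's residual-$p$ property of free groups is the key non-routine step, while (1)--(3) amount to systematic calculations within the portrait formalism.
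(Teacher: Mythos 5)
Your proposal is correct. For (1)--(3) you follow essentially the paper's route: (2) and (3) are virtually identical to the paper's arguments (agree with $g$ on a finite set by truncating the portrait below a high ray vertex that $\g$ fixes; cyclic extension of a locally finite group), and for (1) your peel-off of the portrait one vertex at a time together with conjugation of $\d_o^\s$ by powers of $\a$ and by the transposition elements $\d_{\o(n)}^{(0\,t_1)}$ is a hands-on implementation of the same idea the paper packages as a folklore lemma, namely that the copies of $\mathfrak{S}_q$ supported along the distinguished ray generate the automorphism group of each finite rooted subtree. The genuine divergence is in (4): the paper quotes Ab\'ert's theorem and simply checks the separability condition for the action of $\DA(\T)$ on $\partial\T\setminus\{\o\}$, using that $\d_v^{\mathfrak{S}_q}\subset\DA(\T)$ for every $v$, which settles the claim in two lines once that criterion is available. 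You instead use the copies of $\Aut(T_d)\cong\mathfrak{S}_q\wr\cdots\wr\mathfrak{S}_q$ given by finitely supported portraits below $o$, embed the iterated wreath product $\Z_2\wr\cdots\wr\Z_2$ (a Sylow $2$-subgroup of $\mathfrak{S}_{2^d}$ by Kaloujnine), hence every finite $2$-group by Cayley plus Sylow, and conclude via Magnus's theorem that free groups are residually finite $2$-groups; your verification that the conjugation $\d_{\o(n)}^{(0\,t_1)}$ transports $\d_{v'}^\s$ to $\d_v^\s$ is sound. Both arguments are valid (and both tacitly require $q\ge 2$, as they must, since for $q=1$ the group is $\Z$). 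The paper's approach buys brevity and works directly from the boundary action; yours is self-contained modulo classical finite-group facts and yields the slightly stronger structural conclusion that already the locally finite subgroup $\Hor(\T)\cap\DA(\T)$ contains every finite $2$-group and hence satisfies no group law.
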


Recall that a group $G$ satisfies a {\it group law} if there exists a non-trivial irreducible word $w$ in an alphabet of size $k$ such that $w(g_1, \dots, g_k)=\textrm{id}$ for all $g_i \in G$, $1 \le i \le k$.

\begin{proof}
(1) As for all $m \in \Z$, we have $(\d_o^\s)^{\a^m}=\d_{\o(m)}^\s$, it is sufficient to check that $\d_{\o(m)}^{\mathfrak{S}_q}$ for $m$ in $\Z$ generates $\Hor(\T)\cap \DA(\T)$. It follows from the following folklore lemma applied to show that $\d_{\o(m)}^{\mathfrak{S}_q}$ for $m=-M$ to $M$ generate the subgroup of $\Hor(\T)$ consisting of elements with portrait trivial outside the $q$-ary subtree rooted at $\o(M)$ of depth $2M+1$ (which is naturally isomorphic to the group of automorphism of this finite rooted tree).

\begin{lemma}
Let $T(k)$ be a $q$-regular rooted tree of finite depth $k+1$, with a distinguished ray from the root to a leaf. For $0\leq i \leq k$, denote $\D_i \simeq S_q$ the subgroup of automorphisms of $T(k)$ with portrait supported on the $i$-th vertex of the ray. Then the union $\cup_{i=0}^k\D_i$ is a generating set of $\Aut(T(k))$.
\end{lemma}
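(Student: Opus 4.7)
The plan is to induct on the depth parameter $k$. For the base case $k=0$, the tree $T(0)$ consists of the root with its $q$ leaves, $\Aut(T(0))=\mathfrak{S}_q$ acts by permuting the leaves, and this action is realized exactly by $\Delta_0$.

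For the inductive step, let $v_0$ be the root of $T(k)$ and let $T_0,\ldots,T_{q-1}$ be the $q$ subtrees rooted at its children, labelled so that $T_0$ contains the next ray vertex $v_1$. Each $T_j$ is a $q$-ary rooted tree of depth $k$, and there is the standard wreath-product identification
\[
\Aut(T(k))=\Aut(T_0)^{q}\rtimes\mathfrak{S}_{q},
\]
where the outer $\mathfrak{S}_q$ permutes the indices $\{0,\ldots,q-1\}$. I identify this outer copy of $\mathfrak{S}_q$ with $\Delta_0$, which by definition permutes the children of $v_0$ and extends identically below.

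The main observation is then that for $i\ge 1$ any $\gamma\in\Delta_i$ acts trivially on every $T_j$ with $j\ne 0$: unwinding the recursion (\ref{automata}) from the root downward, the trivial portrait at $v_0$ first forces each child of $v_0$ to be fixed, and the trivial portrait at every vertex off the ray and at every ray vertex strictly before $v_i$ then propagates the identity onto every vertex not lying below $v_i$. Hence $\Delta_1,\ldots,\Delta_k$ embed into $\Aut(T_0)$ as precisely the portrait subgroups at the vertices $v_1,\ldots,v_k$ of the ray of $T_0$, and by the inductive hypothesis applied to $T_0$ of depth $k$,
\[
\langle\Delta_1,\ldots,\Delta_k\rangle=\Aut(T_0).
\]

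Finally, for each $j\ne 0$ I would choose $\pi\in\Delta_0$ sending $T_0$ to $T_j$ and take the conjugate $\pi\langle\Delta_1,\ldots,\Delta_k\rangle\pi^{-1}=\Aut(T_j)$ inside $\Aut(T(k))$. Since automorphisms supported on disjoint subtrees commute, these conjugates together generate the direct product $\Aut(T_0)^q$; combining with $\Delta_0$ yields the full wreath product, completing the induction. The only step needing genuine verification is the claim that $\Delta_i$ for $i\ge 1$ fixes each $T_j$ with $j\ne 0$, which is a direct unwinding of the recursive portrait rule; all other ingredients are the routine iterated-wreath description of $\Aut$ of a regular rooted tree.
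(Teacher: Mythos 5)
Your proof is correct and follows essentially the same route as the paper's: induction on the depth via the wreath product decomposition $\Aut(T(k))\simeq \Aut(T(k-1))\wr_{\{0,\dots,q-1\}}\D_0$, with $\D_1,\dots,\D_k$ generating the distinguished copy of $\Aut(T(k-1))$ and the remaining copies obtained by conjugation by $\D_0$. You merely spell out in more detail the verification that each $\D_i$, $i\ge 1$, is supported on the subtree containing the ray, which the paper leaves implicit.
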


\begin{proof}
We use induction on $k$ and the permutational wreath product isomorphism $\Aut(T(k)) \simeq \Aut(T(k-1)) \wr_{\{0,\dots,q-1\}} \D_0$. By induction, $\D_1,\dots,\D_k$ generate the first copy of $\Aut(T(k-1))$. The other copies are obtained by conjugation by $\D_0$.
\end{proof}

(2) We have to prove that given an element $g=(\g, m)$ in $\Aff(\T)$ and a finite collection of points in the tree, we can find an element in $\DA(\T)$ with the same action as $\g$ on these points. 
For this, choose $\o(n)$ a common ancestor of all the given points. Any element $\g'$ in $\Hor(\T)$ fixing $\o(n)$ and with the same portrait as $\g$ on a finite subtree containing $\o(n)$ and the given points will have the same action as the horocyclic part of $g$. Thus $\g'\a^m$ is the required element.

(3) This group is a cyclic extension of the subgroup of $\Hor(\T)$ the elements of which have finite support. 
The latter is locally finite, whence the group $\DA(\T)$ is elementary amenable.

(4) is obtained from the following theorem by Ab\'ert.

\begin{theorem}[Theorem 1 in \cite{A}]\label{Abert}
Let $G$ be a group acting on a set $X$, satisfying the following separability condition:
for every finite subset $Y$ of $X$, the pointwise stabilizer 
\[
G_Y:=\bigcap_{v \in Y}\{g \in G \ : \ gv=v\}
\]
does not stabilize any point outside $Y$.
Then $G$ does not satisfy any group law.
\end{theorem}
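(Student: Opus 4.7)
The plan is to prove the contrapositive: given a nontrivial reduced word $w(x_1,\dots,x_k)$, I exhibit elements $g_1,\dots,g_k \in G$ and a point $p \in X$ with $p.w(g_1,\dots,g_k) \neq p$. Write $w = x_{i_1}^{\e_1} x_{i_2}^{\e_2}\cdots x_{i_n}^{\e_n}$ in reduced form, and denote by $w_j$ its prefix of length $j$. The aim is to construct the $g_\ell$'s together with a point $p$ such that the points $p_j := p.w_j(g_1,\dots,g_k)$ for $j=0,\dots,n$ are pairwise distinct; in particular $p_n \neq p_0$, so $w$ fails as a law.

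I would construct the $g_\ell$'s step by step as partial permutations of growing finite subsets of $X$, realised along the way by actual elements of $G$. At step $j$ I maintain: (a) distinct points $p_0,\dots,p_{j-1}$, and (b) for each $\ell$, a partial permutation $\pi_\ell$ of a finite subset of $X$ containing all relevant $p_m$'s, such that $\pi_\ell$ is consistent with some element of $G$ (and these elements will be the final $g_\ell$'s once the construction stabilises). To define $p_j$: if the pair $(p_{j-1}, x_{i_j}^{\e_j})$ has already been determined by some earlier step of the induction (i.e.\ $p_{j-1}$ is in the domain of the relevant partial permutation with its image already prescribed), then $p_j$ is forced and I must check it is still new; otherwise I have freedom and choose $p_j$ to be a fresh point outside the finite set of previously used points.

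The two issues to settle are: (i) in the forced case the forced value is indeed different from $p_0,\dots,p_{j-1}$, and (ii) in the free case, one can actually extend $\pi_{i_j}$ so as to send $p_{j-1}$ to any desired fresh point while still being realised by an element of $G$. Point (ii) is the place where the separability condition is used. Given any finite partial permutation $\pi$ of $X$ realised by some $h\in G$, a fresh point $q\notin \mathrm{dom}(\pi)\cup\mathrm{im}(\pi)$, and a target $q'$ outside $\mathrm{im}(\pi)\cup\{q.h\}$, one looks for $h' \in G$ extending $\pi$ with $q.h'=q'$. By replacing $h$ with $h\cdot s$ for a suitable $s$ in the pointwise stabiliser $G_{\mathrm{dom}(\pi)}$, the action of $h$ on $\mathrm{dom}(\pi)$ is preserved; by the separability hypothesis $G_{\mathrm{dom}(\pi)}$ does not fix the point $q.h$, and indeed it is easy to see from iterated application of the separability hypothesis (to stabilisers of ever larger finite sets) that the $G_{\mathrm{dom}(\pi)}$-orbit of $q.h$ is infinite, so $q.h$ can be moved to any point of the complement of a prescribed finite set, in particular to $q'$. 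Point (i) is controlled by the reducedness of $w$: the step $j-1 \to j$ cannot simply reverse the previous move, so the only obstructions come from earlier occurrences of $x_{i_j}^{\e_j}$ in the word, and these can be avoided by always choosing fresh targets with enough room (keeping a buffer larger than the word length).

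The main obstacle will be the bookkeeping in (i): one must carefully parse the reduced word to identify, at each step, which constraints on the partial permutations have already been imposed, and arrange the free choices early on so that no later forced step collides with a previously visited point. Once this combinatorial scheme is set up, the group-theoretic content reduces to the extension lemma above, which is a direct consequence of the separability condition applied to the finite set $\mathrm{dom}(\pi)$, together with the fact that each $\pi_\ell$ is built from only finitely many such extensions and therefore can be realised by a single element of $G$ at the end.
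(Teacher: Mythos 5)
The paper does not prove this statement: it is quoted verbatim from Ab\'ert \cite{A}, so there is no internal proof to compare against. Your outline is essentially Ab\'ert's own argument (realise the $g_\ell$ as finitely many extensions of partial injections, each extension witnessed by an actual group element, so that the trajectory $p_0,\dots,p_n$ is injective and $p_n\neq p_0$), and the architecture is sound, but two steps are stated more strongly than what the hypothesis gives, and one worry you raise is unnecessary. First, separability does \emph{not} let you send $q$ to an arbitrary prescribed $q'$ outside a finite set: it only yields that the relevant orbit is \emph{infinite}. (If the orbit $O$ of a point $x\notin Y$ under $G_Y$ were finite, apply the hypothesis to $Y\cup(O\setminus\{x\})$ to get $g$ fixing $O\setminus\{x\}$ pointwise with $gx\neq x$; then $gx\in O\setminus\{x\}$ is fixed by $g$, so $g(gx)=gx$ and hence $gx=x$, a contradiction.) So you must \emph{choose} $p_j$ inside that infinite orbit, avoiding the finitely many points already used --- which is all the construction needs, since the target in the free case is at your disposal. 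Relatedly, if $h'=h\cdot s$ is to agree with $h$ on $\mathrm{dom}(\pi)$ under a right action, $s$ must lie in the stabiliser of $\mathrm{im}(\pi)$, not of $\mathrm{dom}(\pi)$; and the hypothesis $q\notin\mathrm{im}(\pi)$ in your extension lemma is both unneeded and actually violated for words with a repeated letter such as $x_i^2$ --- what is needed, and what holds, is $q\notin\mathrm{dom}(\pi)$, which gives $q.h\notin\mathrm{im}(\pi)$ by injectivity of $h$. Second, the bookkeeping you identify as the main obstacle evaporates: if $p_0,\dots,p_{j-1}$ are already pairwise distinct, then $p_{j-1}$ can lie in the domain of the partial injection attached to $x_{i_j}^{\e_j}$ only if it was placed there at step $j-1$ itself, i.e.\ only if $x_{i_{j-1}}^{\e_{j-1}}=x_{i_j}^{-\e_j}$, which reducedness forbids. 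Hence the forced case never occurs, every step is free, and no buffer or parsing of earlier occurrences of the same letter is required. With these repairs the proposal is a complete proof.
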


We observe that the action of $\DA(\T)$ on $\partial \T \setminus\{\o\}$ satisfies the separability condition in Theorem \ref{Abert}.
Indeed, it follows from the fact that for every $v \in \T$, we have $\d_{v}^{\mathfrak{S}_q} \subset \DA(\T)$.
\end{proof}

\begin{remark} We can replace $\mathfrak{S}_{q}$ by any of its subgroup $S$ with transitive action on $\mathcal{A}_q=\{0,\dots,q-1\}$, and obtain a group $\DA_S(\T)$ finitely generated, elementary amenable and dense in $\Aff_S(\T)$ in which all permutations of the portrait are in $S$ rather than arbitrary.
\end{remark}

\subsection{Description of the word metric}\label{subsec:wordmetric}

Our first aim here is to understand how to compute the portrait of a given word in the generators $S_0=\{\a,\a^{-1}\} \cup \d_o^{\mathfrak{S}_{q}}$ of the discrete affine group $\DA(\T)$.

By induction, let $g$ be such a word  and $h$ be a generator. By (\ref{portrait}), the portrait of $gh$ satisfies that 
for $v \in \T$, 
\begin{eqnarray}\label{step}\quad
gh[v]=g[v]h[v.g]=\left\{\begin{array}{ll}
g[v] & \textrm{ if } h\in \{\a,\a^{-1}\} \\
g[v]\d_o^\s[v.g]=g[v]\d_{o.g^{-1}}^\s[v] & \textrm{ if } h=\d_o^\s.
\end{array} \right.
\end{eqnarray}
In particular, the portrait is the same as that of $g$ except possibly at the point $o.g^{-1}$. This makes it important to understand what happens to this point when we multiply by a generator in $S_0$. So we want to express $o.(gh)^{-1}=o.h^{-1}g^{-1}$ in terms of $o.g^{-1}$ and the generator $h$. We record the

\begin{fact}\label{record}
We have the following multiplication rules:
\begin{enumerate}
\item If $h=\d_o^\s$, then $o.(\d_o^\s)^{-1}=o$, so $o.(g\d_o^\s)^{-1}=o.g^{-1}$.

\item If $h=\a$, then $o.\a^{-1}=\o(1)$ is the first ancestor of $o$, so $o.(g\a)^{-1}$ is the ancestor of $o.g^{-1}$.

\item If $h=\a^{-1}$, then $o.\a=o0$, so by the automata rule (\ref{automata})
\[
o.(g\a^{-1})^{-1}=o0.g^{-1}=(o.g^{-1})0.g^{-1}[o],
\]
which is a first descendant of $o.g^{-1}$ determined by $g^{-1}[o]=\left(g[o.g^{-1}] \right)^{-1}$.
\end{enumerate}
\end{fact}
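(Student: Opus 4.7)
The plan is to verify each of the three identities by direct computation, using the labelling of $\T$ together with the explicit formula for the shift $\a$, the automata rule (\ref{automata}), and the portrait composition rule (\ref{portrait}). In every case the starting point is the elementary identity $o.(gh)^{-1}=(o.h^{-1}).g^{-1}$: I first determine $o.h^{-1}$ from the definition of the generator $h$, and then translate the result by $g^{-1}$.

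For part (1), I would first show that $\d_o^\s$ fixes $o$. Its portrait is trivial at every vertex other than $o$, in particular at the ancestors $\o(n)$ for $n\geq 1$. Writing $o=\o(n)\cdot 0\cdots 0$ for some $n\geq 1$ and applying (\ref{automata}) step by step, starting from $\o(n).\d_o^\s=\o(n)$, each intermediate portrait value $\d_o^\s[\o(k)]$ with $k\geq 1$ is trivial, so the induced action on $o$ is the identity. Since any bijection fixes a point if and only if its inverse does, $o.(\d_o^\s)^{-1}=o$, whence $o.(g\d_o^\s)^{-1}=o.g^{-1}$.

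For part (2), I would apply the explicit shift formula: $\a^{-1}:H_0\to H_{-1}$ sends $(u_k)_{k\leq 0}$ to $(u_{k+1})_{k\leq -1}$. Evaluated on the all-zero sequence representing $o$, this yields the all-zero sequence on $\rrbracket-\infty,-1\rrbracket$, which by our labelling is $\o(1)$. Since $g^{-1}\in\Aff(\T)$ fixes the end $\o$ and acts on horocycle indices by a fixed shift, it preserves the parent--child relation, so $\o(1).g^{-1}$ is the (first) ancestor of $o.g^{-1}$.

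For part (3), the symmetric computation gives $o.\a=o0$, the child of $o$ labelled $0$. Hence $o.(g\a^{-1})^{-1}=o0.g^{-1}$, and the automata rule (\ref{automata}) expands this as $(o.g^{-1})(0.g^{-1}[o])$, a first descendant of $o.g^{-1}$. It remains to identify $g^{-1}[o]$. Evaluating (\ref{portrait}) at the trivial product $gg^{-1}=\textrm{id}$ and at the point $v=o.g^{-1}$ gives $e=g[o.g^{-1}]\,g^{-1}[o]$, so $g^{-1}[o]=(g[o.g^{-1}])^{-1}$. No serious obstacle arises: the entire Fact is essentially bookkeeping with the conventions set up in Sections \ref{afftree} and \ref{dag}.
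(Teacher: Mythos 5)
Your verification is correct and takes essentially the same route as the paper, which justifies Fact \ref{record} inline by exactly these observations: the shift formula giving $o.\a^{-1}=\o(1)$ and $o.\a=o0$, the automata rule (\ref{automata}), and the identity $g^{-1}[o]=\left(g[o.g^{-1}]\right)^{-1}$ coming from (\ref{portrait}) applied to $gg^{-1}=\mathrm{id}$ at $v=o.g^{-1}$. Nothing is missing; this is the intended bookkeeping argument.
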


Consider a word $w_n=x_1\dots x_n$ (which may not be reduced) in the generators $S_0$ and denote by $w_i=x_1\dots x_i$ its prefix of length $i$. It follows from (\ref{step}) that the support of the portrait of $w_n$ is included in the inverted orbit $O_n=\{o,o.w_1^{-1}, o.w_2^{-1},\dots,o.w_n^{-1}\}$. 

The inverted orbit $O_n$ is a sequence of points in the Schreier graph $\Gc(\DA(\T),S_0,o)$ --- see Figure \ref{Schreier}. Usually, it is extremely difficult to understand the inverted orbit of a word in a given action. In the present particular case, Fact \ref{record} permits to describe clearly $O_n$ from the word $w_n$. 
We make use of it to estimate the word metric in $\DA(\T)$. 

The set $S_1=\d_o^{\mathfrak{S}_{q}}\{\a^{\pm1}\}\d_o^{\mathfrak{S}_{q}}$ is also a generating set of $\DA(\T)$, as follows easily from Proposition \ref{gdeal}(1). 
It can be compared to the ``switch-walk-switch'' generating sets in lamplighter groups, for which the word metric is given by travelling salesman paths on the base graph. This is also the case here.

In a graph $G$, given a starting vertex $x$, an end vertex $y$ and a collection $A$ of vertices, we denote by $\textrm{TSP}_G(A;x,y)$ the length of the shortest path starting at $x$, visiting all vertices in $A$ and ending in $y$, called a travelling salesman path.

Let us denote by $\supp(g)=\{v\in \T \ : \ g[v] \neq e\}$ the support of the portrait of an element $g$ of $\DA(\T)$. We set $\ell(g)$ to be the number of edges of the smallest subtree $T(g)$ containing $\supp(g)\cup\{o,o.g^{-1}\}$. 

\begin{proposition}\label{metric}
The word metric in $\DA(\T)$ with respect to the generating set $S_1=\d_o^{\mathfrak{S}_{q}}\{\a^{\pm1}\}\d_o^{\mathfrak{S}_{q}}$ is given by
\[
 |g|_{S_1}={\rm TSP}_\T(\supp(g);o,o.g^{-1})=2\ell(g)-d_{\T}(o,o.g^{-1}).
\]
for all $g \in \DA(\T)\setminus \d_o^{\mathfrak{S}_{q}}$. Moreover $|\d_o^\s|_{S_1}=2$ for $\s$ non-trivial in $\mathfrak{S}_q$.
\end{proposition}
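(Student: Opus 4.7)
The plan is to translate $|g|_{S_1}$ into a traveling salesman problem on $\T$ by describing the effect of each $S_1$-letter on the pair (tracked point, portrait). Combining Fact \ref{record} with (\ref{step}), right-multiplying a word $w$ by a single letter $s = \d_o^{\s_1}\a^{\e}\d_o^{\s_2}$ (with $\e \in \{\pm 1\}$) moves the tracked point $o.w^{-1}$ by exactly one edge of $\T$: to its ancestor if $\e = +1$, and to a child if $\e = -1$ (the specific child being determined by $\s_1$ together with the current portrait at $o.w^{-1}$). Simultaneously, the portrait is multiplied by $\s_1$ at the old tracked point and by $\s_2$ at the new one. For the lower bound, a factorization $g = s_1 \cdots s_n$ with $s_i \in S_1$ yields, via $w_i = s_1 \cdots s_i$, a walk $(o.w_i^{-1})_{i=0}^n$ of length $n$ in $\T$ from $o$ to $o.g^{-1}$; since portraits are modified only at tracked points, this walk must cover $\supp(g)$, hence $n \ge \TSP_\T(\supp(g); o, o.g^{-1})$.

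For the upper bound, fix a shortest salesman walk $(v_0 = o, v_1, \ldots, v_N = o.g^{-1})$ in $\T$ visiting $\supp(g)$, and for each step $i$ set $\e_i = +1$ if $v_i$ is the ancestor of $v_{i-1}$ and $\e_i = -1$ otherwise. It remains to choose decorating permutations $(\s_1^{(i)}, \s_2^{(i)})$ so that (i) the $i$-th step lands at $v_i$, and (ii) the cumulative portrait modifications at each vertex yield $g[\cdot]$. Each $\s_2^{(i)}$ is free, while $\s_1^{(i)}$ is free when $\e_i = +1$ and constrained to a specific coset of $\mathfrak{S}_{q-1}$ in $\mathfrak{S}_q$ when $\e_i = -1$. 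At every vertex $v$ visited after time $0$, at least one $\s_2^{(j)}$ contributes a free degree of freedom at $v$, so the cumulative product at $v$ can be tuned to $g[v]$. The only delicate case is when $v_0 = o$ is visited only at time $0$; then $\s_1^{(1)}$ alone determines the portrait at $o$ and must also steer the walk to $v_1$, but the automata rule makes the target $g[o]$ itself a valid $\s_1^{(1)}$ (for $\e_1 = -1$, the relation $o.g^{-1} = o \cdot g[o]^{-1}(0)$ forces $g[o]$ to send the label of $v_1$ to $0$, which is exactly the steering condition).

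The identity $\TSP_\T(\supp(g); o, o.g^{-1}) = 2\ell(g) - d_\T(o, o.g^{-1})$ is the classical tree-TSP formula: an optimal salesman walk from $x$ to $y$ in a tree traverses every edge of the Steiner tree twice except those on the $x$--$y$ geodesic which are used once, and a depth-first traversal attains this. For $g = \d_o^\s$ with $\s \neq e$, the Steiner tree reduces to $\{o\}$ and the formula predicts $0$; however $\d_o^\s \notin S_1$ (every $S_1$-letter has $\F = \pm 1$), while the factorization $\d_o^\s = (\d_o^\s \a \d_o^e)(\d_o^e \a^{-1} \d_o^e)$ exhibits it as a product of two $S_1$-letters, giving $|\d_o^\s|_{S_1} = 2$. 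The main obstacle is the upper-bound bookkeeping: one must verify that the $\a^{-1}$-induced constraints on the $\s_1^{(i)}$'s never conflict with the target portrait $g[\cdot]$, a compatibility that ultimately reduces to the identity $g^{-1}[o] = g[o.g^{-1}]^{-1}$ recorded in Fact \ref{record}.
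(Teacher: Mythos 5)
Your strategy is the same as the paper's: the lower bound comes from the observation that the inverted orbit of an $S_1$-word is a walk in $\T$ of length $n$ from $o$ to $o.g^{-1}$ whose vertex set contains $\supp(g)$, and the upper bound from decorating a canonical travelling salesman walk with permutations that either steer descents or set final portrait values; the tree-TSP formula and the case $\d_o^\s$ are handled as in the paper. However, your bookkeeping for the upper bound has a gap: the ``delicate case'' is not only $v_0=o$. At \emph{every} visited vertex $u$ that is a strict ancestor of $o.g^{-1}$, the last departure from $u$ is forced to be a descent (after the last visit the walk is confined to the subtree below the child it descends to, and it must end at $o.g^{-1}$), and the steering constraint bears on the portrait value at $u$ at the moment of that descent, which is its \emph{final} value at $u$. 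So the final value is forced into the coset $\{\rho\in\mathfrak{S}_q:\ c_u.\rho=0\}$, where $uc_u$ is the child of $u$ towards $o.g^{-1}$. The free factors $\s_2^{(j)}$ cannot help here, since they are all applied before the constrained departure factor; thus your claim that a free $\s_2^{(j)}$ always allows tuning the cumulative product at $u$ to $g[u]$ fails precisely at these vertices, and there may be many of them (whenever $o.g^{-1}$ lies deep below $o$, every intermediate vertex is of this type).

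The construction is nevertheless correct, because the required compatibility $c_u.g[u]=0$ holds automatically at every strict ancestor $u$ of $o.g^{-1}$, by the same computation you carry out at $o$: the automorphism $g$ maps the geodesic from $o.g^{-1}$ up to $u$ onto the geodesic from $o=(o.g^{-1}).g$ up to $u.g=\o(k)$ with $k=d_\T(u,o.g^{-1})$, and in the chosen labelling $o=\o(k)0\cdots0$, so the automata rule (\ref{automata}) gives $(uc_u).g=(u.g)(c_u.g[u])=\o(k-1)=\o(k)0$, forcing $c_u.g[u]=0$. Inserting this observation (stated for all strict ancestors of $o.g^{-1}$, not just $o$) closes the gap, and then your induction along the TSP walk coincides with the paper's proof, which likewise sets the value $g[v]$ at last visits and steers at the other visits (the paper relies on the same compatibility implicitly). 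A minor related imprecision: your displayed relation $o.g^{-1}=o\cdot g[o]^{-1}(0)$ is literally valid only when $d_\T(o,o.g^{-1})=1$; in general the correct statement is that the child of $o$ in the direction of $o.g^{-1}$ carries the label sent to $0$ by $g[o]$, i.e.\ $c.g[o]=0$.
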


The second equality is obtained by the well-known description of travelling salesman paths in trees. We could also express the word metric with respect to $S_0$ but it would be slightly less elegant.

\begin{proof}
By Fact \ref{record}, the inverted orbit of a word in the generators $S_1$ is a path in the tree $\T$. By (\ref{step}) when we multiply by a generator in $S_1$, we modify the portrait at the two endpoints of the corresponding edge in the path. So the length of a representative word of $g$ is at least the length of a path solution to a travelling salesman problem in $\T$ starting in $o$ ending in $o.g^{-1}$ and visiting all vertices in $\supp(g)$. This gives the lower bound.

There remains to give a representative word of $g$ of right length. For this, let us first describe a path solution to the travelling salesman problem. The edges of this path are precisely those of the tree $T(g)$. The edges located on the geodesic from $o$ to $o.g^{-1}$ are covered exactly once, and those not on this geodesic are covered exactly twice. We can define a word $w_n$ in $S_1$ with inverted orbit precisely this path, and such that the portrait of the prefix $w_i$ coincides with the portrait of $g$ restricted to the vertices that have been visited for the last time before time $i$.

Indeed, assume $w_i$ is given, we can obtain $w_{i+1}$ as follows.

If time $i$ is the last visit of the path to vertex $o.w_i^{-1}$, multiply by an element in $\d_o^{\mathfrak{S}_{q}}$ that ensures $w_{i+1}[o.w_i^{-1}]=g[o.w_i^{-1}]$. Then multiply by $\a$ or $\a^{-1}$ to reach the next point of the path.

If time $i$ is not the last visit of the path to vertex $o.w_i^{-1}$, multiply by an element in $\d_o^{\mathfrak{S}_{q}}$ that ensures $o.w_{i+1}^{-1}$ is the next vertex of the path (this is possible by Fact \ref{record}). Then multiply by $\a$ or $\a^{-1}$ to reach the next point of the path.

At final time $n$, multiply by an element in $\d_o^{\mathfrak{S}_{q}}$ that ensures $w_{n}[o.w_n^{-1}]=g[o.w_n^{-1}]$.
\end{proof}

\subsection{The Schreier graph of the action on $\T$}

\begin{figure}
	\begin{center}
	\includegraphics[width=145mm]{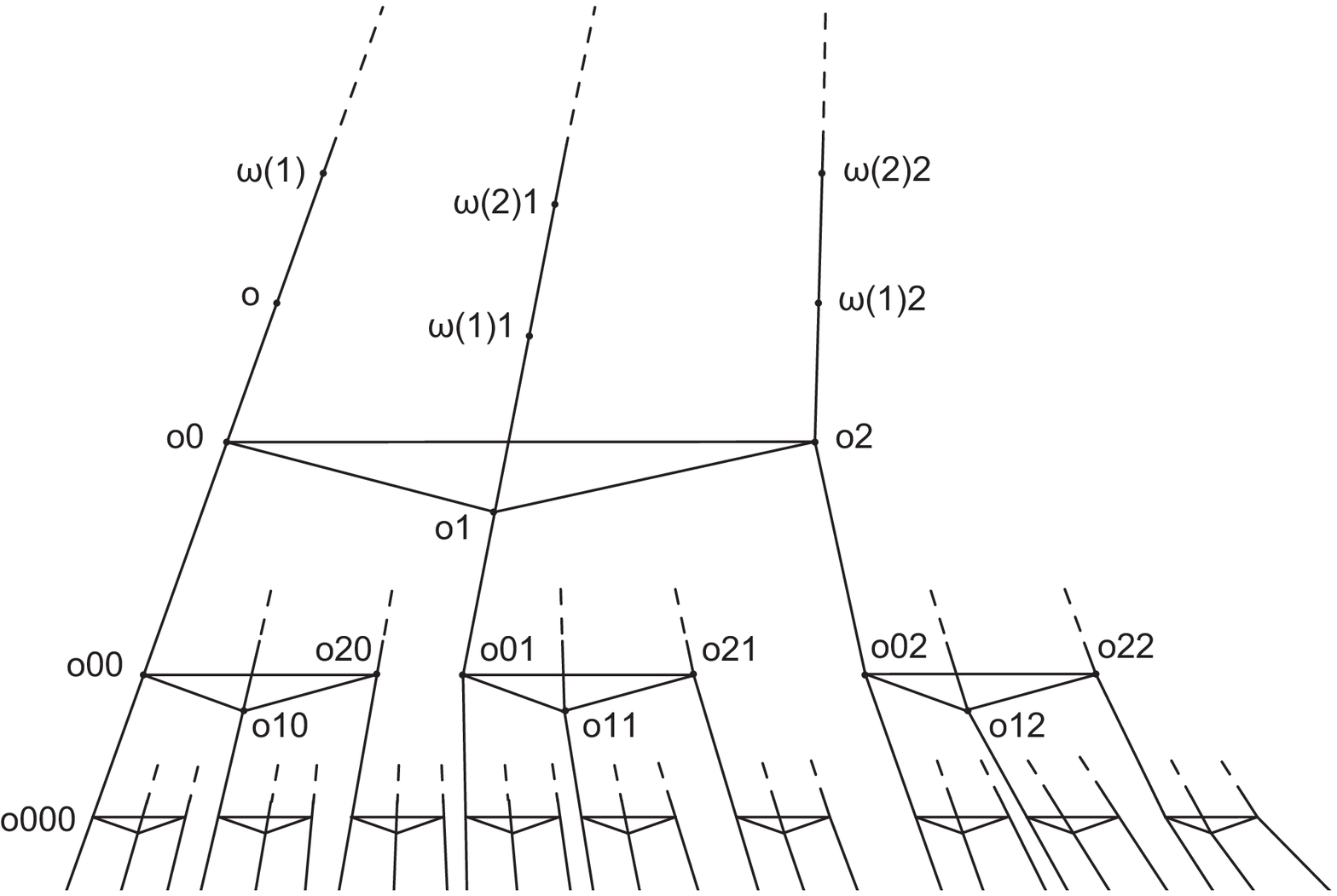}
	\end{center}
	\caption{Schreier graph $\Gc(\DA(\T), S_0, o)$ for $q=3$.}
	\label{Schreier}
\end{figure}

For future use, we describe the {\it Schreier graph} associated with the action of $\DA(\T)$ on $\T$ with respect to the generating set $S_0$ of Proposition \ref{gdeal} and the orbit of $o$. We denote it by $\Gc=\Gc(\DA(\T), S_0, o)$. The set of vertices is $\T=\{o.x \ : \ x \in \DA(\T)\}$, and the set of directed (labelled) edges is defined by $\{o.x, o.xs\}$ for some $s \in S_0$.
Both $\{o.x, o.xs\}$ and $\{o.xs, o.x\}$ are edges by the symmetry of $S_0$, and there are also loops. We draw a picture of $\Gc$ without loops and multiple edges in Figure \ref{Schreier}. We denote the graph distance by $d_\Gc$.

\begin{lemma}\label{distance}
For every $x \in \DA(\T)$, we have
$$
d_\T(o, o.x)\le d_\Gc(o, o.x) \le 2 d_\T(o, o.x).
$$
\end{lemma}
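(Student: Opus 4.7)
The plan is to track two statistics along Schreier paths: the horocycle $\b$ and a ``height'' $k(v)\ge 0$ defined so that $\omega(k(v))$ is the closest common ancestor of $o$ and $v$; these combine via $d_\T(o,v)=2k(v)+\b(v)$ for every vertex $v\in\T$.

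For the lower bound I would fix a shortest Schreier path $o=v_0,v_1,\ldots,v_n=o.x$ and let $a,b,c$ count the steps that use $\a$, $\a^{-1}$, and an element of $\d_o^{\mathfrak{S}_q}$ respectively, so that $n=a+b+c$. Since $\F$ is a homomorphism, $\b(o.x)=\F(x)=a-b$. A short case analysis, based on whether $v_i$ lies on the ray $\omega$, is a strict descendant of $o$, or sits in a sibling branch of some $\omega(j)$, shows that a single Schreier step changes $k$ by at most $+1$ when the generator is $\a^{-1}$, and by at most $0$ when the generator is $\a$ or $\d_o^\s$. Hence $k(o.x)\le b$, which gives
\[
d_\T(o,o.x)=2k(o.x)+\b(o.x)\le 2b+(a-b)=a+b\le n=d_\Gc(o,o.x).
\]

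For the upper bound I would exhibit an explicit Schreier path. If $v=o.x$ is a descendant of $o$, write $v=o s_1\ldots s_m$ and induct on $m$: a path of length $\le 2(m-1)$ reaches $o s_2\ldots s_m$, then $\a$ takes us to $o\,0\,s_2\ldots s_m$, and finally $\d_o^\s$ with any $\s$ sending $0$ to $s_1$ lands at $v$ in two additional moves, for a total $\le 2m=2d_\T(o,v)$. For a general vertex, write $v=\omega(k)s_1\ldots s_m$ with $s_1\ne 0$ whenever $k\ge 1$: first reach the descendant $o s_1\ldots s_m$ in $\le 2m$ steps by the previous construction, then apply $\a^{-1}$ exactly $k$ times, using the identity $\omega(j)s_1\ldots s_m.\a^{-1}=\omega(j+1)s_1\ldots s_m$ valid precisely because $s_1\ne 0$. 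This yields $2m+k\le 2(k+m)=2d_\T(o,v)$ steps in total.

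The main subtlety is that individual Schreier edges need not correspond to single tree edges; for instance $o1.\a=o01$ sits at tree-distance $3$ from $o1$, so neither inequality can be read off step by step. One must instead track the coupled behaviour of $(k,\b)$ under each generator, which is the content of the case analysis above. Once the explicit action of $\a^{\pm 1}$ and $\d_o^\s$ on each type of vertex is spelled out this analysis is routine, and I do not anticipate any genuinely hard step.
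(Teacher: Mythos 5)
Your proof is correct and follows essentially the same route as the paper: the lower bound by showing that only the $\a^{\pm1}$ letters of a Schreier word can change the position relative to $o$ (your $(k,\b)$ bookkeeping with $d_\T(o,v)=2k(v)+\b(v)$ is just an explicit version of the paper's count of $\{\a,\a^{-1}\}$ letters), and the upper bound by the same explicit word of length $2m+k$ --- two moves per edge for the descendant part, then $k$ applications of $\a^{-1}$, which is exactly what the paper obtains by translating the general case to the subtree $\T_o$ via $\a^{-l}$. Your bottom-up induction for descendants (shift, then switch the first letter) spells out a detail the paper leaves implicit, namely that $\a$ prepends a letter rather than appending one, but it is the same construction.
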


\begin{proof}
First note that in $\T$ we have $d_\T(o, o.x)=d_\T(o, o.x \s)$ for all $\s \in \d_o^{\mathfrak{S}_q}$.
For each $o.x \in \Gc$, let $w$ be a reduced word which realizes a directed path from $o$ to $o.x$ with $o.x=o.w$.
The number of $\{\a, \a^{-1}\}$ in $w$ bounds the number of steps from $o$ to $o.x$ in $\T$; this gives $d_\T(o, o.x) \le d_\Gc(o, o.x)$.

On the other hand, first let us consider those vertex $o.x$ in the subtree $\T_o$.
For a (unique) geodesic path from $o$ to $o.x$, each step in order to obtain $o.x$ we may apply a $\s \in \d_o^{\mathfrak{S}_q}$; 
hence $2$ times the number of steps gives the bound of $d_\Gc(o, o.x)$.
In general,
we note that a geodesic path from $o$ and $o.x$ in $\T$ is realized in such a way that there exists a subtree $\T_{o.\a^{-l}}$ for some $l \ge 0$, the path is a concatenation of the segment between $o$ and $o.\a^{-l}$, and the geodesic path between $o.\a^{-l}$ and $o.x$. 
The geodesic between $o.\a^{-l}$ and $o.x$ is the isomorphic image of the geodesic between $o$ and $o.x \a^l$ via $\a^{-l}$ in $\T$;
applying the argument used in above special case, we have $d_\Gc(o, o.x) \le 2 d_\T(o, o.x)$.
\end{proof}

\section{The Poisson boundary of the discrete affine group}\label{PB}

Let $\m$ be a probability measure on $\DA(\T)$. 
We consider the associated right random walk $w_n=x_1\dots x_n$, where the factors $x_i=\g_i\a^{m_i}$ form a sequence of independent random variables of law $\m$. We will always assume that $\m$ is {\it non-degenerate}, i.e.\ its support generates $\DA(\T)$ as a semigroup.

By Fact \ref{record}, understanding the random inverted orbit $O_n=\{o,o.w_1^{-1},\dots,o.w_n^{-1}\}$ is important to describe the behavior of the random walk. For a fixed $n$, the random word $w_n^{-1}=x_n^{-1}\dots x_1^{-1}=\check{w}_n$ coincides with the {\it left} random walk of law $\check{\m}$ given by $\check{\m}(g)=\m(g^{-1})$.

Both right and left random walks on the topological affine group of $\T$ have been studied by Cartwright, Kaimanovich and Woess \cite{CKW}. Note that as we use right actions on the tree, the roles of left and right random walks here are unfortunately exchanged  with their paper.

As already observed in \cite{CKW}, the inverted orbit $\{o.\check{w}_n\}$ is not a Markov process on $\T$, except in extremely particular case --- see Remark \ref{3.2LP}. On the contrary, the direct orbit $\{o.w_n\}$ is always a Markov chain, because the transitions depend only on space and not on time.

\subsection{Poisson boundary} We recall here basic facts about Poisson boundaries. We refer to \cite{KV}, \cite{Khyp}, \cite{Fur} or \cite[Chapter 14]{LPbook} for more informations.
The space of trajectories $\O:=\G^{\N}$ of a random walk of law $\m$ on a group $\G$ 
is endowed with 
the (left) diagonal action of the group $\G$ via each coordinate 
and
the probability measure $\Pr_x=\pi_\ast \m^{\N}$ obtained by pushing forward the Bernoulli measure $\m^{\N}$ on the space of increments by $\pi:(x_1,x_2,\dots)\mapsto(w_0, w_1,w_2,\dots)$ where $w_0=x$ corresponds to the starting point. 
Moreover the shift naturally acts on the space of trajectories by $\tau:(w_1,w_2,\dots)\mapsto(w_2,w_3,\dots)$. 
Note that the shift $\tau$ commutes with the group action.
Let $\Ic$ be the $\s$-field of shift-invariant events. 
The {\it Poisson boundary} of $(\G,\m)$ is the measurable space $(\O, \Ic)$ 
endowed with the family of probability measures $\{\Pr_x\}_{x \in \G}$.
Notice that arbitrary two measures $\Pr_x$ and $\Pr_y$ are mutually absolutely continuous since $\m$ is non-degenerate;
it follows that $L^\infty(\O, \Ic, \Pr_x)$ and $L^\infty(\O, \Ic, \Pr_y)$ are naturally isomorphic.
Let us denote by $\Pr:=\Pr_e$.

We call a function $f : \G \to \R$ is {\it $\m$-harmonic} 
if $\sum_{h \in \G} f(gh)\m(h)=f(g)$ for all $g$ in $\G$. 
Let $H^\infty(\G, \m)$ be the space of all bounded $\m$-harmonic functions on $\G$ endowed with $\ell^\infty$-norm.
One can check that the linear map from $L^\infty(\O, \Ic, \Pr)$ to $H^\infty(\G, \m)$ defined by
$$
\f \mapsto \int_{\O} \f(\o)d\Pr_x(\o),
$$
(where the right hand side is a function of $x$ in $\G$),
gives an isometric isomorphism as Banach spaces by the bounded martingale convergence theorem.

In practice, many explicit description of Poisson boundaries, e.g.\ \cite{KW}, \cite{BroAff}, \cite{BroSch}, \cite{Sava}, have been done using the ``Strip criterion'' due to Kaimanovich \cite{Khyp}. 
Indeed, the Strip criterion also works for a special random walk on $\DA(\T)$ (where one can adapt the methods in \cite{KW}).
In order to deal with a general random walk,
we use the following enhanced version of this criterion.

\begin{theorem}[Lyons-Peres Trap criterion \cite{LP} Corollary 2.3, and \cite{LPbook} Proposition 14.42]\label{Trap}
Let $\Ac$ be a $\G$-invariant sub-$\s$-field of $\Ic$. 
Assume that for every $\e>0$, there exists a random sequence of finite subsets $\{D_\ell\}_{\ell=1}^{\infty}$ of $\G$ and a constant $c>0$ such that
\begin{itemize}
\item[(i)] each $D_\ell$ is a (set-valued) measurable function with respect to $\Ac$,
\item[(ii)] almost surely there exists $N$ such that $| D_\ell| \le e^{\e \ell}$ for all $l\ge N$,
\item[(iii)] $\limsup_{\ell \to \infty}\Pr\left(\exists n \ge \ell, w_n \in D_\ell\right)\ge c$.
\end{itemize}
then $\Ac=\Ic$ modulo $\Pr$-null sets. In particular, $(\O, \Ac)$ gives a realization of the Poisson boundary.
\end{theorem}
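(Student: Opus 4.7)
The plan is to invoke the Kaimanovich conditional-entropy characterization of Poisson boundary quotients: a shift-invariant sub-$\s$-field $\Ac\subset\Ic$ coincides with $\Ic$ modulo $\Pr$-null sets if and only if
\[
\frac{1}{n} H(w_n\mid \Ac) \longrightarrow 0 \quad (n\to\infty),
\]
where $H$ denotes conditional Shannon entropy. Under this reduction, it suffices to show that for each fixed $\e>0$ one has $\limsup_n n^{-1} H(w_n\mid\Ac) \leq C\e$ for some absolute constant $C$, and then let $\e\to 0$.

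First I would exploit (i) and (ii) to turn the trap sets into an entropy bound. Fix $\e>0$ and take the corresponding random sequence $\{D_\ell\}$. Since $D_\ell$ is $\Ac$-measurable, the conditional distribution given $\Ac$ of any random variable that almost surely takes values in $D_\ell$ is supported on an effective alphabet of size at most $e^{\e\ell}$, yielding a conditional entropy bound of roughly $\e\ell+o(\ell)$ (the correction coming from the almost-sure event $|D_\ell|\leq e^{\e\ell}$ which holds eventually by (ii)). Condition (iii) supplies a natural candidate: on $B_\ell:=\{\exists n\geq\ell:\,w_n\in D_\ell\}$, which has probability at least $c$ for infinitely many $\ell$, the first visit time $\t_\ell := \min\{n\geq\ell:\,w_n\in D_\ell\}$ is finite and $w_{\t_\ell}\in D_\ell$, so $H(w_{\t_\ell}\mid \Ac, B_\ell) \leq \e\ell + o(\ell)$.

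Next I would translate the bound at the random time $\t_\ell$ to one at a deterministic time. Because the increments $w_n^{-1}w_{n+1}$ are i.i.d.\ with finitely supported law $\m$, the extra randomness of $w_n$ given $w_{\t_\ell}$ for $n$ slightly larger than $\t_\ell$ contributes at most $(n-\t_\ell)\log|\supp\m|$ bits, which is negligible at scale $\e\ell$ provided one can ensure $\t_\ell-\ell=o(\ell)$ along a selected subsequence. Passing to such a subsequence $\ell_k\uparrow\infty$ on which $B_{\ell_k}$ occurs with controlled probability yields $\limsup_k \ell_k^{-1}H(w_{\ell_k}\mid\Ac)\leq C\e$, and standard monotonicity/averaging for conditional entropy promotes this from a subsequence to all $n$.

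The hard step, and the point where Lyons--Peres genuinely refine Kaimanovich's Ray and Strip criteria, is the passage from the fixed positive-probability assumption (iii) to a bound on the \emph{unconditional} entropy $H(w_n\mid\Ac)$. A naive split on $B_\ell$ and its complement would cost $(1-c)\log|\supp\m^{\ast n}|\asymp n$ on the complement, wiping out the gain. The crucial idea is to use both the shift-invariance of $\Ac$ and the $\G$-invariance of the construction to synchronize the events $B_\ell$ via translates: by averaging along shifts and invoking the Hewitt--Savage $0$--$1$ law on the tail of the trajectory (whose applicability relies on the non-degeneracy of $\m$, which, as noted in the excerpt, makes the $\Pr_x$ mutually absolutely continuous), the constant $c$ can be boosted to probability one, effectively turning the ``trap'' into an inevitable one. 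Carrying this upgrade out cleanly, and pairing it with a cover-type estimate for the $\Ac$-conditional law of $w_n$, is where I expect the main technical burden of the proof to lie.
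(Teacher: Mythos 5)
First, a point of reference: the paper does not prove Theorem \ref{Trap} at all --- it is quoted from Lyons--Peres \cite{LP} (see also \cite{LPbook}), so your attempt can only be compared with the source's argument. Your entry point is the right one: Lyons and Peres do work with Kaimanovich's conditional entropy criterion, $\Ac=\Ic$ if and only if $\frac1n H(w_n\mid\Ac)\to 0$ (this needs $H(\m)<\infty$, harmless for finitely supported $\m$), and you correctly diagnose that a naive split along the trap event loses a term of order $(1-c)n$. But the two devices you substitute at the crucial points do not work. First, nothing in hypothesis (iii) allows you to arrange $\t_\ell-\ell=o(\ell)$, even along a subsequence: the visit to $D_\ell$ is only required at \emph{some} time $n\ge\ell$, which may be arbitrarily much larger than $\ell$, so the transfer from the random hitting time to a deterministic time near $\ell$ is unjustified (and the entropy of $\t_\ell$ itself is never accounted for). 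Second, and more seriously, the ``hard step'' cannot be done by boosting $c$ to $1$: the events $\{\exists n\ge\ell:\ w_n\in D_\ell\}$ are neither exchangeable nor tail events of the increment sequence (they involve the $\Ac$-measurable sets $D_\ell$ and the whole trajectory, which changes under modification of early increments), so the Hewitt--Savage $0$--$1$ law does not apply; the mutual absolute continuity of the $\Pr_x$ is irrelevant here; and the hypotheses simply do not force the trapping probability to tend to $1$ --- the entire strength of the criterion is that a uniform positive probability suffices.

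The argument in \cite{LP} runs, in essence, in the opposite direction, by contradiction, and needs neither a fixed-time reduction nor a probability-one upgrade. Set $h_\Ac:=\lim_n\frac1n H(w_n\mid\Ac)$ and suppose $h_\Ac>0$; apply the hypothesis with $\e<h_\Ac$. Conditionally on $\Ac$ the walk is a Markov chain (Doob transform), and by Kaimanovich's conditional Shannon--McMillan--Breiman theorem $-\frac1n\log \Pr[w_n=x\mid\Ac]\big|_{x=w_n}\to h_\Ac$ almost surely. Fix $\d>0$ with $\e<h_\Ac-\d$. For each fixed $x$ one trivially has $\Pr\bigl[w_n=x,\ \Pr(w_n=x\mid\Ac)\le e^{-n(h_\Ac-\d)}\bigm|\Ac\bigr]\le e^{-n(h_\Ac-\d)}$, so a union bound over the at most $e^{\e\ell}$ points of the ($\Ac$-measurably chosen, hence ``frozen'' under the conditional law) set $D_\ell$ and over all $n\ge\ell$ gives a bound $\sum_{n\ge\ell}e^{\e\ell}e^{-n(h_\Ac-\d)}\to0$, while the event that the SMB asymptotics fail at some time $n\ge\ell$ also has probability tending to $0$. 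Hence $\Pr[\exists n\ge\ell:\ w_n\in D_\ell]\to0$, contradicting (iii). So $h_\Ac=0$ and the entropy criterion concludes. In other words, the trap hypothesis is exploited not by making the trap inevitable, but by showing that if $\Ac$ were strictly smaller than $\Ic$, subexponentially small $\Ac$-measurable traps could only be hit with vanishing probability. Your sketch is missing this mechanism, and the steps offered in its place (deterministic-time transfer, Hewitt--Savage boosting) would fail.
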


The sets $D_l$ can be thought of as traps which can be defined in terms of $\Ac$ by (i), are small by (ii) and catch the random walk with positive probability by (iii).

\subsection{Description of the Poisson boundary: Main result}\label{section-Poisson}
Our aim here is to describe the Poisson boundary of $\DA(\T)$. 
Let us first assume that the step distribution $\m$ is finitely supported. 
Denote by $R$ the minimal integer such that for all $g \in \supp(\m)$,
\begin{eqnarray}\label{R}
\quad |\F(g)|\leq R, \quad \textrm{and} \quad g[v]=e \quad \textrm{when} \ d_\T(v,o)>R.
\end{eqnarray}
We assume that $\m$ is non-degenerate. 
In many cases we also assume that $\m$ is {\it aperiodic}, i.e.\ for any given group elements $g$ and $g'$ there exists a number $l \ge 1$ such that the support of the $l$-times convolution power of $\m$ contains both $g$ and $g'$.
If $\m$ is not aperiodic, then one may replace it by $\tilde{\m}:=(1/2)\m+(1/2)\d_e$, which is aperiodic and non-degenerate; 
moreover
$\m$-harmonic functions are always $\tilde{\m}$-harmonic functions and the converse is also true.

Cartwright, Kaimanovich and Woess observed that the sequence $\{o.\check{w}_n\}$ converges to an end $\partial\T$ almost surely. More precisely:

\begin{proposition}[A special case of Theorem 2 in \cite{CKW}]\label{CKWThm2}
Assume that $\m$ is non-degenerate on $\DA(\T)$, and consider the standard compactified topology in $\T\cup \partial \T$.
\begin{itemize}
\item [(i)] If $\E_{\check{\m}}\F >0$, then $\{o.\check{w}_n\}$ converges to a random element in $\partial\T\setminus\{\o\}$ almost surely.
\item [(ii)] If $\E_{\check{\m}}\F \le 0$, then $o.\check{w}_n \to \o$ almost surely.
\end{itemize}
\end{proposition}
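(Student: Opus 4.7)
The plan exploits the identity $\beta(o.\check w_n) = \Phi(\check w_n) = \sum_{i=1}^n \Phi(x_i^{-1})$, which expresses the Busemann coordinate of the inverted orbit as a sum of iid integer-valued random variables with bounded support (by the parameter $R$ from \eqref{R}) and mean $\E_{\check\mu}\Phi$. Applying the strong law of large numbers to this sum will control the long-term vertical behaviour. A second ingredient is the bounded-jump estimate: since $\check w_n$ acts as a tree isometry, $d_\T(o.\check w_{n+1}, o.\check w_n) = d_\T(o.x_{n+1}^{-1}, o) \le C(R)$ for a constant depending only on $R$.

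For case (ii) with $\E_{\check\mu}\Phi < 0$, SLLN gives $\Phi(\check w_n) \to -\infty$ almost surely. I would then analyse the confluent $c_n := c(o, o.\check w_n)$: as an ancestor of $o$ in the phylogenic tree rooted at $\o$, it lies on the ray $\o$, namely $c_n = \o(k_n)$ with $k_n \ge 0$. Being also an ancestor of $o.\check w_n$, it satisfies $-k_n = \beta(c_n) \le \beta(o.\check w_n) = \Phi(\check w_n)$, forcing $k_n \ge -\Phi(\check w_n) \to +\infty$. Since convergence to $\o$ in the compactified topology on $\T \cup \partial\T$ is exactly the condition $k_n \to \infty$, we conclude $o.\check w_n \to \o$.

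For case (i), SLLN gives $\Phi(\check w_n) \to +\infty$ almost surely. Fix $K \in \Z$. Once $n$ is large enough that $\Phi(\check w_m) > K + C(R)$ for all $m \ge n$, the bounded-jump estimate combined with the tree identity $\beta(c(v, v')) = (\beta(v) + \beta(v') - d_\T(v, v'))/2$ forces the confluent of any two consecutive positions $o.\check w_m, o.\check w_{m+1}$ to have Busemann value strictly greater than $K$. Hence the walk cannot cross the horocycle $H_K$ upward after time $n$, so its $K$-th ancestor stabilises to a vertex $a_K^\infty \in H_K$. Letting $K \to +\infty$, the nested sequence $(a_K^\infty)_K$ defines a descending ray in $\T$, which determines a random end $\x \in \partial\T \setminus \{\o\}$ such that $o.\check w_n \to \x$.

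The main technical obstacle I anticipate is the critical subcase $\E_{\check\mu}\Phi = 0$ of (ii). Recurrence of centered bounded random walks only yields $\liminf \Phi(\check w_n) = -\infty$ rather than $\Phi(\check w_n) \to -\infty$, so the direct confluent bound $k_n \ge -\Phi(\check w_n)$ is insufficient to conclude $k_n \to \infty$ along the full sequence. Completing the argument here would require either a finer monotonicity analysis of the confluent along record-minimum times of $\Phi(\check w_n)$, or an adaptation of the original CKW proof (which leverages spread-out hypotheses) to the finitely-supported setting on $\DA(\T)$.
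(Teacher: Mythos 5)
Your treatment of the two drifted cases is correct and elementary: the identity $\b(o.\check w_n)=\F(\check w_n)$, the SLLN for the bounded increments $\F(x_i^{-1})$, the bounded-jump estimate $d_\T(o.\check w_{n+1},o.\check w_n)=d_\T(o.x_{n+1}^{-1},o)$, and the confluent computations all work as you describe, and they give (i) and the strictly negative part of (ii). But the case $\E_{\check\m}\F=0$, which you explicitly leave open, is not a technical footnote: it is the heart of the proposition, and your proposed repairs (record-minimum times of $\F(\check w_n)$, finer monotonicity of the confluent) cannot close it, because any argument that only uses the vertical process and tree confluents never invokes non-degeneracy of $\m$ --- and the zero-drift statement is false without it. For instance, if $\m$ is uniform on $\{\a,\a^{-1}\}$ (degenerate), then $o.\check w_n$ is a recurrent walk on the bi-infinite line $\{\o(k)\}_{k\ge 0}\cup\{o0\cdots0\}$ and does not converge to $\o$. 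So some genuinely horizontal input, using that the walk spreads into the $q$-ary branching, is mandatory in the critical case.

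This is exactly what the paper supplies. Its official proof is the citation to \cite{CKW}, whose mechanism (recalled in Remark \ref{GKR}) is the Guivarc'h--Keane--Roynette transience theorem: since $\m$ is non-degenerate and $\DA(\T)$ is dense in $\Aff(\T)$ (Proposition \ref{gdeal}), the closed group generated by $\supp\m$ is the non-unimodular group $\Aff(\T)$, so the walk leaves every compact set; as vertex stabilizers in $\Aff(\T)$ are compact, $d_\T(o,o.\check w_n)\to\infty$, and combined with recurrence of the vertical projection this forces $o.\check w_n\to\o$ in the zero-drift case. The paper also sketches an internal alternative (Remark \ref{GKR}): transience of the forward-orbit Markov chain on the Schreier graph $\Gc$ (Lemma \ref{transience}, via Rayleigh monotonicity, since $\Gc$ contains a copy of the transient tree $\T_o$), upgraded by the strong Markov property to the quantitative non-return estimate of Lemma \ref{mainlemma}, and then a Borel--Cantelli argument (Lemma \ref{accumulation}) producing infinitely many levels $\ell_k$ past which the trajectory never re-enters $T_{\ell_k}$; the intersection of the $\T\setminus T_{\ell_k}$ pins down the limit end. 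To complete your write-up you would need to import one of these two ingredients (or an equivalent transience statement for the induced walk) rather than refine the one-dimensional analysis of $\F(\check w_n)$.
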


As the group $\DA(\T)$ acts by automorphisms of the tree $\T$, we always have $d_\T(o,v)=d_\T(o.\check{w}_n,v.\check{w}_n)$. 
Combined with (\ref{portrait}) and (\ref{R}), It follows by (\ref{portrait}) and (\ref{R}) that for a fixed $v$ in $\T$ the sequence $w_n[v]$ changes values only when $d_\T(o.\check{w}_n,v)\leq R$, so is eventually constant by Proposition \ref{CKWThm2}. Denote $\g_\infty[v] \in \mathfrak{S}_q$ its limit.
This means that the sequence of portraits converges almost surely pointwise to a final configuration given by a Borel measurable map
$$\g_\infty :\Omega \rightarrow \prod_\T \mathfrak{S}_q.$$
Let $\Ac=\s(\g_\infty)$ be the sub-$\s$-field of $\Ic$ generated by final configurations. 
The map $\g_\infty$ is $\G$-equivariant with respect to a natural $\G$-action on $\prod_\T \mathfrak{S}_q$.
We will show that the hypothesis of Lyons-Peres Trap criterion stated in Theorem \ref{Trap} apply to $\Ac$ and deduce the following more precise version of Theorem \ref{main}.

\begin{theorem}\label{Poisson=config}
Assume that $\m$ is a finitely supported, non-degenerate probability measure on $\DA(\T)$. Then the space $\prod_\T \mathfrak{S}_q$ of final configurations with the distribution $\g_{\infty\ast}\Pr$ is a realization of the Poisson boundary of $(\DA(\T),\m)$.
\end{theorem}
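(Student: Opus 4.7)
The plan is to apply the Lyons--Peres Trap criterion (Theorem \ref{Trap}) to the sub-$\s$-field $\Ac := \s(\g_\infty)$.

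I would first verify that $\Ac$ is a $\G$-invariant sub-$\s$-field of $\Ic$. Shift-invariance is immediate: $\g_\infty$ is a pointwise limit, so removing finitely many initial trajectory entries does not change it. Group-invariance follows from the $\G$-equivariance of $\g_\infty$ noted before the statement, itself a consequence of the composition rule (\ref{portrait}) passed to the limit.

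The technical core is the construction of the trap sets. By Proposition \ref{CKWThm2}, the inverted orbit $o.\check{w}_n$ converges a.s.\ to a random end $\x\in\partial\T$, and $\x$ is $\Ac$-measurable since it is the asymptotic direction of $\supp(\g_\infty)$ (by Fact \ref{record} and (\ref{step}) new portrait values are only ever added at the current tip of $O_n$, so the limit support traces the inverted orbit up to cancellations that leave the limit end intact). Given $\e>0$, I would define
$$
D_\ell := \bigl\{ g\in\DA(\T) :\, g[v]=\g_\infty[v]\ \forall v\in T_\ell\setminus U_\ell,\ \supp(g)\subset T_\ell,\ |\F(g)|\leq N\ell \bigr\},
$$
where $T_\ell$ is an $\Ac$-measurable finite set---e.g.\ the $R$-thickening of a thin tube around the geodesic $[o,\x)$ intersected with $B_\T(o,N\ell)$---large enough to contain the portrait support of $w_\ell$ on a positive-probability event, and $U_\ell\subset T_\ell$ is a small $\Ac$-measurable ``uncertainty region'' absorbing the piece of $T_\ell$ where $w_\ell[v]$ can still differ from $\g_\infty[v]$, essentially a predicted future-orbit segment running from the tip-depth $\approx\ell\cdot(\text{speed})$ out to depth $N\ell$ along $[o,\x)$.

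Condition (i) of Theorem \ref{Trap} holds by construction. Condition (ii), $|D_\ell|\leq (q!)^{|U_\ell|}(2N\ell+1)\leq e^{\e\ell}$, follows once $|U_\ell|\log(q!)\leq\e\ell$, which is achievable by choosing $N$ close enough to the escape speed so that $|U_\ell|\sim(N-\text{speed})\ell$ is as small as needed. Condition (iii), $\Pr(\exists n\geq\ell,\ w_n\in D_\ell)\geq c>0$ uniformly in $\ell$, is obtained at $n=\ell$ by combining three ingredients: (a) on a positive-probability event the whole inverted orbit stays in the tube defining $T_\ell$ (by tightness of its maximal deviation from $[o,\x)$), hence $\supp(w_\ell)\subset T_\ell$; (b) by transience of $o.\check{w}_n$ (Proposition \ref{CKWThm2}) and Borel--Cantelli, on $T_\ell\setminus U_\ell$ the portrait of $w_\ell$ has already stabilized to $\g_\infty$; (c) $|\F(w_\ell)|\leq R\ell\leq N\ell$ is automatic from (\ref{R}).

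The main obstacle is the quantitative input used in (iii): tightness of the transverse deviation of the inverted orbit from $[o,\x)$ together with concentration of its tip-depth around $\ell\cdot(\text{speed})$, both needed uniformly for every $\e>0$. This is tractable in the drift regime $\E_\m\F\neq 0$ via linear escape and Gaussian fluctuations; in the zero-drift case $\E_\m\F=0$, where $\x=\o$ and the tip-depth is only diffusive, the tube argument must be supplemented with renewal-type estimates in the spirit of \cite{CKW, BroRenewal} to control the transverse fluctuations of the orbit around the ray $\o$.
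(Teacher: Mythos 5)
Your overall strategy (Trap criterion applied to $\Ac=\s(\g_\infty)$) is the paper's, but the proposal has a genuine gap at the place where all the work lies: condition (iii) requires a lower bound, \emph{uniform in $\ell$}, on the probability that the portrait of $w_n$ already coincides with $\g_\infty$ away from the current location. Your ingredient (b) invokes qualitative transience (Proposition \ref{CKWThm2}) plus Borel--Cantelli, which only yields almost-sure eventual stabilization and cannot produce a time-uniform positive probability that the \emph{future} of the inverted orbit never returns within distance $R$ of the already-built support. The inverted orbit $o.\check w_n$ is not a Markov chain on $\T$, so this cannot be fixed by a naive Markov-type escape estimate; the paper's substitute is Lemma \ref{mainlemma}, whose proof uses the strong Markov property of the increments, the identity that $\{\check w_{\t_\ell}w_{n+\t_\ell}\}_n$ is a fresh walk, the distance comparison of Lemma \ref{distance}, and transience of the \emph{forward} orbit on the Schreier graph (Lemma \ref{transience}). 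Nothing in your proposal plays this role, and you explicitly defer the zero-drift case to unspecified ``renewal-type estimates,'' which is precisely the case where the projection to $\Z$ is recurrent and the difficulty is concentrated. Moreover your ingredient (a) is false as stated: the maximal deviation of the inverted orbit from the limit geodesic up to time $\ell$ grows (of order $\log\ell$ even with drift, cf.\ Proposition \ref{geodesic-tracking}), so the probability of confinement to a fixed thin tube for all times up to $\ell$ tends to $0$, and the requirement $\supp(w_\ell)\subset T_\ell$ in your trap cannot hold with probability bounded below.

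A second, smaller gap is the $\Ac$-measurability of the limit end $\x$. You assert that $\supp(\g_\infty)$ ``traces the inverted orbit up to cancellations that leave the limit end intact,'' but this is exactly what must be proved: a priori the final configuration could be finite or even trivial, in which case $\x$ is not a function of $\g_\infty$ and your $T_\ell$, $U_\ell$ are not $\Ac$-measurable. The paper's Lemma \ref{accumulation} handles this by combining non-degeneracy and aperiodicity (to insert, at crossing times, increments $\a^{R+1}$ or $\s\a^{R+1}$ that force a nontrivial entry) with the uniform non-return estimate of Lemma \ref{mainlemma}, showing $\supp(\g_\infty)$ is a.s.\ infinite with unique accumulation point $\lim o.\check w_n$. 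Finally, note that the paper's traps are chosen at the first crossing time $\t_{c_1\ell}$ of the horocycle $H_{\pm c_1\ell}$ and pin the portrait only outside an $R$-ball around the crossing point (trivial on the far side, equal to $\g_\infty$ on the near side), so $|D_\ell|$ is bounded by a constant and no tube confinement, tip-depth concentration, or Gaussian fluctuation input is needed; your construction imports these extra quantitative requirements and then cannot verify them.
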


This statement is completely analogous of what happens for lamplighter groups. The Trap criterion was designed in \cite{LP} to show that final configurations realize the Poisson boundary for random walks in lamplighter groups.

\begin{remark}\label{3.2LP} In her thesis \cite[Section 5.3]{SavaThesis}, Sava introduced some lamplighter random walks on the {\it graph} 
$\Z_2 \wr \T$ and identified their Poisson boundaries as spaces of lamp configurations. Random walks in the group $\DA(\T)$ resemble very much random walks in the graph $\Z_2 \wr \T$, as the portrait corresponds to the lamp configuration.

Sava's results were improved by Lyons and Peres \cite[Theorem 3.2]{LP}. Our proof will follow their ideas. A major difference in our case is that the random trajectory $\{o.\check{w}\}_n$ induced on $\T$ is usually {\it not} a Markov process. It has some memory encoded in the portrait. 

A particular case where this process is actually Markov is obtained when $\m$ is the equidistribution measure on the set $S_1=\d_o^{\mathfrak{S}_{q}}\{\a^{\pm1}\}\d_o^{\mathfrak{S}_{q}}$. Indeed we see by Fact \ref{record} that the entry $w_n[o.\check{w}_n]$ is completely randomized before each move by $\a^{-1}$. This is no longer true for general step distribution, even when $\m$ is symmetric, i.e., $\m=\check \m$.
\end{remark}

\subsection{Proof of Theorems \ref{main} and \ref{Poisson=config}}\label{mainproof}

The measure $\m$ on $\DA(\T)$ induces a random walk on $\Gc$ given by the forward orbit $\{o.w_n\}$. This is a Markov chain on $\Gc$ with transition probability $p(x, y)=\m(\{s: x.s=y\})$ for $x$, $y \in \Gc$.

\begin{lemma}\label{transience}
For $\m$ on $\DA(\T)$ as in Theorem \ref{Poisson=config}, the random walk induced by $\m$ on the Schreier graph $\Gc=\Gc(\DA(\T), S_0, o)$ is transient.

In particular, if $B_\Gc(o, R)$is the ball of radius $R$ centered at $o$ in $\Gc$, there exist $n_0$ and $c_R>0$ such that 
$$
\Pr(\forall n \ge n_0, o.w_n \notin B_\Gc(o, R))\ge c_R>0.
$$
\end{lemma}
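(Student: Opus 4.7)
The induced walk on $\mathcal{G}$ is the Markov chain $\eta_n := o.w_n$ on the common vertex set $\T$. My plan is to show transience by analyzing the vertical projection through the Busemann function, and then to obtain the ball-escape statement by monotone continuity of probability.

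Since $\Phi : \DA(\T) \to \Z$ is a homomorphism and $\beta(v.g) - \beta(v) = \Phi(g)$ for every $v \in \T$ and $g \in \Aff(\T)$, the vertical coordinate $\beta(\eta_n) = \sum_{i=1}^{n} \Phi(X_i)$ is a bounded-step one-dimensional random walk with step distribution $\Phi_\ast\mu$. When $\E_\mu \Phi \neq 0$, the strong law of large numbers forces $|\beta(\eta_n)| \to \infty$ almost surely. Since any geodesic in $\T$ from $o$ to $v$ must traverse at least $|\beta(v)|$ horocycles, one has $d_\T(o, \eta_n) \ge |\beta(\eta_n)| \to \infty$, and by Lemma \ref{distance} also $d_\Gc(o, \eta_n) \to \infty$; so the chain visits any ball $B_\Gc(o, R)$ only finitely often.

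The remaining case $\E_\mu \Phi = 0$ is the main obstacle, since the Busemann walk is then recurrent on $\Z$ and returns to $H_0$ infinitely often. Here I would exploit the horizontal spreading. Non-degeneracy of $\mu$ ensures that its support is not contained in $\langle\alpha\rangle$, so the walk performs nontrivial horocyclic moves with positive probability per step. Combining the local central limit bound $\Pr(\beta(\eta_n) = k) = O(n^{-1/2})$ with a horocyclic spreading estimate --- reflecting that the conditional distribution of $\eta_n$ on each horocycle is supported on an exponentially growing set of sites, thanks to the tree structure of $\Gc$ recorded in Lemma \ref{distance} --- yields $\sum_n \Pr(\eta_n = v) < \infty$ for every fixed $v$, whence transience follows by the Borel--Cantelli lemma. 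Making this spreading estimate rigorous is the delicate point: it requires controlling how fast the successive permutations carried along the inverted orbit (via Fact \ref{record}) disperse the position of $\eta_n$ across siblings at each level, and relies on the fact that the horocycle $H_0$, equipped with the metric inherited from $\T$, carries an ultrametric ``Cantor'' structure of exponential branching.

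Finally, transience gives $\Pr(\eta_n \in B_\Gc(o, R) \text{ infinitely often}) = 0$, so the event $A_R = \bigcup_N \{\eta_n \notin B_\Gc(o, R) \text{ for all } n \ge N\}$ has probability $1$. By monotone continuity of probability we may choose $n_0 = n_0(R)$ large enough that $\Pr(\eta_n \notin B_\Gc(o, R) \text{ for all } n \ge n_0) \ge c_R$ for any desired $c_R \in (0, 1)$.
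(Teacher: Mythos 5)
Your reduction to the case $\E_\mu\Phi=0$ is where the lemma actually lives, and that is precisely the case your proposal does not prove. The drifted case and the final deduction of the ball-escape statement from transience are fine, but for zero vertical drift you only sketch a heat-kernel scheme: a local CLT bound $\Pr(\beta(\eta_n)=k)=O(n^{-1/2})$ plus an unproven ``horocyclic spreading estimate''. Note that Borel--Cantelli needs $\sum_n\Pr(\eta_n=v)<\infty$, i.e.\ point probabilities decaying summably, so on top of the $n^{-1/2}$ vertical factor you would need the conditional law of $\eta_n$ on its horocycle to have maximal atom $o(n^{-1/2-\epsilon})$; exponential growth of horocycles alone does not give this, because the conditional distribution could in principle concentrate (the horizontal displacement is driven by the excursion structure of the vertical walk and by the permutations accumulated along the orbit, exactly the part you flag as ``delicate'' and leave open). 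So the key analytic input is missing, and it is not clear the proposed route is the efficient way to obtain it, especially since the induced chain need not be reversible (symmetry of $\mu$ is not assumed), which rules out the most naive spectral or Dirichlet-form shortcuts.

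The paper's proof avoids all of this and treats every drift case at once by an electrical-network argument: view $\Gc$ as a unit-conductance network, contract the edges coming from $\d_o^{\mathfrak{S}_q}$, and observe that the contracted network contains a copy of the transient tree $\T_o$; Rayleigh monotonicity then gives transience of the simple random walk on $\Gc$, and a comparison theorem for irreducible chains whose transition probabilities along edges are uniformly bounded below (Woess, Theorems 2.25 and 3.1), which does not require reversibility, transfers transience to the chain induced by $\mu$. If you want to keep your structure, the honest fix is to replace your zero-drift heat-kernel sketch by such a flow/comparison argument (e.g.\ exhibit a finite-energy flow from $o$ to infinity in $\Gc$ using the embedded subtree); as written, the central case of the lemma is a gap, not a proof.
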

\begin{proof}
We regard the Schreier graph $\Gc$ as a network endowed with conductance $1$ on each edge.
We use reduction of the network by contracting edges corresponding to $\s \in \d_o^{\mathfrak{S}_q}$, 
and observe that the reduced network contains a copy of $\T_o$, which is transient.
The Rayleigh monotonicity implies that the simple random walk on $\Gc$ is transient as well (e.g.\ \cite[Chapter 2]{LPbook}).
For the network endowed with transition probability $p(x, y)$, which is uniformly bounded away from $0$, but need not be reversible,
the random walk $\{o.w_n\}$ is still transient by \cite[Theorem 2.25 and Theorem 3.1]{Woess} . 
Here $n_0$ is the smallest number of steps to get out from the ball $B_\Gc(o, R)$.
\end{proof}

Assume $\E_{\check{\m}}\F>0$ (resp.\ $\E_{\check{\m}}\F\leq 0$), let us define $\tau_\ell$ for $\ell> 0$ as the first time when the random trajectory $o.\check{w}_n$ hits or crosses the horocycle $H_\ell$ (resp.\ $H_{-\ell}$). This coincides with the first time $\F(w_n)$ hits or crosses $\ell$ (resp.\ $-\ell$), thus is almost surely well defined. 

Removing the point $o.\check{w}_n$ splits $\T$ into $q+1$ subtrees. We denote $T_\ell$ the subtree containing $o$ and all other previous points of the inverted orbit. The following lemma asserts that the random trajectory $\{o.\check{w}_n\}$ does not come back to $T_\ell$ with positive probability. This is a quantitative form of transience.

\begin{lemma}\label{mainlemma}
There exists a constant $c$ depending only on $\m$ such that for all $\ell>0$,
\[
\quad\Pr(\forall n \geq \tau_\ell, o.\check{w}_n \notin T_\ell \ | \ \Fc_{\t_l}) \geq c>0,
\]
where $\Fc_{\t_l}$ is the $\s$-algebra generated by sequence $\{\check w_i\}_{i \le \t_l}$ up to $\t_l$.
In particular, for all $\ell>0$,
\[
\quad\Pr(\forall n \geq \tau_\ell, o.\check{w}_n \notin T_\ell) \geq c>0.
\]
\end{lemma}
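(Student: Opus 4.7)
The plan is to apply the strong Markov property of $\check w_n$ at $\tau_\ell$, which decouples the future from $\Fc_{\tau_\ell}$, and then to exhibit an explicit positive-probability event on which the post-$\tau_\ell$ trajectory avoids the relevant subtree forever. First I would write $\check w_{\tau_\ell+k} = u_k \check w_{\tau_\ell}$ with $u_k := y_k^{-1} \cdots y_1^{-1}$ and $y_j := x_{\tau_\ell+j}$, so that $\{u_k\}_{k \geq 0}$ is a fresh left random walk of law $\check\mu$ independent of $\Fc_{\tau_\ell}$. Since $o.\check w_{\tau_\ell + k} = (o.u_k).\check w_{\tau_\ell}$ and $(o.\check w_{\tau_\ell}).\check w_{\tau_\ell}^{-1} = o$, the conditional event becomes $\{o.u_k \notin T'_\ell\ \forall k \geq 0\}$, where $T'_\ell := T_\ell.\check w_{\tau_\ell}^{-1}$ is one of the $q+1$ connected components of $\T \setminus \{o\}$. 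Decomposing $\check w_{\tau_\ell} = \gamma \alpha^m$ with $m = \F(\check w_{\tau_\ell})$ and using that the $|m|$-th ancestor of every vertex at level $0$ is $\omega(|m|)$ (provided $|m|\ge 2$), one checks: in case $1$, $T'_\ell = P_o$, the parent component of $\T\setminus\{o\}$; in case $2$, $o.\check w_{\tau_\ell} = \omega(|m|)$ lies on the $\o$-ray and $T'_\ell = \T_{o_{i_0}}$ is a child subtree of $o$.

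It thus suffices to establish a positive lower bound on $\Pr(o.u_k \notin C\ \forall k \geq 0)$, uniform as $C$ ranges over the $q+1$ components of $\T \setminus \{o\}$. Let $D$ be an upper bound on $d_\T(v, v.s)$ for $v \in \T$ and $s \in \supp(\check\mu)$, finite by finiteness of the support and the portrait restriction (\ref{R}). By non-degeneracy of $\mu$, there is a fixed $K$ and a sequence $\bar y_1, \dots, \bar y_K \in \supp(\mu)$ such that the deterministic partial inverted orbit $o.(\bar y_k^{-1}\cdots\bar y_1^{-1})$ for $k = 1, \dots, K$ stays in $\T \setminus C$ and reaches, at time $K$, a vertex at tree-distance $> D$ from the unique ``frontier'' vertex of $C$ in $\T$ (namely $o$ itself if $C = P_o$, or $o_{i_0}$ if $C = \T_{o_{i_0}}$). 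On the event that the first $K$ increments agree with this sequence, of probability at least $\prod_i \mu(\bar y_i) > 0$, the walk is at such a safe position at time $K$.

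From the safe position, strong Markov applied at time $K$ produces a fresh left random walk of law $\check\mu$; the bounded displacement $D$ ensures that as long as the walk stays at tree-distance $> D$ from the frontier of $C$, it cannot step into $C$. In case 1 the positive drift of $\F(u_{K+k}) - \F(u_K)$ yields positive probability that this distance is preserved for all future times. In case 2 the same follows under strict negative drift, and in the critical case $\E_{\check\mu}\F = 0$ the almost-sure convergence $o.\check w_n \to \o$ of Proposition \ref{CKWThm2}(ii), combined with a tail estimate on the last visit of the walk to a bounded $\T$-neighborhood of $o_{i_0}$, yields the same conclusion. Multiplying the positive lower bounds for the two time intervals produces the required uniform constant $c > 0$. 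The main technical obstacle I anticipate is the critical case $\E_{\check\mu}\F = 0$, where the recurrent Busemann random walk on $\Z$ is useless for bounding tree-distance, and one must extract quantitative information purely from the qualitative convergence $o.\check w_n \to \o$.
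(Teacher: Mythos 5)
The decisive gap is the critical case $\E_{\check\m}\F=0$, which you yourself flag as the anticipated obstacle: there your argument consists of citing the qualitative convergence $o.\check w_n\to\omega$ from Proposition \ref{CKWThm2}(ii) together with an unspecified ``tail estimate on the last visit'' to a ball around the frontier vertex. But the entire content of the lemma is concentrated exactly in this case. Since the induced process $\{o.\check w_n\}$ on $\T$ is \emph{not} a Markov chain (Remark \ref{3.2LP}), the almost sure finiteness of the number of visits to a fixed ball does not upgrade to a uniform positive lower bound, conditionally on the past at your ``safe'' time $K$, for never visiting that ball: the standard argument ``transient chain escapes any state with positive probability'' is unavailable, and the projection $\F(\check w_n)$ is recurrent, so no drift can be used either. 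You give no mechanism producing the tail estimate, so the zero-drift case is not proved. The paper supplies precisely the missing quantitative ingredient: writing $w_n'=\check w_{\tau_\ell}w_{\tau_\ell+n}$ (a fresh walk by the strong Markov property), one has $d_\T(o.\check w_{\tau_\ell},o.\check w_{\tau_\ell+n})=d_\T(o,o.w_n')\ge \tfrac12\, d_\Gc(o,o.w_n')$ by Lemma \ref{distance}, and $\{o.w_n'\}$ \emph{is} a Markov chain on the Schreier graph $\Gc$, whose transience (Lemma \ref{transience}) yields the uniform escape constant $c_{2R}$; an alternative quantitative route is the stochastic domination of the projection onto the ray sketched in Remark \ref{coupling}. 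Without one of these (or an equivalent quantitative transience statement for the inverted orbit) your proof only covers the drifted cases, which the paper already notes are the obvious ones.

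A secondary problem lies in your identification of $T_\ell'$ in case 2. The assertion that the $|m|$-th ancestor of every vertex of $H_0$ is $\omega(|m|)$ is false: it holds only for descendants of $\omega(|m|)$, and for a general finitely supported step distribution (which may contain ``lateral'' moves) the first crossing of $H_{-\ell}$ need not occur at an ancestor of $o$, so $T_\ell'$ is not automatically a child subtree of $o$. Conversely, your stated reduction to a bound ``uniform over the $q+1$ components'' overshoots: when $\E_{\check\m}\F\le 0$ the probability of forever avoiding the parent component of $\T\setminus\{o\}$ is $0$, since $o.\check w_n\to\omega$. So the reduction must keep track of which component actually arises in which drift regime, and the on-ray claim in the critical case needs a genuine argument (for instance, confinement of the pre-$\tau_\ell$ trajectory to the subtree hanging below a ray vertex, which requires control of lateral jumps near the bottom level); the paper instead avoids pinning down the crossing vertex and argues with the distance to $o.\check w_{\tau_\ell}$, ruling out entrapment in $T_\ell$ via recurrence of $\F(w_n)$. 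As written, your case-2 setup asserts an incorrect tree fact and, in the configurations it excludes, would be attempting to bound from below an event of probability zero.
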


This lemma is obvious when $\E_{\check{\m}}\F\neq 0$ for in this case we can use transience of the projected random walk $\b(o.\check{w}_n)=-\F(w_n)$ on the integers, which is finitely supported and drifted. 

\begin{proof}[Proof of Lemma \ref{mainlemma}]
Recall that $\t_\ell$ is the first time when the random trajectory $o.\check w_n$ hits or crosses the horocycle $H_{-\ell}$.
Then $\{\check w_{\t_\ell}w_{n+\t_\ell}\}_{n \ge 0}$ has the same distribution as  $\{w_n\}_{n \ge 0}$ by the strong Markov property. Let $R$ be as in (\ref{R}).
If $d_\Gc(o, o.\check w_{\t_\ell}w_{n+\t_\ell}) \ge 2R$, then $d_\T(o, o.\check w_{\t_\ell}w_{n+\t_\ell}) \ge R$ by Lemma \ref{distance}, 
and so $d_\T(o.\check w_{\t_\ell}, o.\check w_{n+\t_\ell}) \ge R$.
Therefore
if $o.w_n \notin B_\Gc(o, 2R)$ for all $n \ge \t_\ell$, then $o.\check{w}_n \notin B_\T(o,  R)$ for all $n \ge \t_\ell$.

In fact, the last condition implies that $o.\check{w}_n \notin T_\ell$ for all $n \ge \t_\ell$. Indeed, by definition of $R$, the condition implies that either all $o.\check{w}_n$ for $n \ge \t_\ell$ are in $T_\ell$, which cannot happen because the projection $\{\F(w_n)\}$ is recurrent on integers, or none.

By Lemma \ref{transience}, we obtain
\begin{eqnarray}\label{trans}
\Pr(\forall n \ge \t_\ell+n_0, o.\check{w}_n \notin T_\ell \ | \ \Fc_{\t_l}) 
\ge \Pr(\forall n \ge n_0, o.w_n \notin B_\Gc(o, 2R))\ge c_{2R}>0.			
\end{eqnarray}

To get Lemma \ref{mainlemma}, observe that for any $\ell'\geq \ell$, the probability that $o.\check{w}_n$ does not belong to $T_\ell$ for $\t_\ell\leq n \leq \t_{\ell'}$ is bounded below by a constant $c_1>0$. Choosing $\ell'=n_0(R+1)+\ell$ guarantees that $o.\check{w}_n$ does not belong to $T_\ell$ for $\t_{\ell'}\leq n \leq \t_{\ell'}+n_0$.
Lemma \ref{mainlemma} holds with $c=c_1c_{2R}>0$.
\end{proof}

\begin{lemma}\label{accumulation}
Assume that $\m$ is aperiodic.
The limit $\lim o.\check{w}_n\in \partial \T$ is almost surely the unique point of accumulation of $\supp(\g_\infty)$.
\end{lemma}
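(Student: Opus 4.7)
The plan is to split the statement into two assertions. Let $\xi_\infty := \lim o.\check{w}_n \in \partial \T$, which exists almost surely by Proposition \ref{CKWThm2}. I will argue (a) every boundary accumulation point of $\supp(\gamma_\infty)$ equals $\xi_\infty$, and (b) $\supp(\gamma_\infty)$ is almost surely infinite. Together they give the claim. For (a), iterating the composition rule (\ref{portrait}) with the support condition (\ref{R}) shows that $w_{n+1}[v]$ differs from $w_n[v]$ only when $d_\T(v, o.\check{w}_n) \leq R$, so starting from $w_0 = e$ we obtain $\supp(\gamma_\infty) \subseteq \bigcup_{n \geq 0} B_\T(o.\check{w}_n, R)$. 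Since the inverted orbit converges to $\xi_\infty$ and $\T$ is locally finite, this $R$-neighborhood has $\xi_\infty$ as its only boundary accumulation point.

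For (b), first observe that the event $\{|\supp(\gamma_\infty)| = \infty\}$ is shift-invariant: the cocycle $\gamma_\infty(\omega)[v] = x_1[v]\,\gamma_\infty(\tau\omega)[v.x_1]$ forces $\supp(\gamma_\infty(\omega))$ and $\supp(\gamma_\infty(\tau\omega))$ to differ by at most $|B_\T(o,R)|$ vertices, so by the Kolmogorov 0-1 law for i.i.d.\ sequences its probability is $0$ or $1$; it suffices to exhibit a positive lower bound. I would establish this via a conditional Borel-Cantelli argument using Lemma \ref{mainlemma} and aperiodicity. Applying aperiodicity and non-degeneracy to the finite set $\{\delta_o^\sigma : \sigma \in \mathfrak{S}_q\}$ yields a uniform $N \geq 1$ with $\mu^{*N}(\delta_o^\sigma) > 0$ for every $\sigma$; fix concrete realising factorizations in $\supp(\mu)^N$. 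Along a sparse sequence $\ell_k \uparrow \infty$, consider the stopping times $\tau_{\ell_k}$ from Lemma \ref{mainlemma}. Define $B_k$ as the intersection of: (i) the next $N$ increments realize $\delta_o^{\sigma_k}$ for an $\mathcal{F}_{\tau_{\ell_k}}$-measurable choice of $\sigma_k$ making the resulting portrait entry non-trivial at a designated vertex $v_k$ near $o.\check{w}_{\tau_{\ell_k}}$; and (ii) after time $\tau_{\ell_k}+N$, the inverted orbit never returns to an $R$-neighborhood of $v_k$. Condition (i) has conditional probability $\geq p_0 > 0$ given $\mathcal{F}_{\tau_{\ell_k}}$ by independence of the $N$ increments from the past; condition (ii) has conditional probability $\geq c > 0$ by the strong Markov property combined with Lemma \ref{mainlemma} applied to the shifted walk. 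Taking $\ell_k$ sparse enough makes the $v_k$ pairwise distinct, and Levy's conditional second Borel-Cantelli lemma then yields infinitely many frozen non-trivial entries almost surely.

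\textbf{Main obstacle.} The delicate step is (i): multiplying the existing entry at $o.\check{w}_{\tau_{\ell_k}}$ by $\sigma_k$ could cancel it to the identity. For $q \geq 3$ at most one value of $\sigma \in \mathfrak{S}_q$ causes cancellation, so the $\mathcal{F}_{\tau_{\ell_k}}$-measurable choice of $\sigma_k$ easily avoids it while preserving $\mu^{*N}(\delta_o^{\sigma_k}) > 0$. For $q = 2$, where $\mathfrak{S}_2$ has only one non-trivial element, I would prepend a short deterministic preamble to the $N$-block which shifts the orbit to a sibling vertex $v_k$ that was previously untouched---possible because the spacing of $\ell_k$ places $v_k$ outside the $R$-neighborhood of the past orbit, so $w_{\tau_{\ell_k}}[v_k] = e$ and no cancellation can occur. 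This mirrors the strategy of Lyons and Peres \cite[Theorem~3.2]{LP} for lamplighter graphs on trees, which the present argument adapts to the non-Markov setting of the inverted orbit on $\T$.
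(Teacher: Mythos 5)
Your route is essentially the paper's. Part (a) is exactly the observation made before Theorem \ref{Poisson=config} (the portrait can change at step $n$ only within distance $R$ of $o.\check{w}_n$, by (\ref{portrait}) and (\ref{R})) combined with Proposition \ref{CKWThm2}; and for part (b) the paper likewise manufactures, at infinitely many crossing times $\t_{\ell_k}$, a non-trivial portrait entry that is then frozen by the no-return event of Lemma \ref{mainlemma}, using aperiodicity to place the relevant words in the support of a fixed convolution power and a Borel--Cantelli-type count. The differences are in execution, and two of your steps are genuinely problematic as written.

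First, your events $B_k$ contain the clause ``after time $\t_{\ell_k}+N$ the inverted orbit never returns to an $R$-neighbourhood of $v_k$'', which depends on the entire future; such events are not adapted to any finite-time filtration, so L\'evy's conditional second Borel--Cantelli lemma does not apply as cited. The paper meets the same difficulty and instead bounds the probability that the no-return event fails at $n$ successive candidate times by $(1-c)^n$, conditioning afresh after each witnessed return; you need that kind of successive-conditioning argument, and in addition your no-return clause starts at $\t_{\ell_k}+N$ and concerns a ball around $v_k$ rather than the subtree $T_{\ell_k}$, so it is not literally Lemma \ref{mainlemma}: you must rerun its proof from Lemma \ref{transience} and the strong Markov property, including the finite window of length $n_0$ right after the forced block. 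Second, the $q=2$ patch fails as stated: a sibling of $o.\check{w}_{\t_{\ell_k}}$ sits at the same horocyclic level as the record point, and the walk before $\t_{\ell_k}$, which lives on the near side of that level, can perfectly well have come within distance $R$ of such a sibling (for instance through one of its neighbours), so ``previously untouched'' is not ensured by sparseness of the $\ell_k$; only vertices beyond the record level by more than $R$ are guaranteed untouched. The cancellation obstacle is in any case spurious: aperiodicity also puts the identity in the support of $\m^{\ast N}$, so you may take $\s_k=e$ whenever the current entry is already non-trivial. The paper removes the issue even more neatly, uniformly in $q$, by conditioning on the last block of increments before $\t_{\ell_k}$ being one of the two words $\a^{R+1}$ or $\d_o^{\s}\a^{R+1}$, which differ by a rooted permutation, so that whatever the past, at least one of the two equally likely alternatives leaves a non-trivial entry for the no-return event to freeze.
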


\begin{proof}
We give the proof in the case $\E_{\check{\m}}\F\leq 0$. 
Since $\m$ is non-degenerate and aperiodic,
one can find an integer $j$ such that $\m^{\otimes j}(v)$ and $\m^{\otimes j}(v')$ are both positive, where $v=\a^{R+1}$ and $v'=\s \a^{R+1}$ with $\s \in \d_o^{\mathfrak{S}_q}$ non trivial (in the case $\E_{\check{\m}}\F>0$, take negative powers of $\a$ instead) and $R$ defined in (\ref{R}). 

There almost surely exist infinitely many integers $n$ such that the random word $w_{n}$ ends with $v$ or $v'$ 
at $n=\t_\ell$,
where $\ell$ is a positive multiple of $R+1$. 
Moreover, by Lemma \ref{mainlemma}, almost surely there is an infinite subsequence $(\ell_k)_{k \geq 0}$ of this set of integers such that 
\begin{eqnarray}\label{no_return}
o.\check{w}_{n} \notin T_{\ell_k} \ \text{for all} \ n \geq \t_{\ell_k}.
\end{eqnarray} 
Indeed, for each $\ell$, the probability that (\ref{no_return}) does not hold is at most $1-c$,
so the probability that this does not happen for $n$ successive cases is at most $(1-c)^n$. 
This shows that $\supp(\g_\infty)$ has almost surely at most one accumulation point. 
There remains to show that $\supp(\g_\infty)$ is almost surely infinitely supported.

Now for each $k$, at least one of the two possibilities $v$ or $v'$ guarantees to have a non-trivial portrait at $o.\check{w}_{\t_{\ell_k}}$. 
This also holds for $\g_\infty$ by (\ref{no_return}) and this can be done independently for different $k$ as the $\ell_k$ differ by at least $R+1$.
Note that $v'$ appears infinitely many times, because the probabilities of occurrence of $v$ or $v'$ are given by independent Bernoulli laws of the same non-trivial parameter. So the size of $\supp(\g_\infty)$ is almost surely infinite. 
\end{proof}

\begin{proof}[Proof of Theorems \ref{main} and \ref{Poisson=config}]
In order to apply Theorem \ref{Trap}, define the random sequence of subsets $\{D_\ell\}_{\ell=1}^\infty$ of $\DA(\T)$ as follows. Let $c_1>0$ be a parameter to be fixed below. We denote $o_{c_1\ell}$ the point at distance $c_1\ell$ from $o$ on the unique geodesic ray from $o$ to $\lim o.\check{w}_n \in \partial \T$. Let $T_{c_1\ell}$ be the subtree of $\T\setminus\{o_{c_1\ell}\}$ containing $o$.

The subset $D_\ell$ consists of all elements $g=(\g,m)$ such that $c_1\ell \leq m \leq c_1\ell+R$ and 
\[
\g[v]=\left\{\begin{array}{ll}\g_\infty[v] & \textrm{ if } d_\T(v,o_{c_1\ell}) >R \textrm{ and } v \in T_{c_1\ell}, \\
e & \textrm{ if } d_\T(v,o_{c_1\ell}) >R \textrm{ and } v \notin T_{c_1\ell}.
 \end{array}\right.
\]
Clearly the size of $D_\ell$ is bounded by a constant depending only on $q$ and $R$, so point (ii) is satisfied. By Lemma \ref{accumulation}, the limit $\lim o.\check{w}_n$ is almost surely the unique point of accumulation of $\supp(\g_\infty)$, so the set $D_\ell$ is measurable with respect to $\Ac=\s(\g_\infty)$ and point (i) is satisfied.

Now let $\tau_{c_1\ell}$ as above be the first hitting or crossing time of $H_{c_1\ell}$ when $\E_{\check{\m}}\F>0$ (resp. $H_{-c_1\ell}$ when $\E_{\check{\m}}\F\leq0$). Also denote $o'_{c_1\ell}$ the point at the intersection of the horocycle $H_{c_1\ell}$ (or $H_{-c_1\ell}$) and the geodesic between $o$ and $o.\check{w}_{\tau_{c_1\ell}}$. Lemma \ref{mainlemma} asserts that with probability greater than $c>0$, we have $o'_{c_1\ell}=o_{c_1\ell}$ and $w_{\tau_{c_1\ell}}$ belongs to $D_\ell$. So point (iii) is satisfied when $c_1>R$ because $\tau_{c_1\ell}\geq c_1\ell/R$ by (\ref{R}).

Note that when $\E_{\check{\m}}\F\leq0$, we always have $o'_{c_1\ell}=o_{c_1\ell}=\o(c_1\ell)$.
\end{proof}

\begin{remark}\label{GKR}
Note that the existence of the subsequence $(\ell_k)$ in the proof of Lemma \ref{accumulation} provides another proof of Proposition \ref{CKWThm2}, where the end is given by the intersection of all $\T \setminus T_{\ell_k}$. The original proof used a general result on transience in non-unimodular groups by Guivarc'h, Keane and Roynette: as $\m$ is non-degenerate and $\DA(\T)$ is dense in $\Aff(\T)$ by Proposition \ref{gdeal}, the closed subgroup generated by the support of $\m$ is $\Aff(\T)$ which is non-unimodular. It follows by \cite[Theorem 51]{GKR} that the random walks $\{w_n\}$ and $\{\check{w}_n\}$ are transient in $\Aff(\T)$, i.e.\ they leave every compact set almost surely;
for details, see \cite[Theorem 2]{CKW}.
\end{remark}

\begin{remark}
In the case $\E_\m\F\neq0$, Theorem \ref{Poisson=config} still holds when $\m$ has a finite first moment. The proof above can be adapted as follows. Given $\e>0$, the sets $D_\ell$ now consist of all elements such that 
$c_1\ell \leq m \leq c_1\ell+\e\ell$ and 
\[
\g[v]=\left\{\begin{array}{ll}\g_\infty[v] & \textrm{ if } d_\T(v,o_{c_1\ell}) >\e\ell \textrm{ and } v \in T_{c_1\ell}, \\
e & \textrm{ if } d_\T(v,o_{c_1\ell}) >\e\ell \textrm{ and } v \notin T_{c_1\ell}.
 \end{array}\right.
\] 
The drift of the random walk induced on the integers still gives Lemma \ref{mainlemma} and point (i). The first moment condition ensures that the sequence of increments almost surely has a sublinear word norm. Combined with Proposition \ref{metric}, this shows that (\ref{R}) can be replaced by the almost sure existence of $N$ such that for all $n \geq N$, 
\[
|\F(x_n)|\leq \e n \quad \textrm{ and } \quad x_n[v]=e \textrm{ when }d_\T(v,o)>\e n.
\] 
The drift of the random walk induced on the integers still gives Lemma \ref{mainlemma} and so $w_{\t_{c_1\ell}}$ still belongs to $D_\ell$ with positive probability. Point (i) also still holds.

Finally, the law of large numbers gives a constant $c_2>0$ depending only on $\m$ such that $\t_\ell/\ell\rightarrow c_2$ almost surely. So point (iii) is satisfied when $c_1>1/c_2$.
\end{remark}

\begin{remark}\label{coupling}
We sketch here another proof of Lemma \ref{mainlemma} when $\E_\m\F=0$, considering only the random trajectory $o.\check{w}_n$ in the tree $\T$. Looking at times when $\F(w_n)$ first visits negative integers, we see that the line $\o=o.\a^\Z$ is recurrent. The metric projection $\proj(o.\check{w}_n)$ onto this line is a random process on $\Z$ with memory encoded in the portrait of $w_n$. However it is stochastically dominated by a random walk on $\Z$ drifted towards $-\infty$. The reason is that at times when $o.\check{w}_n$ is not on the line, the projection may only decrease, and at times when it is on the line, the projection may decrease by $x<0$ with probability $p(x)=\F_\ast\m(x)$ but it may increase by $x>0$ with probability $p(x)$ satisfying only $\sum_{x>0} xp(x)\leq \sum_{x>0}x\F_\ast\m(x)-c_1$ for some $c_1>0$ independent of the past. A precise coupling argument can be developed, but we prefer to omit the tedious details.
\end{remark}

\begin{remark}
The Schreier graph $\Gc$ shares some visual similarities with the Schreier graph of Thompson's group $F$ acting on dyadic rationals pictureded in \cite{Sav}. Both are essentially infinite binary rooted trees with infinitely many rays attached to it. Transience of this Schreier graph was used by Kaimanovich to show that the Poisson boundary of $F$ with respect to finitely supported measures is non-trivial \cite{KaiF}. In the discrete affine group, we can actually identify the Poisson boundary because the inverted orbit respects the geometric structure of $\T$, and this permits to define the traps. It is not known whether the $\m$-boundary of $F$ described in \cite{KaiF} is the whole Poisson boundary. Note that this boundary is actually trivial for some infinitely supported measures \cite{JZ}.
\end{remark}

\subsection{Description of the harmonic measure}\label{harmonic}

The aim here is to give a description of the distribution (the harmonic measure) $\g_{\infty\ast}\Pr$ on the space $\Pi_{\T}\mathfrak{S}_q$ of final configurations, which is the Poisson boundary by Theorem \ref{Poisson=config}.
We first describe a sublinear geodesic tracking property in the tree $\T$ when $\E_{\check \m}\F \neq 0$, in the spirit of Tiozzo \cite{Tiozzo}.

\begin{proposition}\label{geodesic-tracking}
Let $\m$ be as in Section \ref{section-Poisson}.
If $\E_{\check \m}\F \neq 0$,
then there exists a constant $C>0$ such that
for $\Pr$-almost every sample, there is a (random) geodesic ray $\x$ from $o$ converging to a point $\partial \T$ satisfying that
$$
\limsup_{n \to \infty}\frac{d_\T(o.\check{w}_n, \x)}{\log n} \le C.
$$
In the case when $\E_{\check \m}\F < 0$, the geodesic ray $\x$ is a deterministic one converging to $\o$,
while in the case when $\E_{\check \m}\F>0$, the geodesic ray $\x$ is a random one converging to some point in $\partial \T \setminus \{\o\}$.
\end{proposition}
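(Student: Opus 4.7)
The plan is to establish, uniformly in $n$, an exponential tail bound $\Pr(d_\T(o.\check w_n, \x) \ge t) \le C_0 e^{-ct}$ for some $c, C_0 > 0$ depending only on $\m$. Setting $t = C \log n$ with $cC > 2$ makes this summable, and the first Borel--Cantelli lemma then yields $\limsup_n d_\T(o.\check w_n, \x)/\log n \le C$ almost surely.

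For $\E_{\check\m}\F < 0$, where $\x = \o$ is deterministic, I would begin with an explicit formula for the horizontal deviation. Decomposing $\check w_n = \hat\g_n \a^{-\F(w_n)}$ with $\hat\g_n$ horocyclic and iterating the automata rule (\ref{automata}) down the ray $\o$ to compute the label of $o.\check w_n$ yields
\[
d_\T(o.\check w_n, \o) \;=\; \max\{\, i \ge 1 : \check w_n[\o(i)](0) \ne 0\,\} \vee 0,
\]
which is dominated by $\max\{i \ge 1 : \o(i) \in \supp(\check w_n)\}$. A simple induction using (\ref{step}) gives $\supp(w_n) \subset \bigcup_{k=0}^{n-1} B_\T(o.\check w_k, R)$ with $R$ as in (\ref{R}); combined with the identity $\supp(\check w_n) = \supp(w_n).w_n$, this places $\supp(\check w_n)$ inside the union of $R$-balls around the vertices $o.(x_{k+1}\cdots x_n)$. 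Comparing Busemann values, $\o(i) \in \supp(\check w_n)$ forces $\max_{0 \le k \le n} \F(w_k) - \F(w_n) \ge i - R$. Finally, since $\m$ is finitely supported with $\E_\m\F = -\E_{\check\m}\F > 0$, the reversed partial sums $T_j := -\sum_{i=n-j+1}^n \F(x_i)$ describe a random walk on $\Z$ with bounded increments and negative drift, so a standard Chernoff/Cram\'er estimate yields $\Pr(\max_j T_j \ge t) \le e^{-ct}$ uniformly in $n$. Since $\max_j T_j = \max_k \F(w_k) - \F(w_n)$, combining produces the desired tail.

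For $\E_{\check\m}\F > 0$, $\x$ is the geodesic from $o$ to the a.s.\ limit $\z := \lim_n o.\check w_n \in \partial\T \setminus \{\o\}$. Set $L_n := \b(o.\check w_n) = -\F(w_n)$ and, for each $\ell \ge 1$, let $B_\ell := \{o.\check w_k \notin T_\ell \text{ for all } k \ge \t_\ell\}$ be the commitment event of Lemma \ref{mainlemma}, so that $\Pr(B_\ell \mid \Fc_{\t_\ell}) \ge c$. By the strong Markov property applied at $\t_{L_n - m + 1} < \cdots < \t_{L_n}$ (exactly as in the proof of Lemma \ref{accumulation}), the probability that none of these commitments succeeds is at most $(1 - c)^m$. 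On the complementary event there exists $\ell^\ast \in [L_n - m + 1, L_n]$ at which commitment holds; the tree structure then ensures that $o.\check w_{\t_{\ell^\ast}}$ is simultaneously an ancestor of $o.\check w_n$ and of $\z$, so it lies on $\x$, giving
\[
d_\T(o.\check w_n, \x) \;\le\; d_\T(o.\check w_n, o.\check w_{\t_{\ell^\ast}}) \;=\; L_n - \ell^\ast \;<\; m.
\]
Taking $m = C' \log n$ yields $\Pr(d_\T(o.\check w_n, \x) \ge C' \log n) \le n^{-C'|\log(1-c)|}$, summable for $C'$ large.

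The principal subtlety lies in the second case: because $\x$ depends on the entire trajectory through the random limit $\z$, naive Borel--Cantelli for a fixed target is unavailable. The iteration of Lemma \ref{mainlemma} above is what decouples the random target from the tail estimate, modulo a short verification (via the tree structure) that the committed vertex $o.\check w_{\t_{\ell^\ast}}$ lies on $\x$.
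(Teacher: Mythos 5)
Your treatment of the case $\E_{\check\m}\F<0$ is essentially correct, and it is a more explicit route than the paper's (which introduces stopping times at the levels $-Fn$ and compares the excursions between them with a drifted walk on $\Z$): the inclusion $\supp(\check w_n)\subset\bigcup_{k}B_\T\bigl(o.(x_{k+1}\cdots x_n),R\bigr)$ and the reduction to $\max_{0\le k\le n}\bigl(\F(w_k)-\F(w_n)\bigr)$ with a Chernoff bound are sound. One small slip: your displayed identity computes the distance to the bi-infinite line $o.\a^{\Z}$, not to the ray from $o$ towards $\o$; when $\F(\check w_n)>0$ the distance to the ray picks up the extra term $\F(\check w_n)^{+}$. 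Since $\F(\check w_n)^{+}\le\max_{0\le k\le n}\bigl(\F(w_k)-\F(w_n)\bigr)$, the same tail estimate absorbs it, so this is harmless.

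The case $\E_{\check\m}\F>0$ has a genuine gap at its quantitative core. You assert that the probability that none of the $m$ commitment events $B_\ell$, $\ell\in(L_n-m,L_n]$, occurs is at most $(1-c)^m$ ``by the strong Markov property, exactly as in Lemma \ref{accumulation}''. This does not follow from Lemma \ref{mainlemma}: the event $B_\ell$ is not measurable with respect to $\Fc_{\t_{\ell'}}$ for later levels $\ell'$ --- its failure may only be witnessed by a return occurring after all of $\t_{L_n-m+1},\dots,\t_{L_n}$, indeed after time $n$ --- so the successive conditioning does not close, and bounds of the form $\Pr(B_\ell^c\mid\Fc_{\t_\ell})\le 1-c$ alone do not imply $\Pr\bigl(\bigcap_\ell B_\ell^c\bigr)\le(1-c)^m$ (if all the events coincided with a single event of probability $c$, the intersection of the complements would have probability $1-c$, not $(1-c)^m$). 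In addition, your window of levels is anchored at $L_n=\F(\check w_n)$, which is time-$n$ information, i.e.\ lies in the future of the stopping times at which you wish to condition, and you implicitly need $L_n\ge m$. The desired exponential decay is plausible in this model (a late return that defeats many nested levels at once forces the $\Z$-projection to backtrack by that many levels, which Chernoff controls), but proving it requires precisely the decoupling the paper builds in from the start: the regeneration times $\overline\t(n)$ (not stopping times) are conditioned upon as a whole sequence, the excursions between them are then independent drifted walks on $\Z$ whose maxima have exponential tails, and the nested subtrees at the points $o.\check w_{\overline\t(n)}$ produce the limiting ray $\x$ within bounded distance of these points. Note also that in Lemma \ref{accumulation} the analogous product bound is used only qualitatively, to extract an infinite subsequence, whereas you need it quantitatively and uniformly in $n$; so the appeal to that proof does not fill the gap. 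Your geometric observation that the committed vertex $o.\check w_{\t_{\ell^\ast}}$ lies on $\x$ and is an ancestor of $o.\check w_n$ (with $d_\T(o.\check w_n,o.\check w_{\t_{\ell^\ast}})\le L_n-\ell^\ast$, equality only when $H_{\ell^\ast}$ is hit exactly) is correct.
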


Applying a general theorem by Tiozzo \cite[Theorem 6]{Tiozzo} immediately gives deviation of order $o(n)$ instead of $\log n$ for $\E_{\check \m}\F \neq 0$.

\begin{proof}
First we prove the claim when $\E_{\check \m}\F >0$.
Let $F:=\max_{g \in \supp \m} d_\T(o, o.g)$.
We define a sequence of regeneration time; namely, let
$$
\overline \t(n):=\inf \{k > 0 \ : \ \forall i \ge k, \F(\check w_i) > F n\}
$$
for $n \ge 0$.
We have $\overline \t(n)< \overline \t(n+1)$ for all $n$.
Note that $\overline \t(n)$ are not stopping times. 
Given the sequence $\{\overline \t(n) : n \ge 0\}$, 
a family of excursions $\{X_i^{(n)} : i \in I_n\}$ for $I_n:=[0, \overline{\t}(n+1) -\overline{\t}(n))$ defined by
\[
X_i^{(n)}:=\F(\check w_{i+\overline{\t}(n)}) - \F(\check w_{\overline{\t}(n)})
\] 
are independent,
and each excursion is an asymmetric random walk on $\Z$ drifted toward $+\infty$ conditioned on the event
$X_{\overline \t(n+1) - \overline \t(n) -1} \in (-\infty, F]$.
By comparison with the asymmetric random walk on $\Z$, 
we have that there exist constants $c_1, c_2>0$ such that for every $n \ge 0$ and for every $M \ge 0$,
$$
\Pr\left(\max_{i \in I_n} X_i^{(n)} \ge M \ | \ \{\overline \t(m) : m \ge 0\} \right) \le c_1 e^{-c_2 M}.
$$
Therefore given the sequence $\{\overline \t(n) : n \ge 0\}$, there exists a constant $C>0$ such that
$$
\max_{k \in [\overline \t(n), \overline \t(n+1))} d_\T(o.\check w_{\overline \t(n)}, o.\check w_k) \le C \log n,
$$
for all large enough $n$ almost surely.
Notice that the $F$-neighbourhood $\T(n, F)$ of the subtree $\T_{v_n}$ where $v_n:=o.\check w_{\overline \t(n)}$ are nested: 
$\T(n, F) \supset \T(n+1, F) \supset \cdots$,
and thus there exists a geodesic ray $\x$ from $o$ converging to some $\x$ in $\partial \T \setminus \{\o\}$, where we use the same symbol $\x$, such that
$d_\T(o.\check w_{\overline \t(n)}, \x) \le F$.
Clearly, we have $n \le \overline \t(n)$.
We have that $\Pr$-almost surely 
$$
d_\T(o.\check w_n, \g) \le C\log n + F
$$
for all large enough $n$, and obtain the claim.

Next we show the claim when $\E_{\check \m}\F < 0$.
We define the sequence of stopping times:
for $n \ge 0$,
$$
\t(n):=\min\{k \ge 0 \ : \ \F(\check w_k) \le -Fn\}.
$$
The family of excursions $\{X_i^{(n)} : i \in I_n\}$ for $n=0, 1, 2, \dots$ defined in a similar way to above 
are independent, and each excursion is an asymmetric random walk on $\Z$ drifted toward $-\infty$ up to a random stopping time.
Again comparison with the walk on $\Z$ yields the proof.
\end{proof}

Let us denote the harmonic measure by $\n_\m$ 
which is
$\g_{\infty\ast}\Pr$ on the space $\Pi_{\T}\mathfrak{S}_q$.
We show that $\n_\m$ have distinct supports depending on the signs of $\E_{\check \m}\F$.

\begin{proposition}\label{singular}
Let $\n_-$, $\n_0$ and $\n_+$ be the harmonic measures for aperiodic step distributions $\m_-$, $\m_0$, and $\m_+$ respectively satisfying $\E_{\check \m_-}\F<0$, $\E_{\check \m_0}\F=0$ and 
$\E_{\check \m_+}\F>0$.
Then $\n_-$, $\n_0$ and $\n_+$ have distinct supports in $\Pi_{\T}\mathfrak{S}_q$. In particular, they are mutually singular.
\end{proposition}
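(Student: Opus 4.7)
The plan is to exhibit, for each pair of measures, a $\s(\g_\infty)$-measurable set that has full mass under one and null mass under the other. Two invariants suffice: (i) the unique accumulation point of $\supp(\g)\subseteq\T$ in $\partial\T$, and (ii) the horocyclic supremum $\b_{\max}(\g):=\sup\{\b(v):v\in\supp(\g)\}\in\Z\cup\{+\infty\}$. By Lemma \ref{accumulation}, invariant (i) coincides almost surely with $\lim_n o.\check w_n$, and by Proposition \ref{CKWThm2} this limit equals $\o$ when $\E_{\check\m}\F\le 0$ (the $\m_-$ and $\m_0$ cases) and lies in $\partial\T\setminus\{\o\}$ when $\E_{\check\m}\F>0$ (the $\m_+$ case). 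The measurable event $\{\g:\supp(\g)\ \text{accumulates at}\ \o\}$ therefore separates $\n_+$ from both $\n_-$ and $\n_0$.

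To separate $\n_-$ from $\n_0$ via invariant (ii), the $\n_-$ direction is immediate: by the strong law of large numbers $\F(\check w_n)/n\to\E_{\check\m_-}\F<0$, so $\F^{\max}:=\sup_n\F(\check w_n)<\infty$ almost surely; since every step modifies the portrait only within $\T$-distance $R$ of $o.\check w_n$ by (\ref{step}) and (\ref{R}), one has $\b_{\max}(\g_\infty)\le\F^{\max}+R<\infty$ almost surely, so $\n_-(\{\b_{\max}=+\infty\})=0$. For the $\n_0$ direction, $\F(\check w_n)$ is a recurrent non-degenerate random walk on $\Z$, hence $\sup_n\F(\check w_n)=+\infty$ almost surely and every first passage time $\s_L:=\inf\{n:\F(\check w_n)=L\}$ is almost surely finite. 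I would then adapt the construction in the proof of Lemma \ref{accumulation} with the direction reversed: using aperiodicity, pick $j\ge 1$ and $u=\a^{-(R+1)}$, $u'=\s\a^{-(R+1)}$ (for a nontrivial $\s\in\d_o^{\mathfrak S_q}$) with $\m_0^{*j}(u)$, $\m_0^{*j}(u')>0$. Conditional on $\Fc_{\s_{k(R+1)}}$, the strong Markov property shows that the next $j$ steps equal $u$ or $u'$ with positive probability, and since these two outcomes differ by the nontrivial multiplication by $\s$ at $o.\check w_{\s_{k(R+1)}}$, at most one of them can leave the portrait there trivial; so with conditional probability at least $\min(\m_0^{*j}(u),\m_0^{*j}(u'))>0$ a non-trivial entry is placed at height exactly $k(R+1)$. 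Since $\{\b_{\max}(\g_\infty)=+\infty\}$ is $\Ac$-measurable and hence $\Ic$-measurable by Theorem \ref{Poisson=config}, it has probability $0$ or $1$; conditional Borel--Cantelli applied along the nested stopping times $\s_{k(R+1)}$ then upgrades the positive-probability lower bound to full measure, giving $\n_0(\{\b_{\max}=+\infty\})=1$ and hence mutual singularity with $\n_-$.

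The hard point is the survival of freshly placed portrait entries in the $\n_0$ analysis: one must check that the non-trivial entry produced at height $k(R+1)$ is not cancelled by later visits of the walk to a neighbourhood of $o.\check w_{\s_{k(R+1)}}$. Unlike in Lemma \ref{accumulation}, no analogue of Lemma \ref{mainlemma} is available in the direction opposite to $\o$, because in the zero-drift regime the walk recurrently re-enters any subtree attached at $o.\check w_{\s_L}$. One instead combines the transience in the Schreier graph $\Gc$ (Lemma \ref{transience}), which bounds the number of subsequent modifications at any fixed vertex by an almost surely finite random integer, with a quantitative estimate on the finite group $\mathfrak{S}_q$ showing that these random modifications fail to multiply back to the identity with probability bounded uniformly away from $0$.
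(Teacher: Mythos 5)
Your separation of $\n_+$ from $\n_-,\n_0$ (accumulation point of $\supp\g_\infty$, via Lemma \ref{accumulation} and Proposition \ref{CKWThm2}) is the paper's argument, but for $\n_-$ versus $\n_0$ you take a genuinely different invariant: the paper uses the sets $\hat D(C\log)$, whose full $\n_-$-measure rests on the logarithmic geodesic tracking of Proposition \ref{geodesic-tracking}, while your $\b_{\max}$ only needs $\sup_n\F(\check w_n)<\infty$ under negative drift plus the radius-$R$ bound on where the portrait can change; that half is correct and simpler. The gap is in the $\n_0$ half, in two places. First, the asserted zero--one law is wrong as stated: being $\Ac$-, hence $\Ic$-, measurable does \emph{not} force probability $0$ or $1$ --- the invariant $\s$-field is nontrivial, which is exactly the content of Theorem \ref{Poisson=config}. (What is true, and what you would need, is that $\{\b_{\max}(\g_\infty)=+\infty\}$ is invariant under the $\DA(\T)$-action on configurations, since that action shifts Busemann values by $\F(g)$ and alters only finitely many entries, and group-invariant events have harmonic measure $0$ or $1$ by ergodicity of the boundary action; or one avoids the issue altogether by a conditional Borel--Cantelli along the times $\s_{k(R+1)}$, as in Lemma \ref{accumulation}.) Second, and more seriously, the survival mechanism in your last paragraph is unjustified: transience does bound the number of later modifications at a fixed vertex, but there is no ``quantitative estimate on $\mathfrak{S}_q$'' guaranteeing that these modifications fail to cancel the placed entry with probability bounded away from $0$ --- the future multipliers at that vertex are neither independent nor uniform, they depend on $\m_0$ and on the trajectory, and nothing rules out cancellation with conditional probability close to $1$. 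This is precisely the difficulty that the paper's $v/v'$-plus-no-return device (Lemma \ref{mainlemma}) is designed to bypass, and, as you note, that device does not transfer verbatim to the downward direction.

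The repair is already latent in your own construction, and once it is made no survival estimate is needed. Write $z=o.\check w_{\s_L}$ with $L=k(R+1)$. Since $\d_o^\s$ fixes $o$ and $z.w_{\s_L}=o$, in \emph{both} scenarios one has $z.w_{\s_L+j}=o.\a^{-(R+1)}=\o(R+1)$, hence for every later time $n$ the point $z.w_n=\o(R+1).(x_{\s_L+j+1}\cdots x_n)$ depends only on the post-placement increments; by (\ref{portrait}) the total future multiplier $M$ acting on the portrait at $z$ is therefore the \emph{same} function of those increments in the two scenarios. The two possible final values of $\g_\infty[z]$ are $w_{\s_L}[z]M$ and $w_{\s_L}[z]\s M$; they differ by $\s\neq e$, so at most one of them is trivial, and conditionally on $\Fc_{\s_L}$ a nontrivial final entry at Busemann level $L$ appears with probability at least $\min\bigl(\m_0^{\ast j}(u),\m_0^{\ast j}(u')\bigr)>0$. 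With this coupling observation (or, alternatively, by running the paper's own mechanism --- deep excursion, upward block $\a^{R+1}$ or $\s\a^{R+1}$, then no return to the subtree below via Lemma \ref{mainlemma}, which also yields $\b_{\max}=\infty$ almost surely) your route goes through and is a legitimate, somewhat lighter alternative to the paper's $\hat D(f)$-sets, since it dispenses with Proposition \ref{geodesic-tracking}.
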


\begin{proof}
First we observe that,
by Lemma \ref{accumulation}, the support of final configurations $\supp \g_\infty$ has the accumulation point $\o$ in the case when $\E_{\check \m}\F \le 0$, and a (random) accumulation point in $\partial \T \setminus \{\o\}$ in the case when $\E_{\check \m}\F > 0$.
This shows that the support of $\n_+$ is distinct from that of $\n_-$ and $\n_0$
There remains to show that $\n_-$ and $\n_0$ have distinct supports.

Let us consider the infinite subtree $\T_{\ell}=\T_{\o(\ell)}$ rooted at $\o(\ell)$ and its finite subtree $\T_{\ell, f(\ell)}$ consisting of all descendants up to distance $f(\ell)$ from the root $\o(\ell)$.
We define a subset in $\T$ by
$$
D(\ell, f ):= \T_{\ell}\cup \bigcup_{m \ge \ell}\T_{m, f(m)},
$$
Let us define a set of configuration $
\hat D(f):=\left\{\f \in \Pi_{\T}\mathfrak{S}_q : \exists \ell, \ \supp \f \subset D(\ell, f)\right\}$. Proposition \ref{geodesic-tracking} gives a constant $C>0$ such that $\n_-(\hat D(C\log))=1$.

There remains to show that $\n_0(\hat D(C\log))=0$. In fact, for $f(\ell)=\ell+g(\ell)$ with $g(\ell)\leq \ell$, we claim that $\n_0(\hat D(f))=0$.

We argue as in the proof of Lemma \ref{accumulation}, where $v$ and $v'$ were defined. By properties of zero mean random walks on $\Z$, there are almost surely infinitely many $\ell_k$'s such that the trajectory $o.\check{w}_n$ reaches level $g(\ell_k)+1$ in the tree $\T_{\ell_k}$. Let $\t_k$ be the stopping time when $o.\check{w}_n$ first comes back to a level $\leq g(\ell_k)$. We consider only the almost surely infinitely many $k$'s where $w_{\t_k}$ ends with $v$ or $v'$. By Lemma \ref{mainlemma}, there is a fixed positive probability that the trajectory never visits again the subtree above vertex $o.\check{w}_{\t_k}$. At least one of the two possible endpoints $v$ or $v'$ guarantees that the final configuration is non trivial in $\T_{\ell_k}\setminus \T_{\ell_k,f(\ell_k)}$, proving the claim.
\end{proof}

\begin{remark}
Considering only the accumulation point of Lemma \ref{accumulation} gives a $\m$-boundary of the random walk (see \cite{Khyp},\cite{Fur} for a definition) which is a quotient of the Poisson boundary. In topological (spread-out measures) random walks on $\Aff(\T)$, this accumulation point is actually the Poisson boundary by \cite{CKW}. 

\end{remark}

\section{Embeddings into wreath products and $L_p$-compression}\label{emb}

\subsection{Embeddings into wreath products}
We consider the group $\DA(\T)$ and fix the set of generators $S_1$ as in Proposition \ref{metric},
and write the corresponding word metric by $d_{\DA}(x, y):=|x^{-1}y|_{\DA}$,
where we do not write it such as $|x^{-1}y|_{S_1}$ for clarity of notation.
We compare this metric in $\DA(\T)$ with a group of wreath product.

Given two groups $H$ and $F$, 
recall that  the wreath product $H \wr F$ is defined as $\oplus_{F} H \rtimes F$, where the semidirect product is given by
the left action: $\f \mapsto \f(x^{-1} \cdot )$ for $x \in F$ and $\f \in \oplus_{F} H$.

The set $\{(\d_e^{h}, e)\}_{h \in \mathfrak{S}_q} \cup \{({\bf 1}, s)\}_{s \in S_F}$, where $e$ is the identity element of the base group $F$, $\d_e^{h}$ is the element in $\oplus_F H$ given by $h$ at $e$ and the identity otherwise, and
${\bf 1}$ is the identity everywhere on $F$, generates $H\wr F$ and is often called the ``switch-or-walk" generating set. The set
\begin{eqnarray}\label{sws}
\{(\d_e^{h_1}, e)({\bf 1}, s)(\d_e^{h_2}, e) \ : \ h_1,h_2 \in H, s \in S_F\},
\end{eqnarray}
is another generating set of $H\wr F$ called ``switch-walk-switch'' generating set, more convenient for our purpose.

Let us consider the wreath product where $H=\mathfrak{S}_q$ and $F=F_{q+1}$ is the free product of $q+1$-copies of $\Z_2$.  The Cayley graph of $F_{q+1}$ with respect to the standard symmetric set of generators $S_F$ is a $q+1$-regular tree. We fix an identification of this tree with $\T$.
We consider the Cayley graph of $\G:=\mathfrak{S}_q \wr F_{q+1}$ associated with the set of generators in (\ref{sws}),
and denote the corresponding word metric by $d_\G(x, y):=|x^{-1} y|_\G$.

\begin{lemma}\label{biLip}
There exists an isometric embedding $f: \DA(\T) \to \G=\mathfrak{S}_q \wr F_{q+1}$, i.e.
for all $x, y$ in $\DA(\T)$, one has $d_\G \left(f(x), f(y)\right) = d_{\DA}(x, y)$.
\end{lemma}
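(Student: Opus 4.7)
My plan is to define $f$ using the bijection between $\DA(\T)$ and the set of pairs (endpoint in $\T$, finitely supported portrait $\T\to\mathfrak{S}_q$), which has an obvious analogue for $\G=\mathfrak{S}_q\wr F_{q+1}$. Fix a graph isomorphism $\pi:F_{q+1}\to V(\T)$ sending $e$ to $o$; this exists because the Cayley graph of $F_{q+1}$ with its standard generators is a $(q+1)$-regular tree. Each $g\in\DA(\T)$ is determined by $(o.g^{-1},\g)$, since the integer $\F(g)=-\b(o.g^{-1})$ can be read off from the endpoint; likewise every element of $\G$ is uniquely a pair (position in $F_{q+1}$, finitely supported configuration). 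Set
\[
f(g):=\bigl(c_g,\ \pi^{-1}(o.g^{-1})\bigr),\qquad c_g(v):=g[\pi(v)]\text{ for }v\in F_{q+1}.
\]
Injectivity is then immediate from the parametrization.

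I would compare $d_{\DA}(x,y)$ and $d_\G(f(x),f(y))$ directly via their TSP descriptions, rather than trying to show $f$ is a homomorphism (it is not: for instance $f(\a)^2=({\bf 1},u^2)=({\bf 1},e)$ since the generators of $F_{q+1}$ have order two, while $f(\a^2)=({\bf 1},\pi^{-1}(\o(2)))\neq({\bf 1},e)$). By Proposition \ref{metric},
\[
d_{\DA}(x,y)=|x^{-1}y|_{\DA}=\TSP_\T(\supp(x^{-1}y);\ o,\ o.y^{-1}x),
\]
and applying the tree isometry induced by $x^{-1}$ transforms this into $\TSP_\T(\supp(x^{-1}y).x^{-1};\ o.x^{-1},\ o.y^{-1})$. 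The analogous TSP formula for the switch-walk-switch generating set of $\G$ (a standard lamplighter-type computation over trees) gives, after translating by $p_x:=\pi^{-1}(o.x^{-1})$ inside the TSP,
\[
d_\G(f(x),f(y))=\TSP_{F_{q+1}}\bigl(\{w:c_x(w)\neq c_y(w)\};\ p_x,\ p_y\bigr),
\]
with $p_y:=\pi^{-1}(o.y^{-1})$. Under $\pi$, the right-hand side equals $\TSP_\T(\{u\in\T:x[u]\neq y[u]\};\ o.x^{-1},\ o.y^{-1})$.

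The proof then reduces to the portrait identity
\[
\supp(x^{-1}y).x^{-1}=\{u\in\T:x[u]\neq y[u]\},
\]
which follows from (\ref{portrait}): $e=xx^{-1}$ gives $x^{-1}[u.x]=(x[u])^{-1}$, whence $(x^{-1}y)[u.x]=(x[u])^{-1}y[u]$ is non-trivial exactly when $x[u]\neq y[u]$; setting $v=u.x$ identifies the two sets. The main obstacle will be bookkeeping of the portrait calculus --- keeping straight that $\DA(\T)$ acts on $\T$ from the right, that supports are indexed along the inverted orbit, and that $x^{-1}$ acts as a graph automorphism of $\T$, so the TSP is invariant under this reindexing. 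A small case distinction may be needed for $g\in\d_o^{\mathfrak{S}_q}$, where the TSP formula of Proposition \ref{metric} takes the exceptional value $2$, but the same strategy applies.
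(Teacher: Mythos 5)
Your proposal is correct and follows essentially the same route as the paper: the identical embedding $g\mapsto(\{g[v]\}_{v\in\T},o.g^{-1})$, compared through the switch-walk-switch TSP formula for $\mathfrak{S}_q\wr F_{q+1}$ and Proposition \ref{metric}. You merely make explicit what the paper leaves implicit (that $f$ is not a homomorphism, so one must match distances via the identity $\supp(x^{-1}y).x^{-1}=\{u:x[u]\neq y[u]\}$ and invariance of TSP under the tree automorphism $x^{-1}$), and your bookkeeping of this step is accurate.
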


\begin{proof}
We define the map
$f: \DA(\T) \to \G=\mathfrak{S}_q \wr F_{q+1}$ by
$$
g \mapsto \left(\{g[v]\}_{v \in \T}, o.g^{-1}\right)
$$
under the identification $F_{d+1}$ with $\T$.
It is well-known that the word metric $d_\G$ of wreath product with switch-walk-switch generating set is given by
$$
|(\f, x)|_\G=\left\{\begin{array}{ll} \TSP(\supp \f ; e, x), & \textrm{ if } (\f,x)\neq (\d_e^h,e) \\ 2 & \textrm{ if } (\f,x)= (\d_e^h,e) \textrm{ with } h\neq e \end{array} \right.
$$
where $\TSP(\supp \f ; e, x)$ denotes the minimal length of path in $\T$ from $e$ to $x$ visiting all the points in $\supp \f$. This is an isometry by Proposition \ref{metric}.
\end{proof}

\begin{remark}
The argument shows that there are bi-Lipschitz embeddings of $\DA(\T)$ into Sava's lamplighter graphs $\mathfrak{S}_q \wr \T$ and into the wreath product group $\mathfrak{S}_q \wr G$ as soon as the group $G$ contains an infinite bi-Lipschitz embedded binary tree. 

Of course, the group $\DA(\T)$ is not quasi-isometric to $\G$ because the latter is non-amenable. It would be interesting to show that it is not quasi-isometric to Sava's lamplighter graphs, nor to $\mathfrak{S}_q \wr (\Z_2 \wr \Z)$. 

\end{remark}

\subsection{$L_p$-compression of the discrete affine group}\label{compexp}
Recall that for a finitely generated group $\G$, and for $1 \le p < \infty$,
the {\it $L_p$-compression exponent} $\a^\ast_p(\G)$ is the supremum of all those $\a \ge 0$ for which there exists a Lipschitz map
$\G \to L_p$
such that for some constant $c>0$ and for all $x, y$ in $\G$,
$$
\|f(x)-f(y)\|_p \ge c d_\G(x, y)^\a,
$$
where $d_\G$ denotes a word metric in $\G$.
If $p=2$, then $\a^\ast_2(\G)$ is called the {\it Hilbert compression exponent}.
These are quasi-isometric group invariants introduced in \cite{GK}; for backgrounds and related results, see e.g.\ \cite{NPemb}, \cite{NPLp} and references therein.
For all $1 \le p < \infty$, we determine the exact values of $L_p$-compression exponents for $\DA(\T)$.

\begin{proposition}[Proposition \ref{L_Pcomp}]\label{Lp}
The group $\DA(\T)$ admits a bi-Lipschitz embedding into $L_1$.
Moreover,
for $1 \le p < \infty$,
$$
\a_p^\ast \left(\DA(\T)\right)=\max \left\{\frac{1}{p}, \frac{1}{2}\right\}.
$$
\end{proposition}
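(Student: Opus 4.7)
The plan is to combine Lemma \ref{biLip} with the $L_1$-embedding theorem of Cornulier-Stadler-Valette \cite{CSV} and with the lamplighter compression results of Naor-Peres \cite{NPemb,NPLp}. Lemma \ref{biLip} provides an isometric embedding $\DA(\T) \hookrightarrow \G := \mathfrak{S}_q \wr F_{q+1}$; since the Cayley graph of $F_{q+1}$ is a $(q+1)$-regular tree, hence a median graph embedding isometrically into $L_1$, and since $\mathfrak{S}_q$ is finite, the theorem of \cite{CSV} supplies a bi-Lipschitz embedding of $\G$, and therefore of $\DA(\T)$, into $L_1$, establishing the first assertion.

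For the lower bound $\alpha_p^*(\DA(\T)) \geq \max\{1/p,1/2\}$, I would compose the $L_1$-embedding with the Mazur map $\Phi_p : L_1 \to L_p$, $f \mapsto \operatorname{sgn}(f)|f|^{1/p}$, obtaining an $L_p$-embedding with compression exponent $\geq 1/p$ for every $p \geq 1$. Specializing at $p=2$ yields Hilbert compression $\geq 1/2$, which transfers to $L_p$ for every $p \geq 2$ through the isometric linear embedding $L_2 \hookrightarrow L_p$ realized by i.i.d.\ Gaussian variables. Combining these gives $\alpha_p^*(\DA(\T)) \geq \max\{1/p,1/2\}$.

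For the matching upper bound $\alpha_p^*(\DA(\T)) \leq \max\{1/p,1/2\}$, the plan is to locate an undistorted copy of a lamplighter group inside $\DA(\T)$. For $q \geq 3$, fix a non-trivial involution $\sigma \in \mathfrak{S}_q$ with $\sigma(0)=0$; then for every $n \in \Z$ the element $\alpha^n \delta_o^\sigma \alpha^{-n} = \delta^\sigma_{o.\alpha^{-n}}$ fixes every vertex of the bi-infinite geodesic $\{o.\alpha^m : m \in \Z\}$, because the stabilizer condition on $0$ makes the $0$-child subtree pointwise fixed. These elements therefore commute pairwise by a direct application of (\ref{portrait}), and together with $\alpha$ they generate a subgroup $L \simeq \Z_2 \wr \Z$. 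Fact \ref{record} then forces $o.g^{-1}$ and $\supp g$ to remain on this geodesic for every $g \in L$, so Proposition \ref{metric} identifies the word metric induced on $L$ with the switch-walk-switch metric on $\Z_2 \wr \Z$; hence $L$ is undistorted, and the Naor-Peres equality $\alpha_p^*(\Z_2 \wr \Z) = \max\{1/p,1/2\}$ from \cite{NPLp} delivers the upper bound. The case $q=2$ follows the same plan but requires a slightly more elaborate choice of generators, as no non-trivial element of $\mathfrak{S}_2$ stabilizes $0$.

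The main obstacle is the undistorted embedding of the lamplighter subgroup $L$: although Proposition \ref{metric} makes the word metric transparent, one has to choose the lamp generator carefully so that the automaton rule (\ref{automata}) does not mix the $\alpha$-conjugates with each other or move their inverted orbits off the chosen geodesic---a delicate point especially in the $q=2$ regime.
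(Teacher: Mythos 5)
Your first assertion and your lower bound follow the paper's route (Lemma \ref{biLip} combined with \cite{CSV}, then transfer from $L_1$ to $L_p$), but your upper bound rests on a false input. The claimed equality $\a_p^\ast(\Z_2\wr\Z)=\max\{1/p,1/2\}$ is not a Naor--Peres result and is not true: the lamplighter over $\Z$ with finite lamp group has $L_p$-compression exponent $1$ for every $p$. Indeed its Cayley graph is a Diestel--Leader graph, which embeds bi-Lipschitzly into a product of two trees, and trees have compression exponent $1$ in every $L_p$; alternatively, the speed exponent of $\Z_2\wr\Z$ is $\b^\ast=1/2$, so the Naor--Peres speed bound only gives $\a_p^\ast(\Z_2\wr\Z)\le 2\max\{1/p,1/2\}$, i.e.\ nothing below $1$. (Values of the form $\max\{1/p,1/2\}$ occur for lamplighters whose base walk has speed exponent $1$, e.g.\ over $\Z^2$ or over free groups, not over $\Z$.) Your subgroup $L\simeq\Z_2\wr\Z$ for $q\ge 3$ is correctly constructed and undistorted, and the monotonicity you invoke ($\a_p^\ast(\DA(\T))\le\a_p^\ast(L)$ for an undistorted subgroup) is valid, but it only yields the vacuous bound $\a_p^\ast(\DA(\T))\le 1$. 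The strategy also cannot be repaired by choosing a larger base: every subgroup of $\DA(\T)$ is (locally finite)-by-cyclic, so no copy of $\Z_2\wr\Z^2$ or $\Z_2\wr F_k$ exists inside it. The paper's upper bound comes from a different mechanism: $\DA(\T)$ is amenable and, by non-triviality of the Poisson boundary (Theorem \ref{Poisson=config}), the simple random walk has linear rate of escape, so $\b^\ast(\DA(\T))=1$ and the Naor--Peres inequality $\a_p^\ast\le\max\{1/p,1/2\}/\b^\ast$ from \cite{NPLp} gives exactly the required bound.

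A secondary issue concerns your lower bound: the Mazur map $f\mapsto \mathrm{sgn}(f)|f|^{1/p}$ does not directly give compression $1/p$, because its pointwise lower modulus degenerates at large norms (for fixed $t$, $|M^{1/p}-(M+t)^{1/p}|\to 0$ as $M\to\infty$), and the images $f(x)$ of a bi-Lipschitz embedding of the group have unbounded norm. The correct statement, which the paper simply cites from \cite[p.103]{NPLp}, is $\a_p^\ast(G)\ge\max\{1/p,1/2\}\,\a_1^\ast(G)$; the $1/p$ part comes from the isometric embedding of $\left(L_1,\|x-y\|_1^{1/p}\right)$ into $L_p$ (e.g.\ via indicators of subgraphs), not from the Mazur map, while your Gaussian transfer $L_2\hookrightarrow L_p$ is fine. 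With that substitution your lower-bound paragraph matches the paper; the upper bound, however, needs the speed argument (or some other genuinely new idea), so as it stands the proof has a real gap.
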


The proposition follows from several known results: the upper bound is obtained by the method using escape rate of random walks due to Naor and Peres \cite{NPLp}, and
the lower bound is derived from the corresponding result on wreath products over free groups by Cornulier, Stalder and Valette \cite{CSV}.

\begin{proof}[Proof of Proposition \ref{Lp}]
First we show the upper bound for $\a_p^\ast \left(\DA(\T)\right)$.
For a finitely generated group $G$,
let $\b^\ast(G)$ be the supremum of all those $\b \ge 0$ for which there exits a finite symmetric set of generators $S$, and a constant $c>0$ such that
for every $n \ge 1$,
$$
\E \left[d_G(e, w_n) \right]\ge c n^\b,
$$
where $\{w_n\}_{n=0}^\infty$ is the simple random walk on the Cayley graph of $G$ associated with $S$, starting at the identity $e$.
Naor and Peres show that
if $G$ is amenable, then
\begin{equation}\label{NP}
\a_p^\ast(G) \le \max\left\{\frac{1}{p}, \frac{1}{2}\right\}\frac{1}{\b^\ast (G)},
\end{equation}
see \cite[(5) in Section 1 and Theorem 1.3]{NPLp}.
In $\DA(\T)$, the simple random walk on any Cayley graph has a linear rate of escape since the Poisson boundary is non-trivial (Theorem \ref{Poisson=config}),
and thus we have $\b^\ast (\DA(\T))=1$; then (\ref{NP}) gives the desired upper bound for $\a_p^\ast \left(\DA(\T)\right)$.

Next we show the lower bound for $\a_p^\ast \left(\DA(\T)\right)$.
A result of Cornulier, Stalder and Valette implies that
$\a_1^\ast \left(\mathfrak{S}_q \wr F_{q+1}\right)=1$;
in fact, the group 
admits a bi-Lipschitz embedding into $L_1$ \cite[Proof of Proposition 7.2]{CSV}.
Combining with Lemma \ref{biLip}, we deduce that $\DA(\T)$ admits a bi-Lipschitz embedding into $L_1$,
in particular, $\a_1^\ast(\DA(\T))=1$.
In general, we have
$$
\a_p^\ast(\DA(\T)) \ge \max\left\{\frac{1}{p}, \frac{1}{2}\right\}\a_1^\ast(\DA(\T))
$$
by \cite[p.103]{NPLp}.
This yields the required lower bound for $\a_p^\ast(\DA(\T))$.
\end{proof}

\section{Return probability and isoperimetric profile}\label{return-profile}

For a finitely generated group $\G$ and for a symmetric probability measure $\m$ on the group $\G$,
 the {\it $L_1$-isoperimetric profile} $\L_{1, \G, \m}: [1, \infty) \to \R$ is defined by
$$
\L_{1, \G, \m}(v):=\inf\left\{\frac{1}{2}\sum_{x, y \in \G}|f(x)-f(y)|\m(y) \ : \ |\supp f| \le v, \|f\|_1=1\right\}.
$$
By using the co-area formula, the $L_1$-isoperimetric profile $\L_{1, \G, \m}$ is equivalent to
\begin{equation}\label{expansion}
\L_{1, \G, \m}(v)\simeq\inf\left\{\frac{1}{|U|}\sum_{x, y \in \G}1_U(x)1_{\G \setminus U}(xy)\m(y) \ : \ |U| \le v \right\},
\end{equation}
where given functions $f, g : (0, \infty) \to (0, \infty)$ (where the domain might be restricted on integers),
we write $f \lesssim g$ if there exist constants $C_1, C_2 >0$ such that $f(t) \le C_1 g(C_2 t)$ for all large enough $t$,
and write $f \simeq g$ if we have both $f \lesssim g$ and $f \gtrsim g$.

For a finite symmetric set of generators $S$ of a group $\G$, let $\m_S$ be the uniform measure on $S$.
For another finite symmetric set of generators $H$ of $\G$, we have $\L_{1, \G, \m_S} \simeq \L_{1, \G, \m_H}$.
In fact, one can check that the equivalence class (the {\it asymptotic type}) of $\L_{1, \G, \m_S}$ is a quasi-isometric invariant of the group $\G$.

The $L_1$-isoperimetric profile gives an estimate for the return probability $\m^{(2n)}(e)$, where $\m^{(2n)}$ denotes the $2n$-times convolution power of $\m$.
Again, for any two finite symmetric set of generators $S, H$ of a group $\G$, 
we have $\m_S^{(2n)}(e) \simeq \m_H^{(2n)}(e)$,
and in fact, the asymptotic type of $\m_S^{(2n)}(e)$ is a quasi-isometric invariant of the group $\G$ as shown by Pittet and Saloff-Coste \cite[Theorem 1.2]{PSC}.

\begin{proposition}[Proposition \ref{iso-return}]
Let $\m$ be a finitely supported symmetric probability measure on $\DA(\T)$.
Assume that the support of $\m$ generates the whole group as a semigroup.
Then
\begin{equation}\label{isoperimetric}
\L_{1, \DA(\T), \m}(v) \simeq \frac{1}{\log \log v},
\end{equation}
and
\begin{equation}\label{return}
\m^{(2n)}(e) \simeq e^{-n/(\log n)^2}.
\end{equation}
\end{proposition}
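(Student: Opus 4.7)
My plan is to estimate the $L_1$-isoperimetric profile $\L_1(v) := \L_{1, \DA(\T), \m}(v)$ directly, and then deduce the return probability $\m^{(2n)}(e)$ via the standard Coulhon--Grigoryan--Pittet correspondence between the isoperimetric profile and the heat kernel decay. By the quasi-isometry invariance discussed before the statement, it suffices to work with one convenient finitely supported symmetric measure, such as the uniform law on $S_1$.

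For the upper bound $\L_1(v) \lesssim 1/\log\log v$, I would construct explicit near-F\o lner sets using the decomposition $\DA(\T) \simeq (\Hor(\T) \cap \DA(\T)) \rtimes \Z$. For $r \ge 1$, consider
\[
U_r := \{g \in \DA(\T) : \supp(g) \cup \{o, o.g^{-1}\} \subset B_\T(\o(r), 2r), \ \F(g) \in [0, r]\}.
\]
Then $|U_r| \simeq r \cdot |\mathfrak{S}_q|^{|B_\T(\o(r), 2r)|} = \exp(\Theta(q^{2r}))$, so $\log\log|U_r| \simeq r$. A generator in $S_1$ shifts the inverted orbit by one edge (Fact \ref{record}), modifies the portrait at the two endpoints of that edge, and changes $\F(g)$ by one. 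The fraction of $g \in U_r$ that escapes $U_r$ via the $\Z$-component is $O(1/r)$. Controlling the tree-boundary contribution is more delicate, since tree balls have boundary comparable to their interior; one refines $U_r$ by further constraining $o.g^{-1}$ to sit inside a thickened geodesic segment or by averaging over translates, so that typical inverted orbits stay deep inside $B_\T(\o(r), 2r)$. After such refinement the total boundary expansion is $\lesssim 1/r$, hence $\L_1(|U_r|) \lesssim 1/\log\log|U_r|$.

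For the lower bound $\L_1(v) \gtrsim 1/\log\log v$, I would combine the bi-Lipschitz embedding of Lemma \ref{biLip} with the bi-Lipschitz embedding of $\DA(\T)$ into Sava's lamplighter graph $\mathfrak{S}_q \wr \T$ noted in the remark after that lemma. While bi-Lipschitz embeddings do not in general preserve the $L_1$-profile, monotonicity theorems of Tessera for $\L_1$ under regular maps between amenable metric measure spaces can be invoked on the image of $\DA(\T)$, whose intrinsic structure is amenable even when the ambient target is not. Given $\L_1(v) \simeq 1/\log\log v$, the Coulhon--Grigoryan--Pittet correspondence then yields $\m^{(2n)}(e) \simeq e^{-\gamma(n)}$, where $n \simeq \gamma(n) \L_1(e^{\gamma(n)})^{-2} \simeq \gamma(n)(\log \gamma(n))^2$; solving gives $\gamma(n) \simeq n/(\log n)^2$, hence $\m^{(2n)}(e) \simeq e^{-n/(\log n)^2}$.

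The main obstacle is the lower bound on $\L_1$: the bi-Lipschitz embeddings available target non-amenable ambient spaces ($\mathfrak{S}_q \wr F_{q+1}$ and Sava's lamplighter graph), so direct transfer of isoperimetric estimates is unavailable. One must either apply a monotonicity principle for regular maps carefully in this mixed amenable/non-amenable setting, or construct an intrinsic Nash-type inequality on $\DA(\T)$ exploiting the semidirect product structure and the description of inverted orbits from Section \ref{subsec:wordmetric}.
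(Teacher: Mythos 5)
Your plan proves at most half of the statement, and the half it leaves open is the one the whole estimate hinges on. The decisive gap is the lower bound $\L_{1,\DA(\T),\m}(v)\gtrsim 1/\log\log v$, which is precisely what yields the upper bound $\m^{(2n)}(e)\lesssim e^{-n/(\log n)^2}$ via a Nash-type inequality. The route you sketch cannot be repaired: an $L_1$-profile is not bounded below by embedding into a \emph{larger} space (subsets of non-amenable spaces can have arbitrarily poor isoperimetry), the only targets available here ($\mathfrak{S}_q\wr F_{q+1}$ and Sava's graph) are non-amenable, so any monotonicity statement is vacuous for them, and invoking a monotonicity theorem ``on the image of $\DA(\T)$'' is circular, since the image with its induced metric is bi-Lipschitz to $\DA(\T)$ itself --- there is no comparison space with known profile. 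The paper proves this bound intrinsically: for each $r$ it takes the subgroup $C_r\cong\mathfrak{S}_q^{q^r}$ of portraits supported on the $q^r$ descendants of $o$ at depth $r$, applies Erschler's isoperimetric inequality (as in \cite[Proposition 3.1]{SCZ}) to the kernel $\eta_r$ re-randomizing one coordinate, and uses that each generator $\d_v^\s$ with $v\in L_r$ has word length $O(r)$ in $\DA(\T)$; comparison of Dirichlet forms then gives $\L_1(v)\ge c/r$ for $v\le\exp(c'q^r)$, i.e.\ $\L_1(v)\gtrsim1/\log\log v$. Nothing in your proposal replaces this step.

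The remaining two steps are closer to the paper but also incomplete as written. For the profile upper bound, membership in your $U_r$ couples the portrait, the location of $o.g^{-1}$ and $\F(g)$, and a switch generator can move the modified vertex or the inverted-orbit endpoint out of $B_\T(\o(r),2r)$ for a non-negligible proportion of elements; your proposed repairs (thickened geodesics, averaging over translates) are not carried out. The paper's construction avoids the issue altogether: take all $g=\g\a^i$ with $i$ ranging over an interval of length $r$ and with the portrait of $\g$ \emph{arbitrary} on the whole depth-$r$ descendant subtree of $o$, the interval being chosen so that $o.g^{-1}$ automatically stays in that subtree (Fact \ref{record}); then switch moves never leave the set, $\a^{\pm1}$ does not change the portrait, and only the two extreme values of the $\Z$-coordinate contribute, giving boundary ratio $\lesssim 1/r$ for a set of size $r(q!)^{q^r}$. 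Finally, the return-probability lower bound is not a formal consequence of $\L_1(v)\lesssim1/\log\log v$: the Coulhon--Grigor'yan--Pittet mechanism for on-diagonal lower bounds needs Folner couples (nested almost invariant sets separated by a distance comparable to the inverse boundary ratio), and the naive test-function bound from a set with boundary ratio $1/r$ only gives $\m^{(2n)}(e)\gtrsim|U_r|^{-1}e^{-Cn/r}$, which after optimizing $r$ yields merely $e^{-Cn/\log n}$. The paper instead argues directly: with probability at least $c_1e^{-c_2n/r^2}$ the projection $\F(\check w_k)$ stays in $[-r,r]$ for all $k\le n$, on which event the portrait is confined to $B_\T(o,r)$ and the walk stays in a set of size $2r(q!)^{|B_\T(o,r)|}$; choosing $r\simeq\log n$ gives the matching bound $e^{-Cn/(\log n)^2}$.
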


\begin{proof}
Given $r\in\mathbb{N}$, consider the set $L_{r}$ which
consists of vertices in the horocycle $H_{r}$ that are descendants
of $o$, that is those vertices with labeling $\ldots000.w$, $w\in\{0,\ldots,q-1\}^{r}$.
Let $C_{r}$ be the subgroup of elements of $\mbox{Hor}(\mathbb{T})$
such that the support of the portrait is contained in $L_{r}$,
\[
C_{r}=\{\gamma\in\mbox{Hor}(\mathbb{T}):\ \mbox{supp}\gamma\subseteq L_{r}\}.
\]
Then $C_{r}$ is isomorphic to the product $\mathfrak{S}_{q}^{|L_{r}|}$.
Consider the transition kernel that chooses one coordinate uniformly
from $L_{r}$ and update the permutation to be uniform in $\mathfrak{S}_{q}$,
that is 
\[
\eta_{r}=\frac{1}{|L_{r}|}\sum_{v\in L_{r}}\frac{1}{|\mathfrak{S}_{q}|}\sum_{\s \in\mathfrak{S}_{q}}\delta^{\s}_{v},
\]
where $\delta^{\s}_{v}$ is the indicator function on the element
$\gamma$ with portrait $\s$ at $v$ and trivial everywhere else.
We apply Erschler's isoperimetric inequality as in \cite[Proposition 3.1]{SCZ}, then we
have 
\[
\Lambda_{1,C_{r},\eta_{r}}\left(v\right)\ge\frac{1}{2K}\ \mbox{for all }v\le\frac{1}{K}\left|\mathfrak{S}_{q}\right|^{|L_{r}|/K}
\]
for some absolute constant $K>0$. 
Here for elements in the support
of $\eta_{r}$, the word distance relative to $S_0$ in $\DA(\T)$ is bounded by $2r+1$.  
Therefore by comparison, 
for the uniform measure $\m_{S_0}$ on the set of generators $S_0$ of $\DA(\T)$,
we have 
\[
\Lambda_{1,\DA(\T),\m_{S_0}}(v)\ge\frac{1}{2Cr}\ \mbox{for all }v\le\frac{1}{K}\left|\mathfrak{S}_{q}\right|^{|L_{r}|/K}=\frac{1}{K}(q!)^{q^{r}/K}.
\]
This gives 
\begin{equation}\label{isoperimetry-up}
\Lambda_{1,\DA(\T),\m_{S_0}}(v) \gtrsim 1/\log \log v.
\end{equation}
On the other hand,
for $r \in \N$,
consider the finite subset of $\DA(\T)$,
\[
U:=\left\{ \g \a^i \ : \ 0 \le i \le r, \supp\g \subseteq L'_r \right\},
\]
where $L'_r$ is the set of vertices descendant of $o$ belonging to $\cup_{0\leq s\leq r} H_s$. Then for $\m_{S_0}$,
we have 
\[
\L_{1, \DA(\T), \m_{S_0}}(v) \le \frac{2}{|S_0|r} \ \mbox{for all }v \ge |U|=r(q!)^{q^{r}},
\]
and this yields
$
\Lambda_{1,\DA(\T),\m_{S_0}}(v) \lesssim 1/\log \log v.
$
As we noted, $\Lambda_{1,\DA(\T),\m_{S_0}} \simeq \Lambda_{1,\DA(\T),\m}$
for a general finitely supported probability measure $\m$ that generates the whole group as a semigroup.

The lower bound (\ref{isoperimetry-up}) of $\L_{1, \DA(\T), \m}$ gives an upper bound of the return probability:
\[
\m^{(2n)}(e) \lesssim e^{-n/(\log n)^2},
\]
by using the Nash inequality \cite{C} (see also \cite{SCZ1}); we omit the detail.
For the return probability lower bound, 
we can use a simple argument that on the event that the projection to
$\mathbb{Z}$ is confined in $[-r,r]$, then the portrait is confined
to $B_{\mathbb{T}}(o,r)$. 
There exist constants $c_1, c_2>0$ such that for all large enough $r$ and $n$, one has
\[
\Pr\left(\max_{0 \le k \le n}|\Phi(\check w_k)| \le r \right) \ge c_1e^{-c_2 n/r^2}
\]
\cite[Lemma 1.2]{Alex}.
Then 
\[
\m^{(2n)}(e)\ge\frac{1}{2r(q!)^{\left|B_{\mathbb{T}}(o,r)\right|}}e^{-Cn/r^{2}}.
\]
Optimise choice of $r$ we obtain a matching lower bound. 
\end{proof}

\textbf{Acknowledgements.} J.B.\ is partially supported by project AGIRA ANR-16-CE40-0022.
R.T.\ is supported by JSPS Grant-in-Aid for Young Scientists (B) (No.\ 26800029).

\bibliographystyle{alpha}
\bibliography{DA}

\begin{thebibliography}{JMBMdlS16}

\bibitem[Ab{\'e}05]{A}
M.~Ab{\'e}rt.
\newblock Group laws and free subgroups in topological groups.
\newblock {\em Bulletin of the London Mathematical Society}, 37(04):525--534,
  2005.

\bibitem[Ale92]{Alex}
G.~Alexopoulos.
\newblock A lower estimate for central probabilities on polycyclic groups.
\newblock {\em Can. J. Math.}, 44(5):897--910, 1992.

\bibitem[AV14]{AmirViragDeg3}
G.~Amir and B.~Vir\'ag.
\newblock Positive speed for high-degree automaton groups.
\newblock {\em Groups Geom. Dyn.}, 8(1):23--38, 2014.

\bibitem[Bro04]{BroRenewal}
S.~Brofferio.
\newblock Renewal theory on the affine group of an oriented tree.
\newblock {\em Journal of Theoretical Probability}, 17(4):819--859, 2004.

\bibitem[Bro06]{BroAff}
S.~Brofferio.
\newblock The {P}oisson boundary of random rational affinities.
\newblock {\em Annales de l'institut Fourier}, 56(2):499--515, 2006.

\bibitem[BS11]{BroSch}
S.~Brofferio and B.~Schapira.
\newblock Poisson boundary of {$GL_d(\mathbb{Q})$}.
\newblock {\em Israel Journal of Mathematics}, 185(1):125--140, 2011.

\bibitem[BZ17]{BZH_FD}
J.~Brieussel and T.~Zheng.
\newblock Shalom's property ${H}_{{\rm F}{\rm D}}$ and extensions by
  $\mathbb{Z}$ of locally finite groups.
\newblock arXiv:1706.00707, 2017.

\bibitem[CKW94]{CKW}
D.~I. Cartwright, V.~A. Kaimanovich, and W.~Woess.
\newblock Random walks on the affine group of local fields and of homogeneous
  trees.
\newblock {\em Ann. Inst. Fourier, Grenoble}, 44(4):1243--1288, 1994.

\bibitem[Cou96]{C}
T.~Coulhon.
\newblock Ultracontractivity and {N}ash type inequality.
\newblock {\em Journal of Functional Analysis}, 141:510--539, 1996.

\bibitem[CSV12]{CSV}
Y.~Cornulier, Y.~Stalder, and A.~Valette.
\newblock Proper actions of wreath products and generalizations.
\newblock {\em Transactions of the American Mathematical Society},
  364(6):3159--3184, 2012.

\bibitem[Der75]{Der}
Y.~Derriennic.
\newblock Marche al\'eatoire sur le groupe libre et fronti\`ere de {M}artin.
\newblock {\em Z. Wahrscheinlichkeitstheor. Verw. Geb.}, 32:261--276, 1975.

\bibitem[Ers04]{ErschlerSubexp}
A.~Erschler.
\newblock Boundary behavior for groups of subexponential growth.
\newblock {\em Ann. of Math.}, 160:1183--1210, 2004.

\bibitem[Ers11]{Ers}
A.~Erschler.
\newblock Poisson-{F}urstenberg boundary of random walks on wreath products and
  free metabelian groups.
\newblock {\em Comment. Math. Helv.}, 86:113--143, 2011.

\bibitem[Fur71]{Furst}
H.~Furstenberg.
\newblock Random walks and discrete subgroups of {L}ie groups.
\newblock {\em Advances in probability and related topics}, 1:1--63, 1971.

\bibitem[Fur02]{Fur}
A.~Furman.
\newblock Random walks on groups and random transformations.
\newblock In North-Holland, editor, {\em Handbook of dynamical systems, Vol. 1a
  chap.12 Hasselblatt and Katok}, pages 931--1014, 2002.

\bibitem[GK04]{GK}
E.~Guentner and J.~Kaminker.
\newblock Exactness and uniform embeddability of discrete groups.
\newblock {\em J. London Math. Soc.}, 70(3):703--718, 2004.

\bibitem[GKR77]{GKR}
Y.~Guivarc'h, M.~Keane, and B.~Roynette.
\newblock {\em Marches Al\'eatoires sur les Groupes de Lie, Lecture Notes in
  Math., 624}.
\newblock Springer Berlin-Heidelberg-New York, 1977.

\bibitem[Gri00]{GriNewH}
R.~Grigorchuk.
\newblock Just infinite branch groups.
\newblock In {\em New Horizons in pro-p-Groups}, volume 184, pages 121--179.
  Progress in Mathematics, 2000.

\bibitem[JMBMdlS16]{JMBMdlS}
K.~Juschenko, N.~Matte~Bon, N.~Monod, and M.~de~la Salle.
\newblock Extensive amenability and an application to interval exchanges.
\newblock {\em Ergodic Theory Dynam. Systems}, online, 2016.

\bibitem[JZ16]{JZ}
K.~Juschenko and T.~Zheng.
\newblock Infinitely supported {L}iouville measures of {S}chreier graphs.
\newblock arXiv:1608.03554, 2016.

\bibitem[Kai91]{K}
V.~A. Kaimanovich.
\newblock Poisson boundaries of random walks on discrete solvable groups.
\newblock In {\em Probability measures on groups X (Oberwolfach 1990)}, pages
  205--238, New York, 1991. Penum.

\bibitem[Kai00]{Khyp}
V.~A. Kaimanovich.
\newblock The {P}oisson formula for groups with hyperbolic properties.
\newblock {\em Ann. of Math. (2)}, 152(3):659--692, 2000.

\bibitem[Kai16]{KaiF}
V.~A. Kaimanovich.
\newblock Thompson's group {$F$} is not {L}iouville.
\newblock arXiv:1602.02971, 2016.

\bibitem[KV83]{KV}
V.~A. Kaimanovich and A.~Vershik.
\newblock Random walks on discrete groups : boundary and entropy.
\newblock {\em Annals of Probability}, 11(3):457--490, 1983.

\bibitem[KW07]{KW}
A.~Karlsson and W.~Woess.
\newblock The {P}oisson boundary of lamplighter random walks on trees.
\newblock {\em Geom. Dedicata}, 124:95--107, 2007.

\bibitem[Led85]{Led}
F.~Ledrappier.
\newblock Poisson boundaries of discrete groups of matricies.
\newblock {\em Israel Journal of Mathematics}, 50:319--336, 1985.

\bibitem[LP15]{LP}
R.~Lyons and Y.~Peres.
\newblock Poisson boundaries of lamplighter groups: proof of the
  {K}aimanovich-{V}ershik conjecture.
\newblock {\em arXiv:1508.01845}, 2015.

\bibitem[LP16]{LPbook}
R.~Lyons and Y.~Peres.
\newblock {\em Probability on Trees and Networks}.
\newblock Cambridge University Press, 2016.

\bibitem[Neb88]{Neb}
C.~Nebbia.
\newblock Amenability and {K}unze-{S}tein property for groups acting on a tree.
\newblock {\em Pacific Journal of Mathematics}, 135(2):371--380, 1988.

\bibitem[Nek05]{Nek}
V.~Nekrashevych.
\newblock {\em Self-similar groups}.
\newblock Amer. Math. Soc., 2005.

\bibitem[NP08]{NPemb}
A.~Naor and Y.~Peres.
\newblock Embeddings of discrete groups and the speed of random walks.
\newblock {\em International Mathematics Research Notices}, 2008:rnn076, 2008.

\bibitem[NP11]{NPLp}
A.~Naor and Y.~Peres.
\newblock {$L_p$} compression, traveling salesmen, and stable walks.
\newblock {\em Duke Math. J}, 157(1):53--108, 2011.

\bibitem[PSC00]{PSC}
C.~Pittet and L.~Saloff-Coste.
\newblock On the stability of the behavior of random walks on groups.
\newblock {\em The Journal of Geometric Analysis}, 10(4):713--737, 2000.

\bibitem[Sav10a]{SavaThesis}
E.~Sava.
\newblock {\em Lamplighter Random Walks and Entropy-Sensitivity of Languages.
  Ph.D. thesis}.
\newblock Technische Universit{\"{a}}t Graz, 2010.

\bibitem[Sav10b]{Sava}
E.~Sava.
\newblock A note on the {P}oisson boundary of lamplighter random walks.
\newblock {\em Monatshefte f{\"u}r Mathematik}, 159(4):379--396, 2010.

\bibitem[Sav15]{Sav}
D.~Savchuk.
\newblock Schreier graphs of actions of {T}hompson's group {$F$} on the unit
  interval and on the cantor set.
\newblock {\em Geom. Dedicata}, 175:355--372, 2015.

\bibitem[SCZ15a]{SCZ}
L.~Saloff-Coste and T.~Zheng.
\newblock Isoperimetric profiles and random walks on some permutation wreath
  products.
\newblock {\em arXiv:1510.08830}, 2015.

\bibitem[SCZ15b]{SCZ1}
L.~Saloff-Coste and T.~Zheng.
\newblock Random walks and isoperimetric profiles under moment conditions.
\newblock {\em arXiv:1501.05929}, 2015.

\bibitem[SW90]{SW90}
P.~Soardi and W.~Woess.
\newblock Amenability, unimodularity, and the spectral radius of random walks
  on infinite graphs.
\newblock {\em Mathematische Zeitschrift}, 205:471--486, 1990.

\bibitem[Tio15]{Tiozzo}
G.~Tiozzo.
\newblock Sublinear deviation between geodesics and sample paths.
\newblock {\em Duke Math. J}, 164(3):511--539, 2015.

\bibitem[Woe00]{Woess}
W.~Woess.
\newblock {\em Random walks on infinite graphs and groups}, volume 138 of {\em
  Cambridge Tracts in Mathematics}.
\newblock Cambridge University Press, 2000.

\bibitem[Zuk08]{Zuk}
A.~Zuk.
\newblock Groupes engendr\'es par des automates.
\newblock {\em S\'eminaire Bourbaki, Ast\'erisque}, 311:141--174, 2008.

\end{thebibliography}

\textsc{\newline J\'er\'emie Brieussel --- Universit\'e de Montpellier 
} --- jeremie.brieussel@umontpellier.fr

\textsc{\newline Ryokichi Tanaka --- Tohoku University 
} --- rtanaka@m.tohoku.ac.jp

\textsc{\newline Tianyi Zheng --- UC San Diego
} --- tzheng2@math.ucsd.edu

\end{document}